\newtheorem{theorem}{Theorem}[section]
\newtheorem{definition}[theorem]{Definition}
\newtheorem{proposition}[theorem]{Proposition}
\newtheorem{corollary}[theorem]{Corollary}
\newtheorem{lemma}[theorem]{Lemma}
\newtheorem{remark}[theorem]{Remark}
\newtheorem*{theorem-non}{Theorem}
\declaretheorem[name=Acknowledgements,numbered=no]{ack}
\theoremstyle{definition}
\newcommand{\R}{\mathbb{R}}
\newcommand{\Z}{\mathbb{Z}}
\newcommand{\N}{\mathbb{N}}
\renewcommand{\P}{\mathbb{P}}
\newcommand{\cD}{\mathcal{D}}
\def\phi{\varphi}
\def\R{{\mathbb R}}
\def\N{{\mathbb N}}
\def\Z{{\mathbb Z}}
\def\P{{\mathcal P}}
\def\F{{\mathcal F}}
\def\diam{\mbox{\rm diam} }
\def\le{\leqslant}
\def\ge{\geqslant}
\def\F{\mathcal{F}}
\begin{document}

\title{Escape of mass and entropy for geodesic flows}
\date{\today}

\author[F. Riquelme]{Felipe Riquelme}
\address{IMA, Pontificia Universidad Cat\'olica de Valpara\'iso, Blanco Viel 596, Cerro Bar\'on, Valpara\'iso, Chile.}

\email{friquelme.math@gmail.com}
\urladdr{\url{https://friquelme-math.com/}}

\author[A. Velozo]{Anibal Velozo}  \address{Princeton University, Princeton NJ 08544-1000, USA.}
\email{avelozo@math.princeton.edu}

\begin{abstract}
In this paper we study the ergodic theory of the geodesic flow on negatively curved geometrically finite manifolds. We prove that the measure theoretic entropy is upper semicontinuous when there is no loss of mass. In case we are losing mass, the critical exponents of parabolic subgroups of the fundamental group have a significant meaning. More precisely, the failure of upper-semicontinuity of the entropy is determinated by the maximal parabolic critical exponent. We also study the pressure of positive H\"older continuous potentials going to zero through the cusps. We prove that the pressure map $t\mapsto P(tF)$ is differentiable until it undergoes a phase transition, after which it becomes constant. This description allows, in particular, to compute the entropy at infinity of the geodesic flow.
\end{abstract}

\maketitle

\section{Introduction}
Geodesic flows on negatively curved manifolds are one of the main examples of Anosov flows. As such, they have motivated  a lot of  research in partially hyperbolic dynamical systems during the last century. They have also been used as ground to test conjectures before jumping into a more general framework. From another point of view, they allow us to use ergodic techniques to understand metrics on our ambient manifold. Of course, it does not look like we are simplifying the work, long time behaviour is subtle and not very computable. Fortunately, after the work of many people, we have a reasonably good understanding of the dynamics in many cases (for instance if the manifold is compact or geometrically finite). This paper points mainly in the first direction. We investigate continuity properties of the entropy in noncompact Riemannian manifolds and discuss the phenomena of escape of mass.


This paper has two main goals. The first is to prove the uppersemicontinuity of the measure theoretic entropy in the geometrically finite case. This part of the work is based on \cite{MR3430275} where Einsiendler, Kadyrov and Pohl proved a very similar result in the case of finite covolume lattices in real rank 1 Lie groups. In our context the constants involved have a clear geometrical meaning. Assume $\widetilde{M}$ is a simply connected, pinched negatively curved manifold and $\Gamma$ a discrete, torsion free group of isometries of $\widetilde{M}$. For such  $\Gamma$ we define
$$\overline{\delta}_{\P}=\sup\{\delta_{\mathcal{P}}: \mathcal{P} \ \mbox{parabolic subgroup of} \ \Gamma\},$$
where $\delta_G$ is the critical exponent of a subgroup $G$ of isometries of $\widetilde{M}$. Let $M$ be the quotient Riemannian manifold $\widetilde{M}/\Gamma$. We say that a sequence $(\mu_n)$ of probability measures on $T^1M$ \textit{converges vaguely} to $\mu$ if $\lim_{n\to\infty}\int f d\mu_n= \int fd\mu$ whenever $f$ is a compactly supported function. With this topology, the space of invariant probability measures is not necessarily compact because of the loss of mass phenonema. In section \ref{escape} we prove

\begin{theorem}\label{thm:escape_mass} Let $(M,g)$ be a geometrically finite Riemannian manifold with pinched negative sectional curvature. Assume that the derivatives of the sectional curvature are uniformly bounded. Let $(\mu_n)$ be a sequence of $(g_t)$-invariant probability measures on $T^1M$ converging to $\mu$ in the vague topology. Then
$$\limsup_{n\to\infty} h_{\mu_n}(g) \leq \|\mu\|h_{\frac{\mu}{\|\mu\|}}(g)+(1-\|\mu\|)\overline{\delta}_{\P}.$$
\end{theorem}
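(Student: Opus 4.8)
The plan is to follow the scheme of Einsiedler--Kadyrov--Pohl \cite{MR3430275}: compute the entropy of the time-one map, $h_{\mu_n}(g)=h_{\mu_n}(g_1)$, through partitions of $T^1M$ adapted to the geometrically finite structure, and then pass to the vague limit. Since $M$ is geometrically finite, fix for a large parameter $R$ a decomposition $T^1M=\mathcal K\cup C_1\cup\cdots\cup C_k$, where $\mathcal K$ is the (compact) unit tangent bundle over the $R$-neighbourhood of a fixed compact core of $M$, and $C_1,\dots,C_k$ are the unit tangent bundles over the finitely many cusp neighbourhoods, each identified with $\mathrm{Hor}_i/\mathcal P_i$ for a horoball $\mathrm{Hor}_i$ and the parabolic subgroup $\mathcal P_i$ fixing its centre. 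The goal is an inequality $\limsup_{n\to\infty}h_{\mu_n}(g)\le\|\mu\|\,h_{\mu/\|\mu\|}(g)+(1-\mu(\mathcal K))(\overline{\delta}_{\mathcal P}+\epsilon)$ for every $\epsilon>0$ and every choice of $R$; letting $R\to\infty$, so that $\mu(\mathcal K)\uparrow\|\mu\|$, and then $\epsilon\to0$ gives the theorem.

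The geometric heart is the following counting estimate, proved by lifting to $\widetilde M$: for every $\epsilon>0$ and every cusp there is $C>0$ so that, for all large $\ell$, the number of orbit segments of length $\ell$ staying inside $C_i$, counted up to any fixed scale, is at most $Ce^{(\delta_{\mathcal P_i}+\epsilon)\ell}$. Indeed such a geodesic segment goes up the horoball $\mathrm{Hor}_i$ and comes back --- the Busemann function is concave along geodesics, so each excursion is unimodal --- and, being a path of length $\ell$, meets only $\mathcal P_i$-translates $p\mathcal F_i$ of a fixed fundamental domain with $d(o,po)\le\ell+O(1)$; modulo $\mathcal P_i$, the segment is determined, up to boundedly many choices at fixed scale, by its entry point in the compact boundary cross-section and by the translate $p\in\mathcal P_i$ towards which it travels. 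Since $\#\{p\in\mathcal P_i:\ d(o,po)\le t\}\le e^{(\delta_{\mathcal P_i}+\epsilon)t}$ for $t$ large --- this is exactly the definition of the critical exponent $\delta_{\mathcal P_i}$ --- the bound follows, and summing over the finitely many cusps costs only a bounded factor. The upshot is that the maximal rate at which complexity can accumulate along orbits spending time in the cusps is $\overline{\delta}_{\mathcal P}$.

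With this input the assembly follows \cite{MR3430275}. One first bounds $h_{\mu_n}(g_1)\le h_{\mu_n}(g_1,\mathcal A_m)+\epsilon_m$ for a refining sequence of partitions $\mathcal A_m$ that are finite on $\mathcal K$ and adapted to the $\mathcal P_i$-structure on the cusps, with $\epsilon_m\to0$ uniformly in $n$; then one estimates $h_{\mu_n}(g_1,\mathcal A_m)=\lim_N\tfrac1N H_{\mu_n}\big(\bigvee_{j=0}^{N-1}g_{-j}\mathcal A_m\big)$ by splitting the $\mathcal A_m$-name of a $\mu_n$-typical length-$N$ orbit segment at its entrances to and exits from the cusps. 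Inserting a thick buffer so that each recorded excursion is long forces there to be $o(N)$ of them, so locating them costs $e^{o(N)}$; what remains is a core contribution, from the $\approx\mu_n(\mathcal K)N$ time steps in $\mathcal K$ and governed by the finite partition $\mathcal A_m|_{\mathcal K}$, and an excursion contribution bounded via the counting estimate by $\mu_n(T^1M\setminus\mathcal K)(\overline{\delta}_{\mathcal P}+\epsilon)N+o(N)$. This yields
$$h_{\mu_n}(g_1,\mathcal A_m)\ \le\ \mu_n(\mathcal K)\,h^{\mathcal K}_{n,m}\ +\ \mu_n(T^1M\setminus\mathcal K)\,(\overline{\delta}_{\mathcal P}+\epsilon)\ +\ r_m,$$
where $h^{\mathcal K}_{n,m}$ is the $\mathcal A_m|_{\mathcal K}$-entropy of the first-return map of $g_1$ to $\mathcal K$ for the induced measure, and $r_m\to0$ uniformly in $n$. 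Taking $\limsup_{n\to\infty}$ for fixed $m$: by vague convergence $\mu_n(\mathcal K)\to\mu(\mathcal K)$ and the induced systems converge, so upper semicontinuity of the entropy along the fixed finite partition $\mathcal A_m|_{\mathcal K}$ (arranging $\mu(\partial\mathcal K)=0$ and $\mu$-null atom boundaries), together with Abramov's formula and the affinity and positivity of the entropy, bounds the first term by $\|\mu\|\,h_{\mu/\|\mu\|}(g)$; the second term is at most $(1-\mu(\mathcal K))(\overline{\delta}_{\mathcal P}+\epsilon)$. Letting $m\to\infty$ removes the errors $r_m$ and $\epsilon_m$, and we obtain the inequality of the first paragraph.

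I expect the genuine difficulties to be twofold. First, showing that a partition of the above type recovers the entropy of every $\mu_n$ up to $\epsilon_m\to0$ uniformly: this is where noncompactness bites, since complexity can a priori concentrate at arbitrarily fine scales and arbitrarily large depth in the cusps, and one must argue that the only such complexity is the one already counted by the Poincar\'e series --- precisely where the hypotheses that the curvature is pinched and its derivatives uniformly bounded enter, through Lipschitz and expansiveness control and the geometry of the horoballs. Second, pinning the compact-part contribution to exactly $\|\mu\|\,h_{\mu/\|\mu\|}(g)$, rather than a topological quantity, requires the careful Abramov--Kac and upper-semicontinuity bookkeeping of \cite{MR3430275} (the first-return map is only piecewise continuous, so this needs the right boundary-null conditions), adapted to the variable-curvature setting. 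The cusp counting itself, by contrast, is a clean consequence of the definition of the critical exponent once the horoball geometry is in place.
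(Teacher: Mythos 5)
Your geometric core is the right one and matches the paper's: decompose $T^1M$ into a compact part and finitely many cusp pieces $\mathrm{Hor}_i/\mathcal P_i$, and count orbit segments that excurse into a cusp by the number of $\mathcal P_i$-translates of a fundamental cross-section they can reach in the allotted time, giving a rate $e^{(\delta_{\mathcal P_i}+\epsilon)\ell}$; this is exactly the content of the paper's Proposition \ref{prop:partition_Bowen_Balls} (with the auxiliary Lemma \ref{lem:angularstone} playing the role of your ``entry point + translate'' parametrization, and Lemma \ref{lem:1} controlling the intermediate band between the compact part and the deep cusp so that the bookkeeping closes up). Where you diverge is in the assembly. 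You propose (i) a uniform generator estimate $h_{\mu_n}(g_1)\le h_{\mu_n}(g_1,\mathcal A_m)+\epsilon_m$ with $\epsilon_m\to 0$ uniformly in $n$, and (ii) an Abramov/Kac decomposition via the first-return map to $\mathcal K$, after which you pass to the limit using upper semicontinuity of entropy for the induced system. You rightly flag both as the hard points, and indeed both are problematic here: (i) has no easy noncompact proof, since the flow is not uniformly expansive at arbitrary depth in the cusps; and (ii) requires convergence of the induced (only piecewise continuous) systems under vague convergence, which is delicate since a priori mass can concentrate near the boundary of $\mathcal K$.

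The paper avoids both by replacing the generator/Abramov machinery with Katok's covering characterization of entropy, adapted to the noncompact setting. Lemma \ref{lem:katok_entropy_independence} gives $h_\mu(g)\le\liminf_n\frac1n\log N_\mu(n,r,\delta)$ for a \emph{fixed} $r>0$ (so no limit $r\to0$ is needed), and then Proposition \ref{prop:entropy_bounded_from_above} bounds the covering number of a positive-measure set by the product of (number of $\beta_0^{n-1}$-cells needed, via Shannon--McMillan--Breiman) and (number of $(n,r)$-dynamical balls per cell, via your cusp counting). This yields $h_\mu(g)\le h_\mu(g,\beta)+\mu(X_{>s_0})\overline\delta_{\mathcal P}+\tfrac1{s-s_0}\log C_{r,s_0}$ for a \emph{finite} partition $\beta$, with no claim that the remainder tends to zero under refinement; the $\overline\delta_{\mathcal P}$ term is simply carried to the answer. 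Passing to the vague limit is then elementary: all cells of $\beta_0^{m-1}$ except the one contained entirely in $X_{>s}$ are relatively compact with $\mu$-null boundary, so $\mu_n(Q)\log\mu_n(Q)\to\mu(Q)\log\mu(Q)$ cell by cell, while the single unbounded cell contributes at most $2e^{-1}/m$. No first-return map, no Abramov, and no induced-system limit is needed. If you want to salvage your route, the cleanest fix is to swap your steps (i)--(ii) for the Katok inequality and the finite-partition argument as above; your counting estimate then plugs in unchanged.
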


In particular, if the sequence $(\mu_n)$ does not lose mass, then the classical upper semicontinuity result follows. We remark that we can not just cite works of Yomdin \cite{yo} and Newhouse \cite{n} because of the noncompactness of $T^1M$. The following corollary follows directly from the Theorem above.

\begin{corollary}\label{cor:mass_estimationbelow} Assume the hypothesis of Theorem \ref{thm:escape_mass}. Let $(\mu_n)$ be a sequence of $(g_t)$-invariant probability measures on $T^1M$ converging to $\mu$ in the vague topology. If for large enough $n$ we have $h_{\mu_n}(g)\geq c$, then
$$\|\mu\|\geq \frac{c-\overline{\delta}_{\P}}{\delta_\Gamma-\overline{\delta}_\P}.$$
In particular, if $c>\overline{\delta}_\P$, then any vague limit of $(\mu_n)$ has positive mass.
\end{corollary}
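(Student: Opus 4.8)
The plan is to deduce the corollary directly from Theorem \ref{thm:escape_mass} combined with the classical upper bound on the entropy of an invariant probability measure by the critical exponent. Since $h_{\mu_n}(g)\geq c$ for all sufficiently large $n$, we certainly have $\limsup_{n\to\infty}h_{\mu_n}(g)\geq c$. Feeding this into the conclusion of Theorem \ref{thm:escape_mass} gives
$$c\leq \|\mu\|\,h_{\frac{\mu}{\|\mu\|}}(g)+(1-\|\mu\|)\overline{\delta}_{\P}.$$
If $\|\mu\|=0$ (total escape of mass) one reads this with the convention that the first term vanishes, so that $c\leq\overline{\delta}_{\P}$; in particular the asserted lower bound holds trivially in that case since its right-hand side is then non-positive.

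Assume now $\|\mu\|>0$, so that the normalized limit $\mu/\|\mu\|$ is a $(g_t)$-invariant probability measure on $T^1M$. Here I would invoke the fact that for the geodesic flow on a pinched negatively curved manifold every invariant probability measure has entropy at most the critical exponent, $h_{\nu}(g)\leq\delta_\Gamma$; this is the variational principle of Otal--Peign\'e (and can also be obtained from a Ruelle-type inequality, which is where the bounded derivatives of the curvature enter). Applying it to $\mu/\|\mu\|$ yields
$$c\leq \|\mu\|\,\delta_\Gamma+(1-\|\mu\|)\overline{\delta}_{\P}=\overline{\delta}_{\P}+\|\mu\|\,(\delta_\Gamma-\overline{\delta}_{\P}).$$
Because $M$ is geometrically finite we have the strict inequality $\overline{\delta}_{\P}<\delta_\Gamma$ (Dal'bo--Otal--Peign\'e), so dividing by $\delta_\Gamma-\overline{\delta}_{\P}>0$ gives exactly
$$\|\mu\|\geq\frac{c-\overline{\delta}_{\P}}{\delta_\Gamma-\overline{\delta}_{\P}}.$$
The final assertion is then immediate: if $c>\overline{\delta}_{\P}$ the right-hand side is strictly positive, so $\|\mu\|>0$ and no vague limit of $(\mu_n)$ can be the zero measure.

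The only input beyond Theorem \ref{thm:escape_mass} that is not pure arithmetic is the uniform entropy bound $h_\nu(g)\leq\delta_\Gamma$ together with $\overline{\delta}_{\P}<\delta_\Gamma$; this is the one place where the noncompactness of $T^1M$ could in principle cause trouble, but both facts are available in the literature for geometrically finite (indeed all pinched negatively curved) manifolds, so I do not expect any real obstacle. Everything else is an elementary rearrangement of the inequality furnished by the theorem.
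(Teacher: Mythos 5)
Your argument is correct and is exactly the intended one: the paper states the corollary as an immediate consequence of Theorem \ref{thm:escape_mass} without writing out a proof, and the two additional inputs you supply --- the bound $h_\nu(g)\leq\delta_\Gamma$ for every invariant probability measure $\nu$ (the Otal--Peign\'e variational principle, Theorem \ref{thm:pps} with $F=0$), together with positivity of $\delta_\Gamma-\overline{\delta}_\P$ --- are what turn the theorem's inequality into the stated lower bound on $\|\mu\|$. Your treatment of the $\|\mu\|=0$ case is also the right reading of the theorem's conclusion.

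One attribution in your write-up is inaccurate and worth correcting. You assert that geometric finiteness alone forces $\overline{\delta}_\P<\delta_\Gamma$ and cite Dal'bo--Otal--Peign\'e. What they prove (recalled here as the GAP criterion, Theorem \ref{thm:gap_criterion}) is that $\delta_\P<\delta_\Gamma$ whenever the parabolic subgroup $\P$ is of \emph{divergence type}; in variable pinched negative curvature there do exist geometrically finite manifolds for which some parabolic is convergent and $\overline{\delta}_\P=\delta_\Gamma$, and the paper itself explicitly allows for this possibility in Section \ref{final}. In that degenerate case the corollary's formula has a zero denominator and, since $c\leq\delta_\Gamma=\overline{\delta}_\P$ always holds, its content is vacuous; so the statement should be read as carrying the implicit standing assumption $\overline{\delta}_\P<\delta_\Gamma$. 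Under that reading your proof is complete.
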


In \cite{IRV} the authors defined the entropy at infinity $h_\infty(g)$ of the geodesic flow as
$$h_\infty(g)=\sup_{\mu_n \rightharpoonup 0} \limsup_{n\to\infty} h_{\mu_n}(g).$$
This number is $h_{top}(g)/2$ on the cases treated in \cite{MR3430275}, in particular for the geodesic flow on noncompact finite volume hyperbolic surfaces. It was also checked to be $\overline{\delta}_\P$ in the case of extended Schottky manifolds by the use of symbolic dynamics in \cite{IRV}. We can now cover all the geometrically finite cases. Observe that under the hypothesis of Theorem \ref{thm:escape_mass} we get
$$h_\infty(g)\le \overline{\delta}_\P.$$
In fact, we have the equality $h_\infty(g)= \overline{\delta}_\P$ as consequence of the following result.
\begin{theorem}\label{thm:existence_sequence} Assume the hypothesis of Theorem \ref{thm:escape_mass}. Then there exists a sequence $(\mu_n)$ of ergodic $(g_t)$-invariant probability measures on $T^1M$ converging vaguely to 0 and such that
$$\lim_{n\to\infty} h_{\mu_n}(g) = \overline{\delta}_{\P}.$$
\end{theorem}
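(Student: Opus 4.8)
The plan is to construct the sequence $(\mu_n)$ by concentrating invariant measures on longer and longer geodesic segments that wind deep into a cusp whose parabolic subgroup $\mathcal{P}$ achieves (or nearly achieves) the supremum $\overline{\delta}_{\mathcal{P}}$. Fix a parabolic subgroup $\mathcal{P}$ with $\delta_{\mathcal{P}}$ close to $\overline{\delta}_{\mathcal{P}}$; after the argument we let $\delta_{\mathcal{P}} \to \overline{\delta}_{\mathcal{P}}$ by a diagonal extraction. The cusp associated to $\mathcal{P}$ is modeled, up to bounded distortion, on a quotient of a horoball by $\mathcal{P}$, and the geodesic flow on vectors that stay in a fixed deep horoball for a long time is well approximated by the dynamics generated by $\mathcal{P}$ acting on its limit set. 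The critical exponent $\delta_{\mathcal{P}}$ is precisely the exponential growth rate of $\mathcal{P}$-orbits, so one can build a subsystem (a suspension over a subshift coming from a finite subset of generators of $\mathcal{P}$, or a family of closed geodesics staying in the horoball up to height $\to\infty$) whose topological entropy approaches $\delta_{\mathcal{P}}$.

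**Next I would** produce the measures themselves. One clean route: use the variational principle on an exhausting sequence of compact invariant (or nearly invariant) subsets of the cusp's "deep part". Concretely, let $\mathcal{C}_R$ be the set of unit vectors whose geodesic remains in the horoball of depth $R$ for all time in a large window; as $R \to \infty$ the relevant transfer-operator / pressure computation shows the entropy of the optimal measure supported near $\mathcal{C}_R$ tends to $\delta_{\mathcal{P}}$, while — and this is the point — any such measure must escape to infinity in the vague topology, since it gives mass $1-o(1)$ to vectors arbitrarily deep in the cusp. Ergodicity can be arranged either by passing to an ergodic component of large entropy (using the ergodic decomposition and affinity of entropy) or by working directly with measures of maximal entropy on topologically transitive subshifts, which are ergodic. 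Then a diagonal argument over $R \to \infty$ and over parabolic subgroups with $\delta_{\mathcal{P}} \to \overline{\delta}_{\mathcal{P}}$ yields a single sequence $(\mu_n)$ with $\mu_n \rightharpoonup 0$ and $h_{\mu_n}(g) \to \overline{\delta}_{\mathcal{P}}$.

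**The upper bound** $\limsup h_{\mu_n}(g) \le \overline{\delta}_{\mathcal{P}}$ for any vaguely-null sequence is already furnished by Theorem \ref{thm:escape_mass} (take $\|\mu\| = 0$), so what genuinely needs proof here is only the lower bound: that entropy $\overline{\delta}_{\mathcal{P}}$ is actually attainable in the limit. The main obstacle, I expect, is making the approximation "geodesic flow restricted to a deep horoball $\approx$ the $\mathcal{P}$-action" quantitatively precise enough to transfer the entropy estimate: one must control the error in the horoball model coming from the pinched (non-constant) curvature and from the bounded derivative hypothesis, show that geodesics can be made to spend an arbitrarily large proportion of time deep in the cusp while still carrying entropy rate $\delta_{\mathcal{P}} - \epsilon$, and verify that the resulting invariant probability measures genuinely converge vaguely to $0$ rather than retaining mass on the compact core. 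A convenient technical device is to use the existence of a well-behaved symbolic coding for the geodesic flow near a cusp (as in the extended Schottky picture of \cite{IRV}), or alternatively to invoke the thermodynamic formalism for the potential $-\delta_{\mathcal{P}} \cdot (\text{time in cusp})$ and take a Gibbs/equilibrium state; either way the heart of the matter is quantitative shadowing in the cusp.
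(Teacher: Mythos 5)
Your proposed route is genuinely different from the paper's, and I think it faces substantial obstacles that the paper's proof cleverly avoids. You want a direct, geometric construction: build a ``subsystem'' living deep in a cusp with entropy $\approx \delta_{\mathcal{P}}$, push invariant measures onto it, and let it recede to infinity. The trouble is that the deep part of a cusp carries essentially no invariant dynamics: a geodesic that stays forever inside a fixed horoball neighbourhood must converge to the parabolic fixed point, so your candidate sets $\mathcal{C}_R$ of vectors ``remaining in the horoball for all time in a large window'' are not invariant, are measure-null for any invariant measure, and one cannot directly apply the variational principle on them. Turning ``orbit segments that spend a large proportion of time deep in the cusp'' into honest invariant measures with entropy close to $\delta_{\mathcal{P}}$ is precisely the kind of quantitative shadowing/coding problem you flag as the main obstacle, and without a symbolic model (as in the extended Schottky setting of \cite{IRV}) it is not clear how to carry this out in the general geometrically finite, variable-curvature case.

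The paper instead obtains the sequence indirectly through pressure asymptotics, leaning on the machinery already developed. Fix any $F\in\mathcal{F}$ (positive, H\"older, decaying through the cusps). Part (1) of Theorem~\ref{thm:phase_transitions} gives $P(tF)\ge\overline{\delta}_{\mathcal{P}}$ for all $t$ (a Poincar\'e-series computation for the parabolic subgroup), and part (2), which uses Corollary~\ref{cor:leq_delta_P} (i.e.\ Theorem~\ref{thm:escape_mass} with $\|\mu\|=0$), shows $\lim_{t\to-\infty}P(tF)=\overline{\delta}_{\mathcal{P}}$. Taking near-equilibrium measures $\mu_n$ for $P(-nF)$, the lower pressure bound forces $h_{\mu_n}(g)\ge\overline{\delta}_{\mathcal{P}}-\varepsilon$ because $\int F\,d\mu_n\ge0$; boundedness of entropy forces $\int F\,d\mu_n\to0$; positivity of $F$ and Lemma~\ref{lem:convergence_to_zero} then force $\mu_n\rightharpoonup 0$; and the already-proved upper bound closes the squeeze. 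Ergodicity is obtained by replacing each $\mu_n$ with an ergodic component realizing at least the same value of $h_\mu(g)-n\int F\,d\mu$ (affinity of entropy plus ergodic decomposition), or, in the case that the maximizing $\mathcal{P}$ is of divergence type, by taking the Gibbs measures directly via the no-phase-transition example. So the heavy lifting you anticipate in the cusp is entirely bypassed: the lower bound on entropy comes from the pressure inequality $P(tF)\ge\overline{\delta}_{\mathcal{P}}$, not from any direct entropy construction inside the cusp. Your sketch as written has a gap at exactly the point you identified; the paper's workaround is to turn the problem into thermodynamic formalism for a well-chosen family of potentials, where the parabolic critical exponent enters only through a soft Poincar\'e-series bound.
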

It worth mentioning that only when $h_{\infty}(g)<h_{top}(g)$ the result above becomes non trivial. In fact the authors do not known of any other case when a statement like that is true. It is shown in Section \ref{infinity} that this is not a general feature, having a big group of isometries prevent to have such a gap.

The second goal of the paper is to study the pressure map $t\mapsto P(tF)$ for H\"older continuous potentials going to zero through the cusps of the manifold. We get a fairly complete description of this map. Define $\mathcal{F}$ as the space of positive H\"older continuous potentials converging to zero through the cusps (see Definition \ref{def-pot}). For $F\in\mathcal{F}$ we have the following result.
\begin{theorem} \label{1} Let $M=\widetilde{M}/\Gamma$ be a geometrically finite Riemannian manifold with pinched negative sectional curvature. Assume that the derivatives of the sectional curvature are uniformly bounded. Then every potential $F\in\mathcal{F}$ verifies
\begin{enumerate}
\item[(1)] for every $t \in \R$, we have $P(t F) \geq \overline{\delta}_\P$
\item[(2)] the function $t\mapsto P(tF)$ has a horizontal asymptote at $-\infty$, and it verifies
$$ \lim_{t \to -\infty} P(tF)= \overline{\delta}_\P.$$
\end{enumerate}
Additionally, if $t':= \sup\left\{ t \leq 0 : P(tF)= \overline{\delta}_\P \right\}$, then
\begin{enumerate}
\item[(3)] for every $t>t'$ the potential $tF$ has a unique equilibrium measure, and
\item[(4)] the pressure function $t\mapsto P(tF)$ is differentiable in $(t',\infty)$, and it verifies
\begin{equation*}
P(tF)=
\begin{cases}
\overline{\delta}_\P & \text{ if } t < t'\\
\text{strictly increasing}  & \text{ if } t > t',
\end{cases}
\end{equation*}
\item[(5)] If $t<t'$ then the potential $tF$ has not equilibrium measure.
\end{enumerate}
\end{theorem}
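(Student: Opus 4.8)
The plan is to build on the escape-of-mass estimate of Theorem \ref{thm:escape_mass} together with the variational principle for the pressure on this noncompact phase space. For (1): since $F$ is positive, $P(tF)\ge P(0)=\htop(g)\ge\overline\delta_\P$ when $t\ge 0$ trivially; for $t<0$ one takes the sequence $(\mu_n)$ of ergodic measures from Theorem \ref{thm:existence_sequence} with $h_{\mu_n}(g)\to\overline\delta_\P$ and $\mu_n\rightharpoonup 0$, and notes that $\int F\,d\mu_n\to 0$ because $F$ vanishes through the cusps and the mass escapes into the cusps; hence $P(tF)\ge\limsup_n\big(h_{\mu_n}(g)+t\int F\,d\mu_n\big)=\overline\delta_\P$. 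For (2): the upper bound comes from Theorem \ref{thm:escape_mass}. Given any invariant probability $\mu$, write $c=\|\mu\|$; then $h_\mu(g)+t\int F\,d\mu\le c\,h_{\mu/c}(g)+(1-c)\overline\delta_\P+t\,c\int F\,d(\mu/c)\le c\,\htop(g)+(1-c)\overline\delta_\P + t c\cdot(\text{something}\ge 0)$. As $t\to-\infty$ the term $tc\int F\,d(\mu/c)$ drives the supremum toward configurations with $c\to 0$, forcing $\limsup_{t\to-\infty}P(tF)\le\overline\delta_\P$; combined with (1) this gives the limit. Making this rigorous requires a compactness/tightness argument showing that for a sequence of near-maximizing measures $\mu_t$ at parameter $t\to-\infty$, the mass $\|\mu_t\|\to 0$, which follows since on the compact part $\int F$ is bounded below by a positive constant.

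For (3) and the existence half of (4)-(5): here the key input is the thermodynamic formalism for the geodesic flow on geometrically finite manifolds developed via the symbolic coding / countable Markov shift machinery (Dal'bo--Otal--Peigné, Paulin--Pollicott--Schapira, and the inducing scheme of \cite{IRV}). One should argue that for $t>t'$ the potential $tF$ is "positive recurrent" in the sense that $P(tF)>\overline\delta_\P$ strictly, which is exactly the condition separating the pressure from the contribution of the cusps; positivity of $P(tF)-\overline\delta_\P$ is what yields existence of an equilibrium state and uniqueness (via the Ruelle--Perron--Frobenius theorem on the associated recurrent countable Markov shift, or via the Gibbs property of \cite{PPS}). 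For $t<t'$ one has $P(tF)=\overline\delta_\P$, and any hypothetical equilibrium measure $\mu$ would satisfy $h_\mu(g)+t\int F\,d\mu=\overline\delta_\P$; since $\int F\,d\mu\ge 0$ this forces $h_\mu(g)\ge\overline\delta_\P$, but then by Corollary \ref{cor:mass_estimationbelow} (applied to the constant sequence) the measure cannot be escaping, so $h_\mu(g)\le\htop(g)$ and more precisely $\mu$ lives on the "radial" part; a strict inequality $h_\mu(g)+t\int F\,d\mu<\overline\delta_\P$ for all such genuine measures then gives the nonexistence in (5). This requires knowing that no invariant probability measure $\mu$ achieves $h_\mu(g)=\overline\delta_\P$ unless $\overline\delta_\P=\htop(g)$ — i.e., that the entropy at infinity is not attained — which should follow from the strict concavity/analyticity of the pressure of the compact-core dynamics.

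For differentiability and strict monotonicity in (4) on $(t',\infty)$: once uniqueness of the equilibrium state $\mu_{tF}$ is established, differentiability of $t\mapsto P(tF)$ follows from the standard fact that $P$ is convex, and a convex function with a unique tangent functional (here $\int F\,d\mu_{tF}$) at each point is differentiable, with $\frac{d}{dt}P(tF)=\int F\,d\mu_{tF}>0$ since $F>0$ and $\mu_{tF}$ charges the compact core; strict monotonicity is immediate from positivity of this derivative. The constancy on $(-\infty,t')$ is (2) together with the definition of $t'$ and convexity (a convex function equal to its minimum value $\overline\delta_\P$ on a set accumulating at $t'$ from the left must be constant there). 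The main obstacle I anticipate is \textbf{establishing uniqueness of the equilibrium state and the sharp dichotomy at $t'$} — i.e., proving that $P(tF)>\overline\delta_\P$ is both necessary and sufficient for the existence of an equilibrium measure. This is the genuinely hard analytic core: it demands the full strength of the thermodynamic formalism on noncompact negatively curved manifolds (Gibbs measures à la \cite{PPS}, or the recurrence theory of countable Markov shifts à la Sarig applied to a suitable coding), and care is needed because the escape of mass means the usual compactness arguments for existence of equilibrium states fail exactly at and below $t'$.
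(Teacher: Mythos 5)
Your proposal has the right intuition for item (2) (the horizontal asymptote), but there are genuine problems in (1), (3)--(4), and (5).

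\textbf{Item (1), $t<0$: circularity.} You invoke the sequence $(\mu_n)$ from Theorem \ref{thm:existence_sequence} to prove $P(tF)\ge\overline\delta_\P$ for $t<0$. But the paper's proof of Theorem \ref{thm:existence_sequence} itself relies on item (2) of the present theorem (which in turn relies on item (1)): without the asymptote $\lim_{t\to-\infty}P(tF)=\overline\delta_\P$ one does not know the $\mu_n$ have entropy converging to $\overline\delta_\P$. Using that theorem here is circular. The paper instead proves (1) for $t\le 0$ by a direct Poincar\'e-series estimate: for a parabolic $\P$ with $\delta_\P=\overline\delta_\P$ and any $\varepsilon>0$, one chooses $x_0$ deep enough in the cusp that $\int_{x_0}^{px_0}F\le\varepsilon\, d(x_0,px_0)$ for all $p\in\P$, whence $\delta_\P^{-tF}\ge\delta_\P-t\varepsilon$ for all $\varepsilon>0$, so $\delta_\P^{-tF}\ge\overline\delta_\P$, and then $P(-tF)=\delta_\Gamma^{-tF}\ge\delta_\P^{-tF}$. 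This is self-contained and what makes the rest of the argument non-circular.

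\textbf{Items (3)--(4): a different and heavier route.} You propose to route existence/uniqueness of equilibrium states and differentiability through a countable Markov shift coding, recurrence \`a la Sarig, and RPF. The paper does not do this and does not need to: existence and uniqueness for $t>t'$ follow from the escape-of-mass estimate itself. Indeed, Theorem \ref{existence_potentials} (an immediate consequence of Theorem \ref{thm:escape_mass} and Lemma \ref{lem:wconvergence_F}) says that $\limsup_n P(F,\mu_n)\le\|\mu\|P(F,\mu/\|\mu\|)+(1-\|\mu\|)\overline\delta_\P$ along any vaguely convergent sequence; Corollary \ref{cor} then says $P(tF)>\overline\delta_\P$ implies a unique equilibrium (take a maximizing sequence, pass to a vague limit, and use the gap to exclude total escape; uniqueness then follows from the Paulin--Pollicott--Schapira result, Theorem \ref{thm:pps}). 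Differentiability (Proposition \ref{prop:diff_equilibrium}) is then proved by a direct argument with the convexity inequalities \eqref{eq:prop_1}--\eqref{eq:prop_2}, passing to vague limits of $\mu_{t_n}$ and again using Theorem \ref{thm:escape_mass} to identify the limit with $\mu_{t_0}$. Your ``unique tangent functional'' argument needs exactly this kind of control to work: in a noncompact phase space, convexity plus uniqueness of equilibrium states at each $t$ does not by itself give differentiability unless you know that the equilibrium states depend continuously on $t$ in a suitable topology, and that is precisely what the escape-of-mass theorem provides. Your symbolic-dynamics route could in principle work (it does in some specific cases, cf.\ \cite{IRV}) but it is not available at the level of generality of the theorem (variable pinched curvature, general geometrically finite $\Gamma$), and you do not actually carry it out.

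\textbf{Item (5): the argument is wrong.} You start from the equality $h_\mu(g)+t\int F\,d\mu=\overline\delta_\P$ and then claim ``a strict inequality $h_\mu(g)+t\int F\,d\mu<\overline\delta_\P$ \dots gives the nonexistence.'' You cannot derive a strict inequality from the very equality you assumed. The auxiliary claim you lean on --- that no invariant probability measure attains $h_\mu(g)=\overline\delta_\P$ unless $\overline\delta_\P=h_{top}(g)$ --- is also false in general and is not needed. The paper's argument is a clean perturbation in $t$: if $\mu$ were an equilibrium state for $tF$ with $t<t'$, pick $t''\in(t,t')$; then, since $F>0$ forces $\int F\,d\mu>0$,
\[
P(tF)=h_\mu(g)+t\int F\,d\mu < h_\mu(g)+t''\int F\,d\mu \le P(t''F)=\overline\delta_\P=P(tF),
\]
a contradiction. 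No recourse to escape-of-mass or non-attainment of $\overline\delta_\P$ is required.

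\textbf{Item (2).} This part is essentially the paper's argument, though the wording conflates the mass of the maximizing probability measures (always $1$) with the mass of their vague limit. The clean version: convexity plus (1) shows the limit $A:=\lim_{t\to-\infty}P(tF)$ exists and $A\ge\overline\delta_\P$; a near-maximizing sequence forces $\int F\,d\mu_n\to 0$, whence $\mu_n\rightharpoonup 0$ by Lemma \ref{lem:convergence_to_zero}; then Corollary \ref{cor:leq_delta_P} gives $A\le\overline\delta_\P$.
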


The paper is organized as follows. In Section \ref{sec_1} we recall some facts about measure theoretical entropy and the ergodic theory of geodesic flows on negatively curved manifolds. In Section \ref{tech} we prove some technical results used in later sections. In Section \ref{escape} we prove Theorem \ref{thm:escape_mass}. In Section \ref{appli} we study the pressure of potentials in the family $\mathcal{F}$ and prove Theorem \ref{1}. In Section \ref{infinity} we discuss a case when $h_\infty(g)=h_{top}(g)$. Finally, in Section \ref{final} we discuss some final remarks and reprove a Theorem of Dal'bo, Otal and Peign\'e.

\begin{ack}
 We thanks G. Iommi for the stimulating collaboration \cite{IRV}, which is the starting point of this project. We also thanks B. Schapira for very useful discussions at the very beginning of this work. Finally, the second author would like to thanks to his advisor G. Tian for his constant support and encouragements.
\end{ack}

\section{Preliminaries}\label{sec_1}
In this section we recall information about entropy and geodesic flow on negatively curved manifolds. We also introduce notation used in following sections.

\subsection{Entropy}

We will be working with two different types of entropy, the measure theoretic entropy and the topological entropy. Each of them collect information about the complexity of the dynamical system (in an apparently different way, but not completely unrelated as we will see below). Indeed, for measure-preserving transformations on measure spaces we can define the notion of \emph{measure entropy} to quantify chaos. On the other hand, for continuous transformations on topological spaces we use the notion of \emph{topological entropy}. In this subsection we recall the standard definitions and state classical relations between them.\\

Let $(X,\mathcal{B},\mu)$ be a standard probability space. A countable (resp. finite) partition $\mathcal{P}$ of $X$ is a countable (resp. finite) collection $\{P_\alpha\in\mathcal{B} : \alpha\in\mathcal{A}\}$ indexed by $\mathcal{A}$ of countable (resp. finite) cardinality, such that $P_\alpha\cap P_\beta=\emptyset$ for every $\alpha\neq \beta$, and $\mu\left(\bigcup_{\alpha\in\mathcal{A}} P_\alpha\right)=1$. The entropy of $\mathcal{P}$ is the non-negative real number defined as
$$H_\mu(\mathcal{P})=-\sum_{\alpha\in\mathcal{A}} \mu(P_\alpha)\log\mu(P_\alpha).$$
Let $\mathcal{P}$ and $\mathcal{Q}$ be two countable (or finite) partitions of $X$. The partition $\mathcal{P}\vee\mathcal{Q}$ is the countable (or finite) partition defined as
$$\mathcal{P}\vee\mathcal{Q}=\{P\cap Q : P\in\mathcal{P} \quad\mbox{and}\quad Q\in\mathcal{Q}\}.$$
Observe that entropy has the following subadditive property: for $\mathcal{P}$ and $\mathcal{Q}$ countable (or finite) partitions, we always have
$$H_\mu(\mathcal{P}\vee\mathcal{Q})\leq H_\mu(\mathcal{P})+H_\mu(\mathcal{Q}).$$
Consider now a measurable transformation $T:X\to X$ preserving the measure $\mu$. Note that if $\mathcal{P}$ is a countable (resp. finite) partition of $X$, then for every $i\in\N$ the partition $T^{-i}\mathcal{P}$ is also a countable (resp. finite) partition of $X$. Denote by $\mathcal{P}^n$ the partition $\bigvee_{i=0}^{n-1}T^{-i}\mathcal{P}=\mathcal{P}\vee T^{-1}\mathcal{P}\vee...\vee T^{-n+1}\mathcal{P}$. The entropy of $T$ with respect to $\mathcal{P}$ is defined by
$$h_\mu(T,\mathcal{P})=\lim_{n\to\infty}\frac{1}{n}H_\mu(\mathcal{P}^n).$$
Note that the limit in the definition above always exists because the sequence $(H_\mu(\mathcal{P}^n))_n$ is subadditive, that is $H_\mu(\mathcal{P}^{m+n})\leq H_\mu(\mathcal{P}^m)+H_\mu(\mathcal{P}^n)$ for all $m,n\in\N$. Here we are using the fact that $T$ preserves $\mu$. The entropy of $T$ with respect to $\mu$, also called measure theoretical entropy of $T$, is then defined as
$$h_\mu(T)=\sup_{\mathcal{P}} h_\mu(T,\mathcal{P}),$$
where the supremum is taken over all countable (or finite) partitions $\mathcal{P}$ of $X$.\\

Suppose now that $X$ is a locally compact metrizable topological space and $T:X\to X$ is a continuous transformation. Fix a distance $d$ on $X$. For every $n\geq 1$, $r>0$, define the $(n,r)$-dynamical ball centered at $x\in X$ with respect to $d$ as
$$B^d_n(x,r)=\{y\in X : d(T^ix,T^i y)<r, \quad\mbox{for every}\quad 0\leq i \leq n-1\}.$$
It will be convenient to consider the metrics $d_n$ defined by $$d_n(x,y)=\max_{k\in\{0,...,n\}}\{d(T^kx,T^ky)\}.$$ The $(n,r)$-dynamical ball is just the $r$-ball with respect to the metric $d_n$. Since $T$ is continuous, every $(n,r)$-dynamical ball is an open subset of $X$. In particular, every compact set $K\subset X$ admits a finite $(n,r)$-covering, that is a finite covering by $(n,r)$-dynamical balls. Denote by $N^d(n,r,K)$ the minimal cardinality of a $(n,r)$-covering of $K$. The topological entropy $h^d(T)$ of $T$ with respect to the distance $d$ is defined as
$$h^d(T)=\sup_{K}\lim_{r\to 0}\limsup_{n\to\infty} \frac{1}{n}\log N^d(n,r,K),$$
where the supremum is taken over all compact subsets of $X$. Finally define the topological entropy of $T$ as
$$h_{top}(T)=\inf_d h^d(T),$$
where the infimum is taken over all the distances $d$ on $X$ generating the topology of $X$. Note that when $X$ is compact, all distances on $X$ are equivalent. In particular, the topological entropy $h_{top}(T)$ coincides with the entropy $h^d(T)$ for every distance $d$ compatible with the topology of $X$. We remark that instead of using the minimal number of a $(n,r)$-covering of $K$ we could have used the maximum number of $(n,r)$-separated points in $K$, that is points that are $r$-separated with respect to the metric $d_n$. By standard arguments both definitions coincide.\\

The measure theoretic entropy and topological entropy are related by the variational principle (see for instance \cite{Din} for the compact case and \cite{MR1348316} for the general one).

\begin{theorem}[Variational Principle]\label{thm:variational_principle} Let $X$ be a locally compact metrizable topological space. Then
$$h_{top}(T)=\sup_\mu h_\mu(T),$$
where the supremum is taken over all $T$-invariant probability measures on $X$.
\end{theorem}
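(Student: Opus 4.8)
The plan is to establish the two inequalities of the variational principle separately, following the classical scheme of Goodwyn for $\sup_\mu h_\mu(T)\le h_{top}(T)$ and of Misiurewicz for the reverse inequality, and to insert the modifications forced by the fact that $X$ need not be compact (the statement in this generality is the one proved in \cite{MR1348316}).

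For $\sup_\mu h_\mu(T)\le h_{top}(T)$ I would fix a $T$-invariant probability measure $\mu$ and a compatible distance $d$ and prove $h_\mu(T)\le h^d(T)$; taking the infimum over $d$ then gives the bound. Since the entropy of $\mu$ is the supremum of $h_\mu(T,\mathcal P)$ over finite partitions $\mathcal P$, I would fix such a $\mathcal P=\{P_1,\dots,P_k\}$, use the inner regularity of $\mu$ by compact sets to choose compacta $C_i\subset P_i$ with $\mu(P_i\setminus C_i)$ small, and, the $C_i$ being disjoint, pass to a partition $\mathcal Q=\{Q_0,Q_1,\dots,Q_k\}$ with $C_i\subset Q_i$, with $\overline{Q_1},\dots,\overline{Q_k}$ pairwise disjoint, and with $\mu(Q_0)$ small. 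The conclusion then comes from two standard estimates: $H_\mu(\mathcal P\mid\mathcal Q)$ is small, so $h_\mu(T,\mathcal P)$ is close to $h_\mu(T,\mathcal Q)$; and, because the $\overline{Q_i}$ with $i\ge 1$ are $\delta$-separated for some $\delta>0$, any set of $d$-diameter $<\delta$ meets at most two cells of $\mathcal Q$, so for a compact $K$ with $\mu(X\setminus K)$ small the cells of $\mathcal Q^n$ meeting $K$ are controlled by an $(n,\delta)$-covering number of $K$, leading — after the usual tightening — to $h_\mu(T,\mathcal Q)\le h^d(T)$ up to errors vanishing as $\mu(X\setminus K)\to 0$.

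For $h_{top}(T)\le\sup_\mu h_\mu(T)$, write $h^{\ast}=\sup_\mu h_\mu(T)$, fix a compatible $d$, a compact $K$, and $r>0$, and for each $n$ take a maximal $(n,r)$-separated $E_n\subset K$, which governs $N^d(n,r,K)$. Following Misiurewicz I would form the measures $\sigma_n=\frac1{|E_n|}\sum_{x\in E_n}\delta_x$ and the Birkhoff averages $\mu_n=\frac1n\sum_{k=0}^{n-1}(T^k)_{\ast}\sigma_n$. In the compact case a weak-$\ast$ accumulation point $\mu$ of $(\mu_n)$ is $T$-invariant and a partition-counting estimate yields $h_\mu(T)\ge\limsup_n\frac1n\log|E_n|$, which closes the argument. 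Over a noncompact $X$ the sequence $(\mu_n)$ may lose mass: one passes instead to a vague limit $\nu$, again $T$-invariant with $\|\nu\|\le 1$, and the partition-counting estimate must be corrected by a term accounting for the escaping mass. Keeping that correction below $h^{\ast}$ is exactly what forces one to run the argument with a distance $d$ tailored to the behaviour of $T$ near infinity — so that orbit segments spending most of their time outside large compacta cannot be $(n,r)$-separated too efficiently — and then to take the infimum over such distances; this construction of adapted metrics is the technical core of \cite{MR1348316}.

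I expect this last point to be the main obstacle. In contrast with Misiurewicz's original proof, the empirical measures need not converge to a probability measure, so the escape of mass has to be paid for inside the covering estimate; controlling that defect requires a compatible metric in which escape to infinity does not inflate the dynamical covering numbers beyond what invariant probability measures can account for — the same mechanism that this paper analyses quantitatively for geodesic flows in Theorem \ref{thm:escape_mass}.
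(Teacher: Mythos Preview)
The paper does not give a proof of this theorem: it is stated as background and attributed to the literature (see the sentence preceding the statement, which refers to \cite{Din} for the compact case and \cite{MR1348316} for the general one). There is therefore nothing in the paper to compare your proposal against.

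Your outline is a reasonable sketch of the classical route --- Goodwyn's inequality for $\sup_\mu h_\mu(T)\le h_{top}(T)$ and Misiurewicz's construction for the reverse --- and you correctly identify that the non-compact case hinges on the definition $h_{top}(T)=\inf_d h^d(T)$, so that the hard direction requires exhibiting an adapted metric. That is indeed the content of \cite{MR1348316}, which is the same reference the paper invokes. One small caution: your description of the mechanism (``so that orbit segments spending most of their time outside large compacta cannot be $(n,r)$-separated too efficiently'') is heuristically right but vague; the actual construction in \cite{MR1348316} proceeds by choosing the metric so that the complement of a large compact set has small diameter, which directly caps the contribution of the tail to separation counts. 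If you intend to write out the proof in full rather than cite it, that is the concrete device you will need.
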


The topological entropy is related to the cardinalities of $(n,r)$-coverings of compact subsets of $X$, we clearly do not need of any measure to count that. However, the variational principle shows a sort of intrinsic relation between cardinalities of $(n,r)$-coverings and measures. This relation seems to be more natural and evident after considering the following definition and theorem below. Let $\mu$ be a $T$-invariant probability measure and let $0<\delta<1$. Define $N^d_\mu(n,r,\delta)$ as the minimal cardinality of a $(n,r)$-covering of a subset of $X$ with measure larger than $1-\delta$. Note that $N^d_\mu(n,r,\delta)$ is always finite since $N^d_\mu(n,r,\delta)\leq N^d(n,r,K)$ for every compact set $K\subset X$ with measure $\mu(K)>1-\delta$.

\begin{theorem}[Katok]\label{thm:katok} Let $(X,d)$ be a complete locally compact metric space and $T:X\to X$ a continuous transformation. If $\mu$ is an ergodic $T$-invariant probability measure on $X$, then for every $0<\delta<1$, we have
\begin{equation}\label{eq:katok}
h_\mu(T)\leq \lim_{r\to 0}\liminf_{n\to\infty} \frac{1}{n}\log N^d_\mu(n,r,\delta).
\end{equation}
\end{theorem}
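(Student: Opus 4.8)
The plan is to deduce the estimate from the Brin--Katok local entropy theorem together with an elementary covering argument; the non-compactness of $X$ is immaterial here, since everything is phrased through the $\mu$-measure of dynamical balls. By Brin--Katok applied to the ergodic measure $\mu$, for $\mu$-a.e.\ $x$ one has $\lim_{r\to 0}\liminf_{n\to\infty}-\tfrac1n\log\mu(B^d_n(x,r))=h_\mu(T)$. Fix $\epsilon>0$ and choose $\eta\in(0,1-\delta)$, which is possible because $\delta<1$. The function $f_r(x):=\liminf_{n}-\tfrac1n\log\mu(B^d_n(x,r))$ is nonincreasing in $r$ (larger balls have larger measure), so $f_{1/m}\uparrow h_\mu(T)$ $\mu$-a.e.; hence there are $r_0>0$ and a Borel set $G_0$ with $\mu(G_0)>1-\tfrac{\eta}{2}$ on which $f_{r_0}\ge h_\mu(T)-\epsilon$. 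On $G_0$ this forces $\mu(B^d_n(x,r_0))\le e^{-n(h_\mu(T)-2\epsilon)}$ for every $n$ past some $N(x)$, so --- using that $x\mapsto\mu(B^d_n(x,r_0))$ is Borel --- the sets $G_N:=\{x\in G_0:\mu(B^d_n(x,r_0))\le e^{-n(h_\mu(T)-2\epsilon)}\text{ for all }n\ge N\}$ increase to $G_0$, and I fix $N_1$ with $\mu(G):=\mu(G_{N_1})>1-\eta$.

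For the covering step, fix $0<r\le r_0/2$ and $n\ge N_1$, and let $Y\subset X$ with $\mu(Y)>1-\delta$ be covered by dynamical balls $B^d_n(z_1,r),\dots,B^d_n(z_M,r)$ with $M=N^d_\mu(n,r,\delta)$. Discard the balls disjoint from $G$; for each remaining index $l$ pick $w_l\in B^d_n(z_l,r)\cap G$, so that the triangle inequality for $d_n$ gives $B^d_n(z_l,r)\subset B^d_n(w_l,2r)\subset B^d_n(w_l,r_0)$ and hence $\mu(B^d_n(z_l,r))\le e^{-n(h_\mu(T)-2\epsilon)}$ since $w_l\in G$ and $n\ge N_1$. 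The kept balls still cover $G\cap Y$, so
$$1-\eta-\delta\;<\;\mu(G\cap Y)\;\le\!\!\sum_{l:\,B^d_n(z_l,r)\cap G\neq\emptyset}\!\!\mu(B^d_n(z_l,r))\;\le\;M\,e^{-n(h_\mu(T)-2\epsilon)},$$
whence $\tfrac1n\log N^d_\mu(n,r,\delta)\ge\tfrac1n\log(1-\eta-\delta)+h_\mu(T)-2\epsilon$. Letting $n\to\infty$ yields $\liminf_{n}\tfrac1n\log N^d_\mu(n,r,\delta)\ge h_\mu(T)-2\epsilon$ for every $r\le r_0/2$; since $N^d_\mu(n,r,\delta)$ is nonincreasing in $r$, the limit as $r\to 0$ exists, and sending $r\to0$ and then $\epsilon\to0$ gives \eqref{eq:katok}.

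The one external ingredient carrying the argument is the Brin--Katok theorem, and the only slightly delicate point is the uniformization in the first paragraph --- converting the $\mu$-a.e.\ convergence into a bound holding simultaneously on $G$ with $\mu(G)>1-\eta$ --- which is also where the harmless constraint $\eta<1-\delta$ is used. If one wishes to avoid invoking Brin--Katok, the classical argument of Katok adapts: reduce, by the standard partition inequalities together with inner regularity of $\mu$, to a finite partition $\mathcal Q=\{Q_1,\dots,Q_k,B\}$ whose atoms $Q_i$ are compact, with $\mu(B)$ as small as desired and $h_\mu(T,\mathcal Q)$ as close to $h_\mu(T)$ as desired; since finitely many pairwise disjoint compacta are uniformly separated, for small $r$ every $(n,r)$-ball meets at most $2^n$ atoms of $\mathcal Q^n$, and running the same union bound against the Shannon--McMillan--Breiman estimate $\mu(\mathcal Q^n(x))\le e^{-n(h_\mu(T,\mathcal Q)-\epsilon)}$ gives the conclusion up to a spurious factor $2^n$; that factor is in turn absorbed by sharpening it to $e^{o(n)+nH(\mu(B))}$ via Birkhoff's theorem on the frequency of visits to $B$ and then letting $\mu(B)\to0$. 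The Brin--Katok route avoids this bookkeeping, which is why I would prefer it.
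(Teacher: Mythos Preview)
The paper does not give its own proof of this statement: the theorem is stated with attribution to Katok \cite{MR573822}, followed only by a remark that in the non-compact setting Katok's argument yields the inequality rather than the full equality. There is therefore nothing in the paper to compare your attempt against.

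That said, your argument is correct. The primary route via the Brin--Katok local entropy formula is clean: the uniformization step (passing from a.e.\ convergence to a set $G$ of measure $>1-\eta$ on which the dynamical-ball estimate holds uniformly for $n\ge N_1$), the recentering $B^d_n(z_l,r)\subset B^d_n(w_l,r_0)$ via the $d_n$-triangle inequality, and the union bound against $\mu(G\cap Y)>1-\eta-\delta$ are all sound. The only point deserving a comment is that you invoke Brin--Katok on a non-compact space without justification; the direction you actually use --- that $\liminf_n -\tfrac{1}{n}\log\mu(B^d_n(x,r))\ge h_\mu(T)-\epsilon$ for small $r$ --- is the robust half and does hold for ergodic invariant probability measures on Polish spaces, but a reference would be preferable to the assertion that non-compactness is ``immaterial''. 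Your fallback sketch via a partition into compact atoms plus a small-measure remainder is exactly Katok's original strategy and is self-contained in the stated generality, so you already have a remedy in hand should the Brin--Katok citation be questioned.
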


\begin{remark}
Katok proved in \cite{MR573822} that \eqref{eq:katok} is an equality for continuous transformations defined on compact metric spaces. For non-compact metric spaces his proof only gives the above inequality.
\end{remark}

\subsection{Geodesic flow on negatively curved manifolds}
\noindent \\
Let $(\widetilde{M},g)$ be a simply connected, complete, negatively curved Riemannian manifold satisfying the pinching condition $-b^2\le K_g\le -a^2$ for some real numbers $a$ and $b$ where $a>0$. Here $K_g$ stands for the sectional curvature of $g$. We moreover assume that the derivative of the sectional curvature is uniformly bounded. The classical example to have in mind is hyperbolic $n$-space, where the sectional curvature is constant equal to $-1$. As usual, we denote by $\partial_\infty \widetilde{M}$ to the Gromov boundary at infinity of $\widetilde{M}$. We will be interested in quotients of $\widetilde{M}$ by certain groups of isometries. Let $\Gamma$ be a discrete, torsion free subgroup of $Iso(\widetilde{X})$. Define $M=\widetilde{M}/\Gamma$ and observe that by hypothesis $M$ is a manifold where we can descend the metric $g$. By abuse of notation we still denote the metric on $M$  as $g$. The metric $g$ defines a canonical flow on the tangent space of $\widetilde{M}$, the geodesic flow. Since  by assumption $\widetilde{M}$ is complete, the geodesic flow is complete as well. The geodesic flow commutes with isometries of $\widetilde{M}$ so it will naturally descend to $TM$. We will denote by $(g_t)_{t\in\R}$ the geodesic flow on $\widetilde{M}$ (the same notation will be used to denote the geodesic flow on $M$). As any autonomous Hamiltonian system, level sets of the  Hamiltonian are invariants by the Hamiltonian flow, in our case the unit tangent bundle $T^1M$ of $M$ is invariant, i.e. the set of vectors with norm one. We also mention that the dynamics on all level sets are conjugate to each other, so there is not loss of generality in restricting to $T^1M$ (this is very particular of our situation).

For us $\mu$ will always stand for a $(g_t)$-invariant probability measure on $T^1M$. Observe that by Poincar\'e recurrence theorem, the measure $\mu$ needs to be supported in the nonwandering set $\Omega$ of the geodesic flow. We now describe $\Omega$ in terms of the action of $\Gamma$ on the boundary at infinity $\partial_\infty\widetilde{M}$ of $\widetilde{M}$. Fix $o\in \widetilde{M}$. Recall that $\partial_\infty\widetilde{M}$ is the set of equivalent classes of asymptotic geodesic rays (two such geodesic rays determine the same class if they are at bounded distance). The unit tangent bundle $T^1\widetilde{M}$ is identified with $(\partial_{\infty}\widetilde{M}\times\partial_{\infty}\widetilde{M})\setminus \text{Diag} \times \R$ via Hopf's coordinates by sending each vector $\tilde{v}\in T^1\widetilde{M}$ to $(v_-,v_+,b_{v_+}(o,\pi(\tilde{v})))$. Here $v_-$ and $v_+$ are respectively the negative and positive ends at infinity of the oriented geodesic line determined by $\tilde{v}$ in $\widetilde{M}$, and $b_{v_+}(o,\pi(\tilde{v}))$ is the Busemann function defined for all $x,y\in\widetilde{M}$ and $\xi\in\partial_\infty\widetilde{M}$ as
$$b_\xi(x,y)=\lim_{t\to\infty} d(x,\xi_t)-d(y,\xi_t),$$
where $t\mapsto \xi_t$ is any geodesic ray ending at $\xi$. Under this identification the geodesic flow acts by translation in the third coordinate. Define the \emph{limit set} $L(\Gamma)$ of $\Gamma$ as the smallest non-empty closed $\Gamma$-invariant subset of $\partial_\infty\widetilde{M}$. Note that for any $z\in \widetilde{M}$, the limit set coincides with $\overline{\Gamma\cdot z}\setminus \Gamma \cdot z$. Finally, the nonwandering set $\Omega$ is identified with $((L(\Gamma)\times L(\Gamma))\setminus \text{Diag} \times \R)/\Gamma$ via Hopf's coordinates.

\subsubsection{Some remarks on entropy and pressure}
\noindent\\
As usual $h_\mu(g)$ is the entropy of the flow with respect to $\mu$, more precisely, the entropy of  $g_1$ with respect to $\mu$.
The notion of topological entropy can be generalized by taking into consideration weights on the points of the phase space. These weights are given by continuous functions $F:T^1M\to \R$ that, for historical reasons, we will call \textit{potentials}. The \textit{topological pressure of a potential} $F$ is defined by $$P(F)=\sup\left\{h_\mu(g)+\int Fd\mu \text{ }\big| \text{ } \mu\text{ a }(g_t) \text{-invariant probability measure}\right\}.$$
We say that a measure $\mu$ is an \textit{equilibrium state} of $F$ if $P(F)=h_\mu(g)+\int Fd\mu$.

\begin{definition} Let $m$ be a $(g_t)$-invariant measure on $T^1M$. We say that $m$ verifies the \emph{Gibbs property for the potential} $F:T^1M\to \R$ if for every compact set $K\subset T^1M$ and $r>0$ there exists a constant $C_{K,r}\geq 1$ such that for every $v\in K$ and every $n\geq 1$ such that $g^n(v)\in K$, we have
$$C_{K,r}^{-1}\leq \frac{m(B_n(v,r))}{e^{\int_0^n F(g_t(v))-P(F)dt}}\leq C_{K,r}.$$
\end{definition}

It turns out that the geodesic flow on $T^1M$ always admits a $(g_t)$-invariant measure verifying the Gibbs property. Moreover, it is the equilibrium measure when finite (see for instance \cite{MR3444431}) and the support of this measure is the entire non-wandering set of the geodesic flow. In the particular case when $F=0$, this measure is called the Bowen-Margulis measure and will be denoted by $m_{BM}$.

From now on, we will always consider $d$ as the Riemannian distance on $T^1M$. For the sake of simplicity we will denote $N_\mu(n,r,\delta)$ the number $N^d_\mu(n,r,\delta)$ defined in the previous subsection.

\begin{lemma}\label{lem:katok_entropy_independence} Let $M=\widetilde{M}/\Gamma$ be a complete Riemannian manifold with pinched negative sectional curvature. Assume that the derivatives of the sectional curvature are uniformly bounded. If $\mu$ is an ergodic $(g_t)$-invariant probability measure on $T^1M$, then for every $0<\delta<1$ and $r>0$, we have
$$h_\mu(g)\leq \liminf_{n\to\infty} \frac{1}{n}\log N_\mu(n,r,\delta).$$
\end{lemma}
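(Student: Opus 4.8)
The plan is to deduce the lemma from Katok's inequality (Theorem~\ref{thm:katok}), which applies here since $(T^1M,d)$ is a complete, locally compact metric space and $\mu$ is ergodic, and which reads
$$h_\mu(g)\le \lim_{r\to 0}\liminf_{n\to\infty}\frac1n\log N_\mu(n,r,\delta).$$
So it suffices to show that the inner quantity $q(r):=\liminf_{n\to\infty}\frac1n\log N_\mu(n,r,\delta)$ does not depend on $r$. Since any $(n,r_1)$-ball is contained in the $(n,r_2)$-ball with the same center whenever $r_1\le r_2$, we have $N_\mu(n,r_2,\delta)\le N_\mu(n,r_1,\delta)$, so $q$ is non-increasing in $r$; it is therefore enough to prove that $q$ is also non-decreasing, i.e.\ that $q(r')\le q(r)$ whenever $0<r'<r$.

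For this I would use the following \emph{contraction property} of the geodesic flow: given $0<r'<r$ there is an integer $T=T(r,r')\ge 0$ such that for every $v\in T^1M$ and every $n\ge 1$,
$$B_n(v,r)\subseteq g_{-T}\big(B_{n-2T}(g_T v,r')\big).$$
Morally this says that two orbit segments of length $n$ remaining $r$-close must, because the curvature is bounded above by $-a^2<0$, be exponentially closer than $r$ once one is at time-distance $T$ from both endpoints; one proves it by lifting to $\widetilde{M}$ and invoking the (strict) convexity of $t\mapsto d(\tilde\gamma_1(t),\tilde\gamma_2(t))$ together with the hyperbolicity estimates for Jacobi fields, the point being that under our hypotheses (pinched curvature, uniformly bounded derivative of the curvature) these estimates, and hence $T(r,r')$, are uniform over $T^1M$. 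Granting this, if $A\subseteq T^1M$ has $\mu(A)>1-\delta$ and is covered by $N:=N_\mu(n,r,\delta)$ dynamical balls $B_n(v_i,r)$, then applying the inclusion to each $i$ shows that the balls $B_{n-2T}(g_T v_i,r')$ cover $g_T(A)$, which has measure $\mu(A)>1-\delta$ because $\mu$ is $g_T$-invariant; hence $N_\mu(n-2T,r',\delta)\le N_\mu(n,r,\delta)$. Dividing by $n$, letting $n\to\infty$, and using $\tfrac{n}{n-2T}\to 1$, this gives $q(r')\le q(r)$, so $q$ is constant and the lemma follows from Katok's inequality.

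The main obstacle is establishing the contraction property with a constant $T(r,r')$ that is uniform in the base point. On a compact negatively curved manifold this is a routine consequence of the Anosov property, but here $T^1M$ is non-compact and the injectivity radius degenerates in the cusps, so some care is needed for orbit segments travelling deep into a cusp; this is exactly where the assumption that the derivative of the sectional curvature is uniformly bounded enters, as it yields uniform hyperbolicity of the geodesic flow. The remaining steps — the reduction to scale-independence, the covering/counting argument, and the passage to the limit (together with the routine comparison between integer and continuous times inside $B_n$) — are elementary.
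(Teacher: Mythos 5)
Your strategy is genuinely different from the paper's. You reduce to showing that $q(r)=\liminf_n\frac1n\log N_\mu(n,r,\delta)$ is independent of $r$ and then try to prove this via a uniform contraction property of the flow. The paper instead plays the covering numbers off against a reference Gibbs measure $m$ (e.g.\ Bowen--Margulis): the Gibbs property gives $m(B_n(v,\rho))\asymp e^{-nP(F)}$ with a constant depending only on $\rho$ and on a compact set containing $v$ and $g^nv$ (but \emph{not} on $n$ or on where the orbit goes in between), so a maximal $(n,r)$-separated subset of $B_n(v,r')$ has cardinality bounded uniformly in $n$; no geometric contraction lemma is needed. That reduction to scale-independence is the same in spirit, but the mechanism is entirely different.

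There are two problems with the contraction property you rely on. First, as literally stated it is false, because of the flow direction: if $w=g_\tau v$ with $r'<\tau<r$, then $w\in B_n(v,r)$ for all $n$, but $d(g^i g_T w,g^i g_T v)\approx\tau>r'$ for all $i$, so $w\notin g_{-T}\bigl(B_{n-2T}(g_Tv,r')\bigr)$. This is a standard defect of Bowen balls for flows and is repairable (e.g.\ by absorbing a flow-direction thickening into a bounded multiplicative factor in the covering count), but the set inclusion you wrote cannot hold. Second, and more seriously, the uniformity of $T(r,r')$ over all of $T^1M$ is not a routine consequence of negative curvature here, and you correctly identify it as ``the main obstacle'' but then leave it entirely unproved. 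The convexity and Jacobi-field estimates you invoke live upstairs in $\widetilde M$; along an orbit segment that makes a long excursion into a cusp, the injectivity radius of $M$ degenerates, so $r$-closeness of $g_t v$ and $g_t w$ in $T^1M$ need not lift to $r$-closeness of fixed lifts in $T^1\widetilde M$, and the deck transformation matching the two orbits can change with $t$. Some genuine argument is required to control this regime. The paper's Gibbs-measure argument is precisely what circumvents this: the Gibbs constant $C_{K,r}$ is tied only to a compact window $K$ containing the two endpoints $v$ and $g^n v$, and is insensitive to how deep into a cusp the segment travels in between. Until the contraction lemma is both correctly stated (accounting for the flow direction) and actually proved uniformly over $T^1M$, the argument has a gap at its central step.
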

\begin{proof}
Let $0<\delta<1$, $0<r\leq r'$ and $n\geq 1$. Fix $\epsilon>0$ small and let $K\subset T^1M$ be a compact set such that $\mu(K)>1-\delta/2$ and  $N_\mu(n,r',\delta/3)\ge N(n,r',K)-\epsilon$. By Birkhoff ergodic theorem there exists a set a compact set $K'$ and $N_0>0$ such that  $\mu(K')>1-\delta/3$,  and $|\frac{1}{n}\int_0^n F(g_t v)-\int Fd\mu|<\epsilon$ for all $v\in K'$ and $n\ge N_0$. Since $\mu(K)>1-\delta/3$, we have $\mu(K\cap g^{-n}K\cap K')>1-\delta$. For $n\ge N_0$ define $K_n=K\cap g^{-n}K\cap K'$. Consider a $(n,r')$-covering with minimal cardinality of $K_n$ and denote by $S$ the set of centers of such dynamical balls.
 For each $v\in S$, let $E_v$ be a $(n,r)$-separated set of maximal cardinality in $B_n(v,r')$. We can moreover assume that $E_v\subset K_n$.  By definition, the $(n,r/2)$-dynamical balls with centers in $E_v$ are disjoint. Moreover, since $\#E_v$ is maximal, the collection of $(n,r)$-dynamical balls having centers in $E_v$ is a $(n,r)$-covering of $B_n(v,r')$. Therefore
\begin{equation*}\label{eq:lem:katok_entropy}
\sum_{w\in E_v}m(B_n(w,r/2))=m\left(\bigcup_{w\in E_v} B_n(w,r/2) \right) \leq m(B_n(v,r')),
\end{equation*}
and so
$$\#E_v \leq \frac{m(B_n(v,r'))}{\min_{w\in E_v}m(B_n(w,r/2))}.$$
Recall now that $m$ satisfies the Gibbs property. As $w,g^n(w)\in K$ by construction, there exists a constant $C\geq 1$, depending only on the compact $K$, $r$ and $r'$, such that $$\#E_v \leq C \exp(\int_0^n F(g_tv)dt- \min_{w\in E_v} \int_0^n F(g_t w)dt).$$ Therefore by the definition of $K'$ we have
$$\#E_v \leq C \exp(2n\epsilon)$$
Observe that
\begin{eqnarray*}
N_\mu(n,r,\delta) &\leq N(n,r,K_n) \leq C\exp(2n\epsilon) \#S = C\exp(2n\epsilon)N(n,r',K_n) \\
&\le C\exp(2n\epsilon)N(n,r',K) \le C\exp(2n\epsilon)(N_\mu(n,r',\delta/3) +\epsilon)
\end{eqnarray*}
Then $$\liminf_{n\to\infty} \frac{1}{n}\log N_\mu(n,r,\delta)\le \liminf_{n\to\infty} \frac{1}{n}\log N_\mu(n,r',\delta/3)+2\epsilon,$$
But $\epsilon>0$ was arbitrary. Finally, using Theorem \ref{thm:katok}, it follows
\begin{eqnarray*}
h_\mu(g)&\leq& \liminf_{n\to\infty} \frac{1}{n}\log N_\mu(n,r,\delta).
\end{eqnarray*}
\end{proof}

In this paper we will be interested in investigate properties of the pressure of H\"older potentials. For this, we start with a few definitions that will be important in following sections.

\begin{definition} Let $F:T^1M\to \R$ be a function and $\widetilde{F}$ its lift to $T^1\widetilde{M}$. Let $G$ be a subgroup of isometries of $\widetilde{M}$. Define the Poincar\'e series associated to $(G,F)$ based at $x\in \widetilde{M}$ as
$$P_G(s,F)=\sum_{\gamma\in G} \exp\left(\int_x^{\gamma x}(\widetilde{F}-s)\right),$$
where $\int_x^{y}L$ denotes the integral of $L$ along the geodesic segment $[x,y]$ (in the universal cover $\widetilde{M}$). The critical exponent of $(G,F)$ is $$\delta^F_G=\inf\{\text{s } | \text{ }P_G(s,F)\text{ is finite}\}.$$
We say $(G,F)$ is of convergence type if $P_G(\delta_G^F,F)$ is finite. Otherwise we say $(G,F)$ is of divergence type.
If $F$ is the zero potential, then we use $\delta_G$ when refering to the critical exponent of $(G,0)$.
\end{definition}
\begin{remark}
Observe that under our assumptions, if $F$ is bounded then $\delta_G^F$ is finite. The critical exponent and being of divergence/convergence type do not depend on the base point $x$ if $F$ is H\"older-continuous (see \cite{coudene}).
\end{remark}

Let $\Gamma$ be a discrete torsion free subgroup of isometries of $\widetilde{M}$. The group $\Gamma$ is said to be elementary if $\#L(\Gamma)<\infty$. Otherwise we say that $\Gamma$ is a non-elementary group. The convergence/divergence type of the group $\Gamma$ has strong ergodic implications for the geodesic flow with respect to the Bowen-Margulis measure as we can see from the theorem stated below (see \cite{Yue}).

\begin{theorem}\label{thm:pts} Let $\Gamma$ be a discrete non-elementary torsion free subgroup of isometries of $\widetilde{M}$. Then, we have
\begin{enumerate}
\item[(a)] the group $\Gamma$ is of divergence type if and only if the geodesic flow $(g_t)$ is ergodic and completely conservative with respect to $m_{BM}$, and
\item[(b)] the group $\Gamma$ is of convergence type if and only if the geodesic flow $(g_t)$ is non-ergodic and completely dissipative with respect to $m_{BM}$, and
\end{enumerate}
\end{theorem}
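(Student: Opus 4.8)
The statement above is the Hopf--Tsuji--Sullivan dichotomy, in the form extended to pinched variable curvature by Yue. The plan is to transfer the dynamical properties of $(g_t,m_{BM})$ on $T^1M$ to combinatorial properties of the $\Gamma$-action on $\partial_\infty\widetilde M$ via the Patterson--Sullivan density, and then to read off conservativity and ergodicity from the (non)divergence of the Poincar\'e series. First I would recall the Patterson--Sullivan construction: a $\Gamma$-equivariant family $\{\mu_x\}_{x\in\widetilde M}$ of finite measures supported on $L(\Gamma)$ with the conformal relation $\tfrac{d\mu_x}{d\mu_y}(\xi)=e^{-\delta_\Gamma b_\xi(x,y)}$; its construction in our setting uses the pinching and the bounded-derivative-of-curvature hypotheses to control Busemann cocycles and the geometry of horoballs. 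From this density one writes $\widetilde m_{BM}$ on $T^1\widetilde M\cong\big((\partial_\infty\widetilde M)^2\setminus\mathrm{Diag}\big)\times\R$, in Hopf coordinates, as $d\widetilde m_{BM}(v)=e^{\delta_\Gamma(b_{v_-}(o,\pi v)+b_{v_+}(o,\pi v))}\,d\mu_o(v_-)\,d\mu_o(v_+)\,dt$; this is independent of $o$, is $\Gamma$-invariant, is invariant under $g_t$ (which acts by translation in $t$), and descends to $m_{BM}$ on $T^1M$. The one quantitative input I will use repeatedly is Sullivan's shadow lemma: for $R$ large, $\mu_o\big(\mathcal O_R(o,\gamma o)\big)\asymp e^{-\delta_\Gamma d(o,\gamma o)}$ for $\gamma\in\Gamma$, where $\mathcal O_R(o,\gamma o)$ is the shadow seen from $o$ of the ball $B(\gamma o,R)$, with the usual care near parabolic fixed points.

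Next I would prove that $m_{BM}$ is conservative if and only if $\Gamma$ is of divergence type. In the convergence-type case I would build wandering sets: fixing a small ball $B\subset T^1M$ with a lift $\widetilde B$, the product form of $\widetilde m_{BM}$ together with the shadow lemma gives $\widetilde m_{BM}\{v\in\widetilde B:\ g_tv\in\gamma\widetilde B\text{ for some }t\}\lesssim e^{-\delta_\Gamma d(o,\gamma o)}$, so $\sum_\gamma e^{-\delta_\Gamma d(o,\gamma o)}<\infty$ and Borel--Cantelli force $m_{BM}$-a.e.\ orbit to leave $B$ eventually; covering $T^1M$ by countably many such $B$ yields complete dissipativity. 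In the divergence-type case I would first show, following Sullivan, that $\mu_o(\Lambda_{\mathrm{rad}})=\|\mu_o\|$, where $\Lambda_{\mathrm{rad}}$ is the conical limit set: $\Lambda_{\mathrm{rad}}$ is a $\limsup$ of shadows $\mathcal O_R(o,\gamma o)$ whose measures sum to $\asymp P_\Gamma(\delta_\Gamma,0)=\infty$; then, since a $v$ with both endpoints in $\Lambda_{\mathrm{rad}}$ projects to a $g_t$-orbit recurrent to a fixed compact subset of $T^1M$, a first-return argument to that compact set forces $m_{BM}$ to be conservative. Because $\Gamma$ is of exactly one of the two types, these two implications already give the dichotomy ``conservative or completely dissipative'' with no separate zero--one argument.

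Finally, conservativity $\Rightarrow$ ergodicity is the Hopf argument: on the conservative part one applies the Hopf ratio ergodic theorem to Birkhoff averages along the flow and uses the local product structure of $m_{BM}$ together with the contraction of the strong stable and unstable horospherical foliations to conclude that every $g_t$-invariant function is $m_{BM}$-a.e.\ constant. Conversely ergodicity forces conservativity, since complete dissipativity would realize $(T^1M,g_t,m_{BM})$ as a translation flow over a nontrivial base (nontrivial because $L(\Gamma)$ is a Cantor set), which is never ergodic. Assembling: divergence type $\Leftrightarrow$ conservative $\Leftrightarrow$ ergodic and conservative, while convergence type $\Leftrightarrow$ completely dissipative $\Leftrightarrow$ non-ergodic and completely dissipative, which is exactly (a) and (b).

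I expect the main obstacle to be the implication ``divergence type $\Rightarrow$ $\mu_o(\Lambda_{\mathrm{rad}})$ is full'': the shadow events $\mathcal O_R(o,\gamma o)$ are far from independent, so the naive divergence Borel--Cantelli fails, and one must argue by contradiction, using the conformal relation to transport the Patterson--Sullivan mass of a hypothetical non-radial set of positive measure to other basepoints and then summing shadows there against the divergence of $P_\Gamma(\cdot,0)$. Making the shadow lemma uniform enough for this argument, and controlling it near the parabolic fixed points of $\Gamma$, is precisely where the pinching and bounded-curvature-derivative hypotheses enter; this is the technical heart inherited from the work of Sullivan and Yue.
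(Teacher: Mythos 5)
The paper does not prove this theorem: it simply states it with a citation to \cite{Yue}, which is where the Hopf--Tsuji--Sullivan dichotomy is established in pinched variable negative curvature. Your outline faithfully reconstructs the standard argument in that reference: conformal Patterson--Sullivan densities and the Bowen--Margulis product measure in Hopf coordinates, Sullivan's shadow lemma, convergence-of-Poincar\'e-series plus Borel--Cantelli to build wandering sets in one direction, fullness of $\mu_o$ on the radial limit set plus recurrence to a compact cross-section in the other, and Hopf's ratio-ergodic argument with contraction of the horospherical foliations to upgrade conservativity to ergodicity. You also correctly identify the hard step (full measure of the radial limit set in the divergence case) and the role of the pinching and bounded-curvature-derivative hypotheses. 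One small inaccuracy worth flagging: $L(\Gamma)$ need not be a Cantor set (for a lattice it is all of $\partial_\infty\widetilde M$); what your dissipative$\Rightarrow$non-ergodic step actually requires is only that the wandering cross-section be measure-theoretically nontrivial, which follows for any non-elementary $\Gamma$ from the fact that $L(\Gamma)$ is uncountable and $\mu_o$ is non-atomic, so the conclusion stands. In short, since the paper cites the result rather than proving it, there is no in-paper argument to compare against, but your sketch is an essentially correct account of the proof from the cited source.
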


We recall now the construction of \emph{Gibbs measures} for $(\Gamma,F)$. Let $\delta\in\R$. A \emph{Patterson density} of dimension $\delta$ for $(\Gamma,F)$ is a family of finite nonzero (positive Borel) measures $(\sigma_x)_{x\in\widetilde{M}}$ on $\partial_\infty \widetilde{M}$, such that, for every $\gamma\in\Gamma$, for all $x,y\in\widetilde{M}$, for every $\xi\in\partial_\infty\widetilde{M}$, we have
$$\gamma_\ast\sigma_x=\sigma_{\gamma x} \quad \mbox{and} \quad \frac{d\sigma_x}{d\sigma_y}(\xi)=e^{-C_{F-\delta,\xi}(x,y)},$$
where $C_{F,\xi}(x,y)$ is the \emph{Gibbs cocycle} defined as
$$C_{F,\xi}(x,y)=\lim_{t\to\infty}\int_y^{\xi_t}\widetilde{F}-\int_x^{\xi_t}\widetilde{F},$$
for any geodesic ray $t\mapsto\xi_t$ ending at $\xi$. Note that the limit in the definition of the Gibbs cocycle always exists since the manifold has negative curvature and the potential is H\"older-continuous. It coincides with the Busemann cocycle when $F=-1$.
By a result of Patterson \cite{MR0450547} (see \cite{MR3444431} for the general case), if $\delta^F_\Gamma<\infty$, then there exists at least one Patterson density of dimension $\delta^F_\Gamma$ for $(\Gamma,F)$, with support equal to the limit set $L(\Gamma)$ of $\Gamma$. If $(\Gamma,F)$ is of divergence type then there is only one Patterson density of dimension $\delta^F_\Gamma$. Denote by $(\sigma^{\iota}_x)$ the Patterson density of dimension $\delta^F_\Gamma$ for $(\Gamma,F\circ \iota)$, where $\iota$ is the flip isometry map on $T^1\widetilde{M}$. Using the Hopf parametrisation $v\mapsto (v_-,v_+,t)$ with respect to a base point $o\in\widetilde{M}$, the measure
$$d\tilde{m}(v)=e^{C_{F\circ \iota-\delta^F_\Gamma,v_-}(x_0,\pi(v))+C_{F-\delta^F_\Gamma,v_+}(x_0,\pi(v))}d\sigma^\iota_o(v_-)d\sigma_o(v_+)dt,$$
is independent of $o\in\widetilde{M}$, $\Gamma$-invariant and $(g_t)$-invariant. This induces a measure $m$ on $T^1M$ called the \emph{Gibbs measure associated to the Patterson density} $\sigma_x$.

For the potential $F=0$, the following theorem was proven by Sullivan \cite{MR766265} in the compact case and by Otal-Peign\'e \cite{MR2097356} in the non-compact situation. It was recently extended to arbitrary bounded H\"older potentials by Paulin-Pollicott-Schapira \cite{MR3444431}.

\begin{theorem}\label{thm:pps} Let $\widetilde{M}$ be a complete simply connected Riemannian manifold, with dimension at least 2 and pinched negative sectional curvature. Let $\Gamma$ be a non-elementary discrete group of isometries of $\widetilde{M}$. Let $\widetilde{F}:T^1\widetilde{M}\to\R$ be a bounded H\"older-continuous $\Gamma$-invariant potential and let $F:T^1M\to\R$ be its projection. Then,
\begin{enumerate}
  \item[(a)] the topological pressure satisfies $P(F)=\delta^F_\Gamma$, and
  \item[(b)] if there exists a finite Gibbs measure $m_F$ for $(\Gamma,F)$, then $m^F=m_F/||m_F||$ is the unique equilibrium measure for $(\Gamma,F)$. Otherwise there exists no equilibrium state for $(\Gamma,F)$.
\end{enumerate}
\end{theorem}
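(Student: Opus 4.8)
The plan is to run the standard thermodynamic formalism for Patterson--Sullivan/Gibbs measures, splitting into the pressure identity (a), proved by matching bounds, and the identification of equilibrium states (b), proved by a Hopf-type argument. I would start from the Gibbs measure $m$ on $T^1M$ attached to a Patterson density $(\sigma_x)$ of dimension $\delta^F_\Gamma$ constructed above: it is locally finite, $(g_t)$-invariant, supported on $\Omega$, and, by the shadow lemma underlying the Gibbs property, for each compact $K\subset T^1M$ and $r>0$ there is $C_{K,r}\ge1$ with
\[
C_{K,r}^{-1}\ \le\ \frac{m(B_n(v,r))}{\exp\!\left(\int_0^n\big(F(g_t v)-\delta^F_\Gamma\big)\,dt\right)}\ \le\ C_{K,r}\qquad\text{whenever }v,g^n(v)\in K.
\]
For the upper bound $P(F)\le\delta^F_\Gamma$, fix an ergodic invariant probability $\mu$ (the general case reduces to this by the ergodic decomposition, entropy and $\mu\mapsto\int F\,d\mu$ being affine), fix $r,\epsilon>0$ and $0<\delta<1$, and by Birkhoff pick a compact set of $\mu$-measure $>1-\delta$ --- intersected with a set of $g^n$-returns --- on which $\frac1n\int_0^nF(g_t\cdot)\,dt\ge\int F\,d\mu-\epsilon$ uniformly. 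Take a maximal $(n,r)$-separated subset $E$ of it: the balls $B_n(w,r/2)$ are disjoint and stay in a fixed compact neighbourhood, so their $m$-masses sum to at most a constant, and the lower Gibbs bound together with the uniform Birkhoff estimate forces $\#E\le C\exp\!\big(n(\delta^F_\Gamma-\int F\,d\mu+\epsilon)\big)$; since $N_\mu(n,r,\delta)\le\#E$, Lemma \ref{lem:katok_entropy_independence} gives $h_\mu(g)+\int F\,d\mu\le\delta^F_\Gamma+\epsilon$, and I let $\epsilon\to0$ and take the supremum over $\mu$.

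For $P(F)\ge\delta^F_\Gamma$ I would split into cases. If $m$ is finite, normalise to $m^F$; Poincar\'e recurrence puts $m^F$-a.e.\ orbit back in a fixed compact set infinitely often, so the Gibbs bound plus Birkhoff give $-\frac1n\log m^F(B_n(v,r))\to\delta^F_\Gamma-\int F\,dm^F$ along those times, and subadditivity upgrades this to a limit; Brin--Katok then yields $h_{m^F}(g)=\delta^F_\Gamma-\int F\,dm^F$, so $m^F$ is an equilibrium state and $P(F)\ge\delta^F_\Gamma$. If $m$ is infinite there is no obvious test measure, and I would approximate from inside: exhaust $L(\Gamma)$ by limit sets of convex-cocompact (Schottky-type) subgroups $\Gamma_j<\Gamma$ with $\delta^F_{\Gamma_j}\to\delta^F_\Gamma$, note that each $T^1(\widetilde{M}/\Gamma_j)$ has compact nonwandering set carrying a finite Gibbs measure, hence an equilibrium state of pressure $\delta^F_{\Gamma_j}$, and push these forward to $T^1M$ without changing entropy-plus-integral; equivalently, identify $P(F)$ --- computed with $(n,r)$-separated sets, cf.\ Theorem \ref{thm:variational_principle} --- with the growth rate of the weighted orbit sums $\sum_{n-1\le d(o,\gamma o)\le n}\exp\!\big(\int_o^{\gamma o}\widetilde{F}\big)$ via a shadowing argument on $\Omega$, that rate being $\delta^F_\Gamma$ by definition. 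This proves (a).

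For (b), given an equilibrium state $\mu$, which we may assume ergodic, with $h_\mu(g)+\int F\,d\mu=\delta^F_\Gamma$, I would run a Ledrappier-type argument: express $h_\mu(g)$ through the Jacobian of the strong unstable horospherical foliation, compare with the Gibbs/Katok estimates above, and deduce that equality forces the conditionals of $\mu$ on strong unstable (resp.\ strong stable) horospheres to be proportional to the Patterson densities $\sigma$ (resp.\ $\sigma^\iota$), as in \cite{MR2097356,MR3444431}. In Hopf coordinates every invariant measure disintegrates over $(\partial_\infty\widetilde{M}\times\partial_\infty\widetilde{M})\setminus\text{Diag}$ with Lebesgue factor in $t$, so fixing both boundary conditionals fixes the lift of $\mu$ as a scalar multiple of the measure $d\tilde m$ built from $\sigma^\iota$ and $\sigma$: if $m$ is infinite this scalar cannot be normalised, so no equilibrium state exists, while if $m$ is finite then $\mu=m^F$, giving uniqueness. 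Ergodicity of $m^F$, needed for ``the conditionals determine the measure'', follows from the Hopf--Tsuji--Sullivan dichotomy of Theorem \ref{thm:pts}, since a finite Gibbs measure forces $(\Gamma,F)$ to be of divergence type.

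The hard part will be the lower bound when the Gibbs measure is infinite --- there is no equilibrium state to exhibit, so one must either control the weighted orbit count uniformly on the noncompact set $\Omega$, or carefully justify the inner approximation by convex-cocompact subgroups and the behaviour of pressure under pushforward --- together with the Ledrappier/Hopf step in (b), which is the genuine analytic core: proving that \emph{any} equilibrium state has Patterson--Sullivan conditionals along the strong stable and unstable horospherical foliations.
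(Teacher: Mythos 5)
The paper does not prove Theorem~\ref{thm:pps}: it is stated as a citation (Sullivan~\cite{MR766265} for $F=0$, compact; Otal--Peign\'e~\cite{MR2097356} for $F=0$, noncompact; Paulin--Pollicott--Schapira~\cite{MR3444431} for general bounded H\"older $F$), and the authors take it as an input. So there is no ``paper's own proof'' to compare against; what you have written is a sketch of the argument in those references, and it does capture the right architecture. The upper bound $P(F)\le\delta^F_\Gamma$ via Katok's entropy formula and the Gibbs estimate is correct and is, essentially, what the paper itself re-derives in Lemma~\ref{lem:katok_entropy_independence}. The Brin--Katok/Poincar\'e-recurrence argument for the finite-Gibbs lower bound and the Ledrappier-type identification of conditionals along unstable horospheres in part~(b) are exactly the right steps, and you correctly flag them as the analytic core.

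Where I would press you: the infinite-Gibbs lower bound is not a small gap to wave at. Your proposed inner approximation by convex-cocompact $\Gamma_j<\Gamma$ with $\delta^F_{\Gamma_j}\to\delta^F_\Gamma$ is itself a theorem that needs a citation and a check that it holds for the \emph{weighted} critical exponent (with the potential $F$), not just for $F=0$; in the literature this goes through work of Dal'bo--Peign\'e--Picaud--Sambusetti and Stadlbauer, or the orbital counting approach in \cite{MR3444431}. Moreover, pushing an equilibrium state from $T^1(\widetilde M/\Gamma_j)$ forward to $T^1M$ along the covering map does not automatically preserve $h_\mu(g)+\int F\,d\mu$: the covering is finite-to-one only on compact pieces and need not be injective on the nonwandering set of $\Gamma_j$, so you should instead argue directly that the pushforward is invariant, that $\int F$ is unchanged by $\Gamma$-invariance, and that entropy does not drop (e.g.\ via Abramov/Ledrappier--Walters or by noting the restriction to the relevant support is a conjugacy). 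Alternatively — and this is closer to Otal--Peign\'e — bypass measures entirely and bound $h_{top}$ below by a direct $(n,r)$-separated orbit count seeded by $\{\gamma o:\gamma\in\Gamma\}$, then invoke the Handel--Kitchens variational principle (your Theorem~\ref{thm:variational_principle}) to convert it into the measure-theoretic statement. Finally, in part~(b) you should say explicitly that a finite Gibbs measure forces $(\Gamma,F)$ to be of divergence type (so that the Patterson density of dimension $\delta^F_\Gamma$ is unique and $m^F$ is ergodic), which is what makes ``the conditionals determine $\mu$'' actually conclude $\mu=m^F$; you mention Theorem~\ref{thm:pts} but that dichotomy as stated is only for $F=0$, and the weighted version is in~\cite{MR3444431}.
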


\subsubsection{The geometrically finite case}
\noindent\\
Recall that the \emph{Liouville measure} on $T^1M$ is the volume measure on $T^1M$ induced by the Riemannian metric on $M$. The class of manifolds that we will consider in a large part of this paper is defined as follows.

\begin{definition}
A negatively curved Riemannian manifold $M$ is said to be \textit{geometrically finite} if an $\epsilon$-neighborhood of $\Omega$ has finite Liouville measure.
\end{definition}

We recall that Iso$(\widetilde{M})$ has three types of elements: elliptic, hyperbolic and parabolic. The groups we are considering do not contain elliptic elements because of discreteness and the torsion free assumption. We will in general assume the existence of parabolic elements. If $M$ is geometrically finite, then there exist finitely many non-conjugate maximal parabolic subgroups of $\Gamma$. Each of them provide a standard neighborhood of the cusps of $M$, that is a neighborhood isometric to $\mathcal{H}/\mathcal{P}$, where $\mathcal{H}$ is an horoball based at the fixed point of a maximal parabolic subgroup $\mathcal{P}$ of $\Gamma$. It also means that the action of every maximal parabolic group $\mathcal{P}\subset\Gamma$, with fixed point $\xi_\mathcal{P}\in\partial_\infty\widetilde{M}$, is cocompact on $L(\Gamma)\setminus \xi_\mathcal{P}$ (see \cite{MR1317633}). This geometrical fact will be exploted in following sections. In the geometrically finite case, the behaviour of the maximal parabolic subgroups has some implications in the existence of equilibrium measures for the geodesic flow. The following theorem provides a very useful criterion for the existence of an equilibrium measure. This result was first proven in \cite{MR1776078} for the case when $F=0$, then it was extended for bounded H\"older-continuous potentials in \cite{MR3444431} (which is the version presented below).

\begin{theorem} \label{thm:gibbs_finite} Let $M=\widetilde{M}/\Gamma$ be a geometrically finite Riemannian manifold with pinched negative sectional curvature. Assume that the derivatives of the sectional curvature are uniformly bounded. Let $F:T^1M\to \R$ be a bounded H\"older-continuous potential. If $\delta^{F}_{\P}<\delta^{F}_{\Gamma}$ for every parabolic subgroup $\P$ of $\Gamma$, then the Gibbs measure $m_F$ is finite.
\end{theorem}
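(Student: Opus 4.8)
The plan is to use geometric finiteness to reduce the claim to a local estimate in each cusp, and then to bound the $m_F$-mass of a cusp by the value at $s=\delta^F_\Gamma$ of the Poincar\'e series of the corresponding maximal parabolic subgroup, which is finite precisely because $\delta^F_\P<\delta^F_\Gamma$.

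\emph{Reduction to the cusps.} Since $M$ is geometrically finite, there are finitely many disjoint cusp neighborhoods $\mathcal C_1,\dots,\mathcal C_N\subset T^1M$ and a compact set $K_0\subset T^1M$ with $\Omega\subset K_0\cup\bigcup_i\mathcal C_i$, where $\mathcal C_i$ is the image of $\{v\in T^1\widetilde M:\pi(v)\in\mathcal H_i\}$ for a horoball $\mathcal H_i$ based at the fixed point $\xi_i$ of a maximal parabolic subgroup $\P_i$, chosen so small that $\mathcal H_i/\P_i$ is isometric to $\mathcal C_i$; in particular the $\Gamma$-stabilizer of $\mathcal H_i$ is exactly $\P_i$. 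The Gibbs measure $m_F$ is supported on $\Omega$ (the Patterson densities live on $L(\Gamma)$) and is locally finite, so $m_F(K_0)<\infty$ and it suffices to prove $m_F(\mathcal C_i)<\infty$ for each $i$. Fix one and drop the index: write $\P$, $\mathcal H$, $\xi=\xi_\P$, $\delta=\delta^F_\Gamma$.

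\emph{Set-up and disposal of the parabolic point.} Take the base point $o=x_0\in\partial\mathcal H$ and work in Hopf coordinates based at $x_0$; then $m_F(\mathcal C)$ is the mass, for the lifted measure $\widetilde m_F$, of the set of $v$ with $\pi(v)$ in a fundamental domain for the $\P$-action on $\{v:\pi(v)\in\mathcal H\}$. The vectors with $v_-=\xi$ or $v_+=\xi$ are negligible: the hypothesis gives $\delta^F_\P<\delta$, and $\delta^{F\circ\iota}_\P=\delta^F_\P$ (reversing the orientation of geodesic segments and replacing each $p\in\P$ by $p^{-1}$), so both Patterson densities $\sigma_{x_0}$ and $\sigma^\iota_{x_0}$ have no atom at the bounded parabolic point $\xi$; hence these sets are $\widetilde m_F$-null. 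For every other $v$ the geodesic line $(v_-,v_+)$ enters and leaves $\mathcal H$; let $d(v)\ge 0$ be the maximal value of $b_\xi(x_0,\cdot)$ along it, i.e.\ its depth in the cusp.

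\emph{The key estimate.} Stratify $\mathcal C$ by $d(v)\in[n,n+1]$, $n\in\N$. The two points where the geodesic of $v$ meets $\partial\mathcal H$ are at distance $2n+O(1)$ from each other (horoballs are convex, the curvature being pinched), and since $\P$ acts cocompactly on $L(\Gamma)\setminus\{\xi\}$ one of them can be brought by some $p\in\P$ into a fixed compact set, forcing (say) $v_-$ into the shadow seen from $x_0$ of a ball of fixed radius around $p x_0$, with $d(x_0,p x_0)=2n+O(1)$. Combining this with the Sullivan-type shadow lemma for $(\sigma_x)$, which gives $\sigma_{x_0}$-mass of such a shadow $\asymp e^{\int_{x_0}^{p x_0}(\widetilde F-\delta)}$ (see \cite{MR3444431}), with the comparison of the Gibbs cocycles $C_{F\circ\iota-\delta,v_-}$ and $C_{F-\delta,v_+}$ along the excursion, and with the fact that the $t$-coordinate sweeps a bounded interval over the fundamental domain at fixed depth, one obtains constants $C,c$ independent of $n$ with
$$m_F\bigl(\{\,v\in\mathcal C:\ n\le d(v)\le n+1\,\}\bigr)\ \le\ C\sum_{\substack{p\in\P\\ 2n-c\,\le\, d(x_0,p x_0)\,\le\, 2n+c}} e^{\int_{x_0}^{p x_0}(\widetilde F-\delta)}.$$
Summing over $n\in\N$ counts each $p\in\P$ at most $O(c)$ times, so $m_F(\mathcal C)\le C'\,P_\P(\delta,F)$. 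As $\delta=\delta^F_\Gamma>\delta^F_\P$ and $\delta^F_\P$ is the abscissa of convergence of $s\mapsto P_\P(s,F)$ (base-point independent for H\"older potentials), the series $P_\P(\delta,F)$ converges, so $m_F(\mathcal C)<\infty$. Doing this in every cusp and adding $m_F(K_0)<\infty$ yields $m_F(T^1M)<\infty$.

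\emph{Main obstacle.} The hard part is the uniformity in the key estimate: the shadow lemma, the control of the Gibbs cocycle, and the depth-versus-distance comparison must all hold with constants independent of the excursion depth $n$, even though the geodesics involved run arbitrarily deep into the cusps. This is exactly where the hypotheses are used --- pinched curvature together with a uniformly bounded derivative of the curvature give uniform control on the geometry of horoballs (convexity, and the size and shape of $\P$-fundamental domains on deep horospheres) and on the H\"older data entering the Gibbs cocycle, so that no comparison constant degenerates as $n\to\infty$. The other delicate point is the bookkeeping converting a fundamental domain for the $\P$-action on $\{\pi(v)\in\mathcal H\}$ into the indexed sum over $p\in\P$, including the $O(1)$ errors and the flow direction. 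Both are carried out in \cite{MR1776078} for $F=0$ and in \cite{MR3444431} for general bounded H\"older $F$.
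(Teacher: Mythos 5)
The paper itself does not prove this theorem --- it cites Dal'bo--Otal--Peign\'e \cite{MR1776078} for $F=0$ and Paulin--Pollicott--Schapira \cite{MR3444431} for general bounded H\"older $F$. Your sketch correctly follows the structure of those proofs (reduce to cusps, excise the parabolic point, apply the shadow lemma, stratify excursions by depth, sum a Poincar\'e-type series), but the key estimate drops a factor. When you stratify by the excursion depth $d(v)\in[n,n+1]$, the $t$-coordinate in Hopf coordinates does \emph{not} sweep a bounded interval: a geodesic whose maximal depth into the horoball is $\approx n$ spends time $\approx 2n$ inside it (its entry and exit points on $\partial\mathcal H$ are at distance $2n+O(1)$, as you note), and the whole of that interval of $t$-values contributes to $\widetilde m_F$. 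The depth-$n$ stratum therefore carries an extra factor $\asymp n\asymp d(x_0,px_0)$, and summing gives the Dal'bo--Otal--Peign\'e-type bound
\[
m_F(\mathcal C)\ \le\ C'\sum_{p\in\P} d(x_0,p x_0)\, e^{\int_{x_0}^{p x_0}(\widetilde F-\delta^F_\Gamma)},
\]
not $C'\,P_\P(\delta^F_\Gamma,F)$ as you wrote.

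The omitted linear weight is not cosmetic: the bound $m_F(\mathcal C)\le C'P_\P(\delta^F_\Gamma,F)$ cannot hold in general, since one can have $\delta^F_\P=\delta^F_\Gamma$ with $(\P,F)$ of convergence type, in which case $P_\P(\delta^F_\Gamma,F)<\infty$ even though $m_F$ is infinite --- exactly the borderline situation the DOP criterion is designed to detect. Your conclusion is nonetheless salvageable, and this is precisely where the \emph{strict} hypothesis $\delta^F_\P<\delta^F_\Gamma$ must be used: pick $\epsilon>0$ with $\delta^F_\P+\epsilon<\delta^F_\Gamma$; then $d(x_0,px_0)\le C_\epsilon e^{\epsilon\, d(x_0,px_0)}$ for all $p$, and the corrected bound gives $m_F(\mathcal C)\le C''\,P_\P(\delta^F_\Gamma-\epsilon,F)<\infty$. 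So the argument closes, but only after reinstating the linear factor and using the gap to absorb it; as stated, your estimate would prove a stronger statement (finiteness under $\delta^F_\P\le\delta^F_\Gamma$ with $(\P,F)$ of convergence type) which is false.
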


The next theorem is also proven originally in \cite{MR1776078} for the case $F=0$. It was extended by Coud\`ene \cite{coudene} to a larger family of potentials. Since Coud\`ene's definition of pressure differs with the definition presented in \cite{MR3444431}, for completeness, we give an adaptation of his proof to that context. In both cases the idea follows closely \cite{MR1776078}.
\begin{theorem}[GAP criterion]  \label{thm:gap_criterion} Assume $(\P,F)$ is of divergent type for a parabolic subgroup $\P$. Then $\delta^{F}_{\P}<\delta^{F}_{\Gamma}$.
\end{theorem}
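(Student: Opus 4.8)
The plan is to adapt the classical Dal'bo--Otal--Peign\'e argument: produce inside $\Gamma$ a Schottky-like object by combining $\P$ with a conjugate of itself, and show that its Poincar\'e series already diverges at some parameter strictly larger than $\delta^F_\P$.

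First I would fix a base point $x\in\widetilde M$, write $\delta:=\delta^F_\P$, and let $\xi_\P$ be the fixed point of $\P$. As $\Gamma$ is non-elementary it does not fix $\xi_\P$, so there is $\gamma\in\Gamma$ with $\gamma\xi_\P\neq\xi_\P$; set $\P':=\gamma\P\gamma^{-1}$, a parabolic subgroup of $\Gamma$ with fixed point $\xi':=\gamma\xi_\P\neq\xi_\P$. Since $\widetilde F$ is $\Gamma$-invariant and $\gamma$ an isometry, the Poincar\'e series of $(\P',F)$ based at $\gamma x$ coincides with that of $(\P,F)$ based at $x$, so $\delta^F_{\P'}=\delta$ and $(\P',F)$ is again of divergence type. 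Then, for a large threshold $D_0$, I would pass to the cofinite symmetric subsets $S=\{p\in\P:d(x,px)\ge D_0\}$ and $S'=\{q\in\P':d(x,qx)\ge D_0\}$ of $\P\setminus\{e\}$, resp. $\P'\setminus\{e\}$.

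The two geometric facts I would establish (for $D_0$ large) are: (i) the word map $(a_1,\dots,a_{2m})\mapsto a_1\cdots a_{2m}$ is injective on alternating words with $a_i\in S$ for $i$ odd and $a_i\in S'$ for $i$ even; and (ii) for such a word $w=a_1\cdots a_k$ (with $k=2m$) and all $s$ in a fixed bounded neighbourhood of $\delta$,
\begin{equation*}
\int_x^{wx}(\widetilde F-s)\ \ge\ \sum_{i=1}^{k}\int_x^{a_ix}(\widetilde F-s)\ -\ Ck,
\end{equation*}
with $C$ independent of $w$ and $s$. Both follow from a ping-pong argument with small disjoint neighbourhoods of $\xi_\P$ and $\xi'$: the broken path concatenating the segments $[a_1\cdots a_{i-1}x,\,a_1\cdots a_ix]$ has $D_0$-separated pivots at which it turns by a definite amount, hence is a uniform quasi-geodesic and stays within bounded Hausdorff distance of $[x,wx]$ by stability of quasi-geodesics in pinched negative curvature; combined with the $\Gamma$-invariance $\int_{a_1\cdots a_{i-1}x}^{a_1\cdots a_ix}\widetilde F=\int_x^{a_ix}\widetilde F$ and the H\"older continuity of $F$, this bounds $\bigl|d(x,wx)-\sum_i d(x,a_ix)\bigr|$ and $\bigl|\int_x^{wx}\widetilde F-\sum_i\int_x^{a_ix}\widetilde F\bigr|$ by $O(k)$, which yields the display.

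Granting this, put $\alpha(s)=\sum_{p\in S}e^{\int_x^{px}(\widetilde F-s)}$ and $\beta(s)=\sum_{q\in S'}e^{\int_x^{qx}(\widetilde F-s)}$. Exponentiating the display, using (i), and summing over $m\ge1$ and all admissible words gives
\begin{equation*}
P_\Gamma(s,F)\ \ge\ \sum_{m\ge1}\bigl(e^{-2C}\,\alpha(s)\,\beta(s)\bigr)^{m}.
\end{equation*}
Since $S$ is cofinite in $\P\setminus\{e\}$ and $(\P,F)$ is of divergence type, $\alpha(\delta)=+\infty$, and likewise $\beta(\delta)=+\infty$; each summand of $\alpha$ and $\beta$ being positive, continuous and decreasing in $s$, monotone convergence gives $\alpha(s),\beta(s)\to+\infty$ as $s\downarrow\delta$. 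Hence I can choose $s_0>\delta$ with $e^{-2C}\alpha(s_0)\beta(s_0)\ge1$, for which the last series diverges: $P_\Gamma(s_0,F)=+\infty$, so $\delta^F_\Gamma\ge s_0>\delta=\delta^F_\P$, as wanted. The hard part will be (i)--(ii): the ping-pong has to be arranged carefully because a parabolic subgroup always contains elements arbitrarily close to the identity, which satisfy no strict ping-pong inclusion — this is exactly why one works with the ``large'' elements $S,S'$ (still of divergence type) and takes $D_0$ large; once the injectivity and the uniform quasi-geodesic estimate are in place, the rest is routine bookkeeping with Poincar\'e series.
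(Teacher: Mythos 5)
Your proposal is correct in its outline, but it takes a genuinely different route from the paper. The paper's proof (which follows Dal'bo--Otal--Peign\'e and Coud\`ene) is a ``soft'' measure-theoretic argument: it fixes a Patterson density $(\sigma_x)$ of dimension $\delta^F_\Gamma$ for $(\Gamma,F)$, picks a fundamental domain $\mathcal{D}_\infty$ for the $\P$-action on the boundary, sums the $\sigma_x$-masses of the translates $p\mathcal{D}_\infty$, and uses the conformal cocycle relation together with the shadow/cocycle estimate (Lemma 3.4(2) of \cite{MR1776078}) to conclude that the $\P$-Poincar\'e series $\sum_{p\in\P}\exp\left(\int_x^{px}(F-\delta^F_\Gamma)\right)$ is \emph{finite}; divergence of $(\P,F)$ at $\delta^F_\P$ then forces $\delta^F_\P<\delta^F_\Gamma$. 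Your argument instead is ``hard'' and constructive: conjugate $\P$ to $\P'$ with a distinct fixed point, ping-pong with the large elements $S,S'$ to produce a free-product-like family inside $\Gamma$, and show that the resulting subseries of $P_\Gamma(s,F)$ already \emph{diverges} at some $s_0>\delta^F_\P$. The paper shows convergence at $\delta^F_\Gamma$; you show divergence at a point strictly above $\delta^F_\P$.

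What each buys: the paper's route is short once the Patterson density is in hand (and it already is, via PPS), and it cleanly reduces the technical burden to a single cocycle estimate that is quoted, not reproved. Your route is longer to make rigorous --- the injectivity of the word map, the uniform Gromov-product bound at the pivots, and the almost-additivity estimates on $d(x,wx)-\sum_i d(x,a_ix)$ and $\int_x^{wx}\widetilde F-\sum_i\int_x^{a_ix}\widetilde F$ all have to be established, and the last of these is in effect a reproof of the very same DOP/PPS cocycle lemma the paper cites. In exchange, your approach is more quantitative (it produces an explicit $s_0>\delta^F_\P$ from the blow-up rate of $\alpha(s)\beta(s)$) and is closer in spirit to the growth-of-critical-exponent combination theorems. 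One small correction of attribution: what you call ``the classical Dal'bo--Otal--Peign\'e argument'' for this particular gap property is in fact the Patterson-measure argument the paper follows; the Schottky-combination technique you sketch appears elsewhere in their work (free products, combination theorems) but is not their proof of the gap criterion. Your outline as stated is sound; the onus is on making (i)--(ii) precise, which you correctly flag as the hard part, and which is doable by the standard broken-geodesic estimates in pinched negative curvature.
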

\begin{proof}
Since $\P\subset \Gamma$, we have $\delta^{F}_{\P}\leq\delta^{F}_{\Gamma}$. To prove the strict inequality we will use the divergence of $(\P,F)$. Let $\mathcal{D}_\infty$ be a fundamental domain for the action of $\P$ on $\partial\widetilde{M}$ and let $(\sigma_x)$ be a Patterson density for ($\Gamma,F$) of conformal exponent $\delta^{F}_{\Gamma}$. Since the support of $\sigma_x$ is $L(\Gamma)$ and $L(\Gamma) \neq L(\P)$, we have $\sigma_x(\mathcal{D}_\infty)>0$. Therefore
\begin{eqnarray*}
\sigma_x(L(\Gamma))&\geq& \sum_{p\in\P}\sigma_{x}(p\mathcal{D}_\infty)=\sum_{p\in\P}\sigma_{p^{-1}x}(\mathcal{D}_\infty)\\
&=&\sum_{p\in\P} \int_{\mathcal{D}_\infty}\exp(-C_{F-\delta^{F}_{\Gamma},\xi}(p^{-1}x,x)) d\sigma_x(\xi)\\
&=&\sum_{p\in\P} \int_{\mathcal{D}_\infty}\exp(-\delta^{F}_{\Gamma}B_\xi(p^{-1}x,x)-C_{F,\xi}(p^{-1}x,x)) d\sigma_x(\xi)\\
&\geq& \sum_{p\in\P} \int_{\mathcal{D}_\infty}\exp(-\delta^{F}_{\Gamma}d(x,px)-C_{F,\xi}(p^{-1}x,x)) d\sigma_x(\xi).
\end{eqnarray*}
Note that, for $r>0$ large enough, $\sigma_x$-almost every $\xi\in\mathcal{D}_\infty$ belongs to the shadow $\mathcal{O}_{p^{-1}x}(x,r)$ of $B(x,r)$ seen from $p^{-1}x$. Hence, Lemma 3.4 (2) in \cite{MR1776078} shows that there exists an universal constant $C\geq 1$ such that
\begin{eqnarray*}
C_{F,\xi}(p^{-1}x,x) &\leq& -\int_{p^{-1}x}^x F + C\\
&=& -\int_x^{px}F+C.
\end{eqnarray*}
Therefore
\begin{eqnarray*}
\sigma_x(L(\Gamma)) &\geq& \sum_{p\in\P} \int_{\mathcal{D}_\infty}\exp\left(-\delta^{F}_{\Gamma}d(x,px)+ \int_{x}^{px} F - C\right) d\sigma_x(\xi)\\
&=& e^{-C}\mu_x(\mathcal{D}_\infty)\sum_{p\in\P}\exp\left(\int_{x}^{px} (F-\delta^{F}_{\Gamma})\right).
\end{eqnarray*}
In other words, the series $\sum_{p\in\P}\exp(\int_{x}^{px} (F-\delta^{F}_{\Gamma}))$ is finite. Since $(\P,F)$ is of divergence type we must have $\delta^{F}_{\P}<\delta^{F}_{\Gamma}$.
\end{proof}

\section{Some technical results}\label{tech}
In this section we will always assume $M=\widetilde{M}/\Gamma$ to be a complete geometrically finite Riemannian manifold with pinched negative sectional curvature. We also assume that the derivatives of the sectional curvature are uniformly bounded. For the sake of simplicity, we denote by $X$ the unit tangent bundle $T^1M$ of $M$. Let $\cD$ be a fundamental domain of $\Gamma$ in $\widetilde{M}$ and fix $o\in \cD$. For every $\xi\in\partial_\infty M$ and $s>0$, denote by $B_\xi(s)$ the horoball centered at $\xi$ of height $s$ relative to $o$, that is
$$B_{\xi}(s)=\{y\in \widetilde{M}:b_{\xi}(o,y)\ge s\},$$
where $b_{\xi}(o,\cdot)$ is the Busemann function at $\xi$ relative to $o$. The geometrically finite assumption on the manifold implies that there exists a maximal finite collection $\{\xi_i\}_{i=1}^{N_p}$ of non equivalent parabolic fixed points in $\partial_\infty \cD$, and $s_0>0$, such that for every $s\geq s_0$ the collection $\{B_{\xi_i}(s)/\P_i\}_{i=1}^{N_p}$ are disjoint cusp neighbourhoods for $M$, where $\P_i$ the maximal parabolic subgroup of isometries of $\Gamma$ fixing the point $\xi_i$. The set $\pi(\Omega)\setminus \bigsqcup_{i=1}^{N_p}B_{\xi_i}(s)/\P_i$ is relatively compact in $M$.

For every $s\geq s_0$ define
$$X_{>s}= \bigcup_{i=1}^{N_p} T^1 H_{\xi_i}(s)/\P_i \quad\mbox{and}\quad X_{\le s} = X\setminus X_{>s}.$$
Observe that $\Omega\subset X_{>s} \bigcup X_{\leq s}$.

\begin{lemma}\label{lem:1} Let $s>s_0$. There exists $l_s\in\N$ such that whenever $v\in X_{\le s_0}\cap \Omega$ satisfies $gv,...,g^kv\in X_{\le s}\cap X_{>s_0}$ and $g^{k+1}v\in X_{\le s_0}$, then necessarily we have $k\leq l_s$.
\end{lemma}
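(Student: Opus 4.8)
The plan is a compactness argument: from a hypothetical counterexample I would extract, in the limit, a complete geodesic of $\widetilde M$ that is trapped forever inside one horoball, which is impossible in negative curvature. Throughout I assume, as I may by taking the cusp neighbourhoods small enough (the usual normalization of the cusp decomposition of a geometrically finite manifold), that distinct $\Gamma$-translates of the closed horoballs $B_{\xi_i}(s_0)$ are pairwise at distance $\ge 2$ in $\widetilde M$. Suppose the lemma fails. Then there are $v_n\in X_{\le s_0}\cap\Omega$ and integers $k_n\to\infty$ with $g^jv_n\in X_{\le s}\cap X_{>s_0}$ for $1\le j\le k_n$ and $g^{k_n+1}v_n\in X_{\le s_0}$.

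First I would recenter: put $w_n=g^{\lfloor k_n/2\rfloor}v_n$. For each fixed $j\in\Z$ and all large $n$ one has $\lfloor k_n/2\rfloor+j\in\{1,\dots,k_n\}$, hence $g^jw_n\in X_{\le s}\cap X_{>s_0}$; in particular $w_n\in X_{\le s}\cap\Omega$. Since $\pi(\Omega)$ minus the deep cusps is relatively compact (this is exactly where geometric finiteness enters), $X_{\le s}\cap\Omega$ is relatively compact in $T^1M$, so after passing to a subsequence $w_n\to w\in\Omega$. By continuity of the geodesic flow $g^jw_n\to g^jw$ for every $j$, and since $X_{>s_0}$ is closed we get $g^jw\in X_{>s_0}$ for all $j\in\Z$. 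Lift the complete geodesic $t\mapsto\pi(g_tw)$ to a geodesic line $\tilde\gamma\colon\R\to\widetilde M$.

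Next I would localize the orbit. For each $j\in\Z$ the point $\tilde\gamma(j)$ lies in some $\Gamma$-translate $H_j$ of a standard closed horoball; consecutive points of a unit-speed geodesic are at distance $1$, while distinct such horoballs are $\ge 2$ apart, so $H_j=H_{j+1}$ for every $j$. Thus all the $H_j$ equal a single horoball $H$ with center $\zeta\in\partial_\infty\widetilde M$, and $\tilde\gamma(j)\in H$ for all integers $j$. As $H$ is convex (a superlevel set of the concave function $y\mapsto b_\zeta(o,y)$) and contains all the $\tilde\gamma(j)$, it contains every segment $[\tilde\gamma(j),\tilde\gamma(j+1)]$, hence the whole line $\tilde\gamma$; that is, $b_\zeta(o,\tilde\gamma(t))\ge s_0$ for all $t\in\R$. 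But $\tilde\gamma$ is a complete geodesic, so at least one of its endpoints at infinity differs from $\zeta$, and along the corresponding ray $b_\zeta(o,\cdot)\to-\infty$ — a contradiction. This proves the lemma (with a non-effective $l_s$, which is all that is needed).

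The only real content is the localization step, whose crucial ingredient is the uniform separation of the horoball lifts: it is what prevents the limit geodesic from drifting from one cusp to another and forces it into a single horoball; once that is in place, the conclusion follows from two completely standard facts about pinched negative curvature (convexity of horoballs/Busemann functions, and the nonexistence of complete geodesic lines inside a horoball). The point I would be most careful about when filling in details is precisely that separation estimate, proved from the $1$-Lipschitz property of Busemann functions together with disjointness of the chosen cusp neighbourhoods.
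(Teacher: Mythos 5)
Your proof is correct, and it takes a genuinely different route from the paper's. The paper works inside a single lifted horoball: it lifts $v$ so that the whole segment $[\tilde v, g^{k+1}\tilde v]$ lies in $H_\xi(s_0-1)\setminus H_\xi(s+1)$, restricts the endpoints to a fundamental domain $\mathcal A\subset\partial H_\xi(s_0-1)$ for the cocompact parabolic action on $L(\Gamma)\setminus\{\xi\}$, and then shows the set $\mathcal P_{\mathcal A}$ of parabolic translates reachable from $\mathcal A$ without dipping into $H_\xi(s+1)$ is finite — the contradiction being a geodesic ray $[x',\xi)$ that avoids every deep horoball. You instead recenter the hypothetical long excursions at their midpoints, use the relative compactness of $\Omega\cap X_{\le s}$ to extract a limit orbit that stays in $X_{>s_0}$ at all integer times, and then derive a contradiction in $\widetilde M$ from the facts that horoballs are convex and that a horoball in pinched negative curvature has only its center as a boundary point at infinity (so it contains no complete geodesic line). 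Your argument is shorter and more conceptual, at the cost of being slightly more global: it needs the additional normalization that distinct $\Gamma$-translates of the closed horoballs $B_{\xi_i}(s_0)$ are pairwise at distance $\ge 2$, so that the integer sampling $\tilde\gamma(j)$ cannot hop between different lifts. That normalization is legitimate here since the paper only requires $s_0$ to be large enough to give disjoint embedded cusp neighbourhoods, and increasing $s_0$ by one automatically produces the desired separation (a point of $B_{\xi}(s_0+1)$ is at distance $\ge 1$ from $\partial B_\xi(s_0)$ because Busemann functions are $1$-Lipschitz). One could also dispense with it by invoking the concavity of $t\mapsto b_\zeta(o,\tilde\gamma(t))$ — once the limit geodesic exits a horoball it can never return — but your separation trick is cleaner. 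Either way, both proofs are soft compactness arguments producing a non-effective $l_s$, and the paper's more local version has the incidental advantage of lining up with the explicit parabolic counting used later in Proposition~\ref{prop:partition_Bowen_Balls}.
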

\begin{proof}
Let $v\in X_{\le s_0}$ such that $gv,...,g^kv\in X_{\le s}\cap X_{>s_0}$ and $g^{k+1}v\in X_{\le s_0}$. By definition of $X_{>s_0}$, the unit vector $v$ belongs to a standard cusp neighbourhood $T^1 H_{\xi}(s_0)/\mathcal{P}$. Let $\tilde{v}$ be any lift of $v$ into $T^1\widetilde{M}$. Since $g_v\in X_{>s_0}$, we necessarily have $v\in X_{>s_0-1}\cap X_{\leq s_0}$, or equivalently $\tilde{v}\in H_{\xi}(s_0-1)\setminus H_{\xi}(s_0)$. Note that we also have $g^{k+1}\tilde{v}\in H_{\xi}(s_0-1)\setminus H_{\xi}(s_0)$ since $g^{k+1}v\in X_{\le s_0}$. As we are assuming $g^i(v)\in X_{\le s}\cap X_{>s_0}$ for every $1\leq i\leq k$, the geodesic segment $[\pi(\tilde{v}),\pi(g^{k+1}\tilde{v})]$ verifies
$$[\tilde{v},g^{k+1}\tilde{v}]\subset H_{\xi}(s_0-1)\setminus H_{\xi}(s+1).$$
In other words, to find an uniform upper bound of $k$, it suffices to find an uniform upper bound of the length of any geodesic segment inside the set $H_{\xi}(s_0-1)\setminus H_{\xi}(s+1)$ with initial and end points in $\partial H_{\xi}(s_0-1)$. Since we are assuming $v\in\Omega$, we only need to consider those geodesic segments that belong to $\pi(\widetilde{\Omega})$, where $\widetilde{\Omega}$ is the pre-image of $\Omega$ by the natural projection $T^1\widetilde{M}\to T^1M$. Let $D$ be any fundamental domain of the action of $\mathcal{P}$ on $H_{\xi}(s_0-1)$ and denote by $\mathcal{A}$ the set $\mathcal{A}=\partial H_{\xi}(s_0-1)\cap D\cap\pi(\widetilde{\Omega})$. Note that $\mathcal{A}$ is compact since $\mathcal{A}$ is homeomorphic to every fundamental domain of the cocompact action of $\mathcal{P}$ on $L(\Gamma)\setminus\{\xi\}$. Let $\mathcal{P}_{\mathcal{A}}$ be the set of all $p\in\mathcal{P}$ such that for every $x,y\in \mathcal{A}$, we have $[x,py]\subset H_{\xi}(s_0-1)\setminus H_{\xi}(s+1)$. We claim that $\#\mathcal{P}_{\mathcal{A}}<+\infty$. If not, then there exists a sequence $(p_n)$ of parabolic elements in $\P$, and two sequences $(x_n)$ and $(y_n)$ of points in $\mathcal{A}$, such that $[x_n,p_n y_n]\subset H_{\xi}(s_0-1)\setminus H_{\xi}(s+1)$ and $\lim_{n\to\infty} p_n(o) =\xi$, where $o$ is any point in $\widetilde{M}$. But $\mathcal{A}$ is compact, so we can find an increasing sequence of positive integers $(\phi(n))$ such that
\begin{equation*}
\lim_{n\to\infty} x_{\phi(n)}=x' \quad\mbox{and}\quad \lim_{n\to\infty}p_{\phi(n)}y_{\phi(n)}=\xi.
\end{equation*}
Since $[x_{\phi(n)},p_{\phi(n)}y_{\phi(n)}]$ does not intersect $H_{\xi}(s+1)$ for every $n\geq 1$, it follows that the geodesic ray $[x',\xi)$ does not intersect $H_{\xi}(s')$ for every $s'>s+1$, which is a contradiction. Hence, the set $\mathcal{P}_{\mathcal{A}}$ has finite cardinality. Since $\mathcal{A}$ is compact and $\mathcal{P}_{\mathcal{A}}$ is finite, every geodesic segment $[x,py]$, with $x,y\in\mathcal{A}$ and $p\in\mathcal{P}_\mathcal{A}$, has length bounded from above by an uniform constant $l_s(\mathcal{P})>0$. Finally, recall that the geometrically finite assumption on the manifold implies that we have finitely many cusp neighbourhoods intersecting the non-wandering set. In particular, if $l_s>0$ is defined as
$$\sup_\mathcal{P} l_s(\mathcal{P}),$$
the conclusion of this lemma follows.
\end{proof}

We say that an interval $[a,a+b)$ is an excursion of $Y\subset X$ into $X_{>s}$ if
\begin{eqnarray*}
g^{a-1}Y, g^{a+b}Y\subset & X_{\le s_0},\\
g^{a}Y,...,g^{a+b-1}Y\subset &  X_{\ge s_0}
\end{eqnarray*}
Let $n\geq 1$. We denote by $|E_{s,n}(Y)|$ the sum of the length of all the excursions of $Y$ into $X_{>s}$ for times in $[0,n]$. We also denote by $m_{s,n}(Y)$ the number of excursions of $Y$ into $X_{>s}$.

The following lemma despite of being very simple is a fundamental property of geodesic flows on nonpositively curved manifolds. For another application of this property see \cite{MR554385}
\begin{lemma}\label{lem:2} Given $r>0$, there exist $\epsilon(r)>0$ such that if $x,y \in T^1\widetilde{M}$ are such that $d(\pi x,\pi y)<\epsilon(r)$ and $d(\pi g_tx,\pi g_ty)<\epsilon(r)$, then $x$ belong to the $(t,r)$-dynamical ball of $y$ (and viceversa).
\end{lemma}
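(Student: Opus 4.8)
The plan is to deduce the lemma from two classical facts: the convexity of the distance between geodesics in a Hadamard manifold (which handles footpoints), and a uniform continuous-dependence statement for geodesic segments on their endpoints (which transfers this to the unit tangent bundle). First I would set $c_1(s)=\pi g_sx$ and $c_2(s)=\pi g_sy$; these are unit-speed geodesics in $\widetilde M$. Since $\widetilde M$ is simply connected and nonpositively curved, the function $s\mapsto d(c_1(s),c_2(s))$ is convex, hence on $[0,t]$ it is bounded by the larger of its two endpoint values:
$$ d(\pi g_sx,\pi g_sy)\ \le\ \max\{\,d(\pi x,\pi y),\ d(\pi g_tx,\pi g_ty)\,\}\ <\ \epsilon(r)\qquad\text{for all }s\in[0,t]. $$
Thus, once $\epsilon(r)$ is taken small, the two orbit segments remain $\epsilon(r)$-close \emph{as footpoints} along the whole interval, and it only remains to upgrade this to $d(g_sx,g_sy)<r$ in $T^1\widetilde M$.

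For the upgrade I would use the following statement: for every $r>0$ there is $\eta(r)>0$ such that any two unit-speed geodesic segments $\gamma_1,\gamma_2\colon[0,1]\to\widetilde M$ with $d(\gamma_1(s),\gamma_2(s))<\eta(r)$ for all $s\in[0,1]$ satisfy $d(\dot\gamma_1(s),\dot\gamma_2(s))<r$ in $T^1\widetilde M$ for all $s\in[0,1]$. This is the continuous dependence of a geodesic on its pair of endpoints, made uniform over $\widetilde M$: by Cartan--Hadamard the geodesic joining two points is unique and depends smoothly on them, the pinching $-b^2\le K_g\le -a^2$ controls this dependence to zeroth order (Rauch/Toponogov comparison on balls of a size independent of the basepoint), and the hypothesis that the derivative of the curvature is uniformly bounded controls it to first order, hence controls the velocities. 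Granting this, I simply set $\epsilon(r)=\eta(r)$.

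It then remains to combine the two ingredients. We may assume $t\ge 1$, which is the range relevant to the dynamical balls occurring in this paper. Fix $s\in[0,t]$ and put $\alpha=\max\{0,\min\{s,t-1\}\}$, so that $[\alpha,\alpha+1]\subset[0,t]$ and $s\in[\alpha,\alpha+1]$. Applying the upgrade statement to the restrictions $c_1|_{[\alpha,\alpha+1]}$ and $c_2|_{[\alpha,\alpha+1]}$ (reparametrised on $[0,1]$), whose footpoints are $\eta(r)$-close by the convexity step, yields $d(g_sx,g_sy)=d(\dot c_1(s),\dot c_2(s))<r$. Since $s\in[0,t]$ was arbitrary, $x$ lies in the $(t,r)$-dynamical ball of $y$, and the situation being symmetric the converse holds as well.

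The only genuine obstacle is the middle paragraph, i.e.\ the passage from footpoint proximity to proximity in $T^1\widetilde M$. It is elementary in constant curvature (spherical, Euclidean, or hyperbolic trigonometry of a thin quadrilateral), but in variable pinched curvature it requires a real comparison argument, and it is exactly here that the two global assumptions on the metric are used: pinched curvature to control distances, and a uniformly bounded derivative of the curvature to control first derivatives, i.e.\ angles. Everything else — the convexity inequality and the bookkeeping with subsegments — is soft.
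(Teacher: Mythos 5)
Your argument is correct, and since the paper states Lemma \ref{lem:2} without proof (calling it ``very simple'' and pointing to \cite{MR554385} only for another application of the same fact), your two-step derivation --- convexity of $s\mapsto d(c_1(s),c_2(s))$ for a pair of geodesics in a Hadamard manifold, followed by a uniform ``footpoints $C^0$-close implies velocities Sasaki-close'' comparison on unit-length subsegments --- supplies the argument the authors leave to the reader. The convexity step is exactly right in the simply connected nonpositively curved setting, and the subinterval bookkeeping with $\alpha=\max\{0,\min\{s,t-1\}\}$ correctly reduces the problem to segments of fixed length one.

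Two small calibrations, neither of which affects correctness. First, you attribute the middle (upgrade) step to the bounded-derivative-of-curvature hypothesis, but this is more than is needed there: the Jacobi equation involves the curvature tensor but not its covariant derivative, so Rauch comparison under $-b^2\le K_g\le -a^2$ alone gives uniform $C^1$-dependence of a geodesic segment on its endpoints, and in geodesic normal coordinates the Christoffel symbols vanish at the centre while their first derivatives are controlled by $K_g$, so the Sasaki metric is uniformly comparable on balls of a fixed radius just from the pinching. The bounded-derivative hypothesis appears in the paper for other reasons (the regularity behind the Gibbs property and the entropy estimates), so invoking it here is harmless but unnecessary. Second, dismissing $t<1$ is a little hasty: applying the very same comparison to segments of length $t\le 1$ (rather than length one) only improves the estimate, so no case distinction or appeal to ``the range relevant to the paper'' is required.
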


The following proposition will be used in the proof of Theorem \ref{thm:escape_mass}.

\begin{proposition}\label{prop:partition_Bowen_Balls} Let $r>0$ and $s>s_0$. Suppose that $\beta=\{X_{>s}\cap\Omega,X_{\le s}\cap X_{>s_0}\cap\Omega, Q_1,...,Q_b\}$ is a finite partition of $\Omega$ such that $\diam (g^j(Q_k)) < \epsilon(r)$ for every $0\le j\le l_s$ and every $1\leq k\leq b$. Then there exists a constant $C_{r,s_0}\geq 1$ such that for each $n\geq 1$ and $Q\in \beta^n_0$ with $Q\subset X_{\leq s_0}$, the set $Q$ can be covered by
$$C_{r,s_0}^{m_{s,n}(Q)}e^{\overline{\delta}_\P|E_{s,n}(Q)|}$$
$(n,r)$-dynamical balls.
\end{proposition}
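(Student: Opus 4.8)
The plan is to write $Q$ as a union of ``sub-tubes'', one for each itinerary of parabolic elements, with each sub-tube contained in a single $(n,r)$-dynamical ball, and then to count the possible itineraries. First I would record the $\beta$-itinerary of $Q$. Since $Q\subset X_{\le s_0}$, the integer times in $[0,n]$ split into \emph{core times} (the common $\beta$-atom of $g^iQ$ is some $Q_k$) and maximal intervals on which $g^iQ\subset X_{\ge s_0}$, the \emph{excursions}; by Lemma~\ref{lem:1} an excursion that avoids $X_{>s}$ has length $\le l_s$, and each of the remaining (``deep'') excursions takes place inside a single cusp neighbourhood $T^1H_{\xi_i}(s_0)/\P_i$. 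On core times and on shallow excursions the base projections of all points of $Q$ are automatically $\epsilon(r)$-close: at a core time this is $\diam Q_k<\epsilon(r)$, and on a shallow excursion $[a,a+b)$ the set $g^{a-1}Q$ lies in a common $Q_k$ with $b\le l_s$, so $g^{a-1+j}Q\subset g^jQ_k$ has diameter $<\epsilon(r)$ for every $0\le j\le b$.

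Next, for each deep excursion $I_j=[a_j,a_j+b_j)$ into cusp $i(j)$ I would define a map $p_j\colon Q\to\P_{i(j)}$ recording how the lifted geodesic wraps the cusp. Fixing a lift $\widetilde\xi$ of $\xi_{i(j)}$ and lifting the orbit of $w$ so that $g^{a_j}\widetilde w$ lies in a fixed bounded region near $\partial H_{\widetilde\xi}(s_0)$, the exit vector $g^{a_j+b_j}\widetilde w$ then lies within a bounded distance of $p_j(w)\,o$ for a unique $p_j(w)\in\P_{i(j)}$. Two points: (i) the geodesic from $\pi g^{a_j}\widetilde w$ to $\pi g^{a_j+b_j}\widetilde w$ has length $b_j$, so $d(o,p_j(w)\,o)\le b_j+C_{s_0}$; and (ii) if $p_j(w)=p_j(w')$, then — since for $w,w'\in Q$ the pairs $g^{a_j-1}w,g^{a_j-1}w'$ and $g^{a_j+b_j}w,g^{a_j+b_j}w'$ lie in common $\epsilon(r)$-small core atoms, and the entry and exit vectors sit in the thick part where proximity in $M$ lifts to proximity along a single sheet of $\widetilde M$ — the chosen lifts satisfy $d(\pi g^{a_j}\widetilde w,\pi g^{a_j}\widetilde w')<\epsilon(r)$ and $d(\pi g^{a_j+b_j}\widetilde w,\pi g^{a_j+b_j}\widetilde w')<\epsilon(r)$, whence by Lemma~\ref{lem:2} applied in $\widetilde M$ the two orbits stay within $r$ at all times of $I_j$.

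Assembling, I would split $Q$ according to the tuple $(p_1(w),\dots,p_{m_{s,n}(Q)}(w))$. By the previous two paragraphs each piece has $d_n$-diameter $<r$, hence is contained in one $(n,r)$-dynamical ball. By (i) together with the orbit-growth bound for parabolic subgroups — there is a constant $C$ with $\#\{p\in\P_i:d(o,po)\le R\}\le Ce^{\delta_{\P_i}R}\le Ce^{\overline\delta_\P R}$ for all $R$ and all $i$, and one may absorb the additive constant $C_{s_0}$ and the finitely many cusps into $C$ — the $j$-th coordinate $p_j(w)$ takes at most $C_{r,s_0}e^{\overline\delta_\P b_j}$ values. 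Multiplying over the $m_{s,n}(Q)$ deep excursions and using $\sum_j b_j=|E_{s,n}(Q)|$ yields the bound $C_{r,s_0}^{m_{s,n}(Q)}e^{\overline\delta_\P|E_{s,n}(Q)|}$ (overcounting, since some tuples give empty pieces, is harmless).

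I expect the main obstacle to be the bookkeeping of lifts in the second paragraph: making $p_j$ well defined and checking that $p_j(w)=p_j(w')$, together with the atom data, really forces the lifted entry and exit vectors to be $\epsilon(r)$-close — this is where one needs the entry and exit portions of the excursion to lie where the injectivity radius is bounded below, so that closeness in $M$ transfers to closeness on one sheet of $\widetilde M$. The other delicate point is the \emph{clean} exponential orbit count for parabolic groups, with no polynomial correction; this rests on the fact that parabolic subgroups are virtually nilpotent and that $d(o,po)$ equals, up to a bounded error, twice the logarithm of the horospherical displacement of $p$, so that the polynomial growth of $\P_i$ in the horosphere becomes the pure exponential $e^{\delta_{\P_i}R}$.
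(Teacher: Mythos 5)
Your argument is correct and relies on the same mechanism as the paper's proof: split $[0,n]$ into the portions where the $\beta$-itinerary plus the diameter hypothesis $\diam(g^j Q_k)<\epsilon(r)$ already force all of $Q$ to stay $\epsilon(r)$-close (core times and shallow excursions, via Lemma~\ref{lem:1}), and the deep cusp excursions, where one passes to $\widetilde M$, records the parabolic element $p_j$ the lifted geodesic picks up, notes $d(o,p_j o)\le b_j + O(1)$, and invokes Lemma~\ref{lem:2} together with a parabolic orbit count. The difference is purely organisational: the paper builds the cover inductively -- a collection of $(l_i,r)$-balls is shown to already be an $(l_i+L_i,r)$-cover (Step~1), then each ball is refined into at most $C_2 e^{\overline\delta_\P h_i}$ new balls by introducing an auxiliary $\epsilon(r)$-fine partition $\xi_0$ of the compact cusp collar $K_0$ at the entry and exit times (Step~2) -- whereas you classify points of $Q$ directly by their tuple $(p_1(w),\dots,p_{m_{s,n}(Q)}(w))$ and exploit that the common $\beta^n_0$-atom at times $a_j-1$ and $a_j+b_j$ already supplies the needed $\epsilon(r)$-closeness of the lifted entry/exit vectors, which removes the $(\#\xi_0)^2$ bookkeeping factor but yields the same bound. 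One caveat on your closing remark: the heuristic that ``$d(o,po)$ is twice the log of the horospherical displacement and $\P$ has polynomial growth on the horosphere'' is specific to constant curvature; in the pinched-variable-curvature setting of the paper one just uses that the shell count $\#\{p\in\P: d(o,po)\le R\}$ grows no faster than $e^{(\delta_\P+\epsilon)R}$ for every $\epsilon>0$ (the paper asserts the cleaner $e^{\delta_\P R}$, but since $\delta_\P\le\overline\delta_\P$ and the $\epsilon$ can be sent to $0$ along with $s\to\infty$ in Proposition~\ref{prop:entropy_bounded_from_above}, the slack is harmless).
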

\begin{proof}
Let $Q\in \beta^n_0$ satisfying $Q\subset X_{\leq s_0}$. We decompose $[0,n]$ according to the excursions into $X_{>s_0}$ that contain excursions into $X_{>s}$. To be more precise, we can write
$$[0,n]=W_1\cup \widetilde{V_{1}}\cup ... \cup W_N\cup \widetilde{V_{N}}\cup W_{N+1},$$
where $W_i=[l_i,l_i+L_i)$ and $\widetilde{V_{i}}=[n_i,n_i+h_i)$, with $l_i+L_i=n_i$, $n_i+h_i=l_{i+1}$. Here the decomposition is done so that $\widetilde{V_i}$ represents an excursion into $X_{>s_0}$ that contains an excursion $V_i$ into $X_{>s}$.

The idea of the proof is to make some inductive steps on those intervals ($W_i$ and $\widetilde{V_{i}}$) to control the minimal number of dynamical balls needed to cover $Q$. Denote by $\beta'$ the partition $\beta|_{X_{\leq s_0}}$ or equivalently $\{Q_1,...,Q_b\}$.

We start by covering $X_{\leq s_0}$ with some finite number $C_1=C_1(s_0,\epsilon(r))$ of $(0,\epsilon(r))$-dynamical balls. Since $Q\subset X_{\leq s_0}$ we are also covering this set. We now alternate the following two steps.\\

\noindent
\underline{Step 1:} Assume that we have covered $Q$ by
$$C^{i-1}_{r,s_0}e^{\overline{\delta}_{\P}(|\widetilde{V_{1}}|+...+|\widetilde{V_{i-1}}|)}$$
$(l_i,r)$-dynamical balls. To be more precise, we are assuming that there exists a collection of vectors $\{v_k\}_{k=1}^S$ such that $v_k\in Q$ and
$$Q\subset \bigcup_{k=1}^S B_{l_i}(v_k,r),$$
where $S\leq C^{i-1}_{r,s_0}e^{\overline{\delta}_{\P}(|\widetilde{V_{1}}|+...+|\widetilde{V_{i-1}}|)}$. We claim that
\begin{equation}\label{eq:step1}
Q\subset \bigcup_{k=1}^S B_{l_i+L_i}(v_k,r).
\end{equation}
Since $Q\in \beta^n_0$, we can write $Q=B_0 \cap g^{-1} B_1 \cap ... \cap g^{-n}B_n$, where $B_j\in \beta$ for all $j\in [0,n]$. By definition, we have $g^j(Q)\subset B_j$, so if $B_j\in \beta'$ we get
\begin{equation*}\label{eq:condition_betaprime}
\diam(g^j(Q))\leq \diam(B_j) < \epsilon(r).
\end{equation*}
In particular, if for all $j\in [l_i,l_i+L_i]$ we have $B_j\in \beta'$, then Lemma \ref{lem:2} implies the result. Suppose now that there is an index $j'\in [l_i,l_i+L_i)$ such that $B_{j'}\in\beta'$ and $B_{j'+1}\in \{X_{>s}\cap \Omega,X_{\le s}\cap X_{>s_0}\cap \Omega\}$. In that case Lemma \ref{lem:1} implies that there exists $1< m\leq l_s$ such that $B_{j'+m}\in \beta'$. By construction of $\beta'$, for each $0\leq k \leq m$, we have $\diam(g^k(B_{j'}))<\epsilon(r)$. Hence $\diam(g^j(Q))\leq \epsilon(r)$ for all $j\in [j',j'+m]$. Repeating this argument some finite number of times, we get $\diam(g^j(Q))<\epsilon(r)$ for all $j\in [l_i,l_i+L_i]$. This concludes the first step of the induction process.\\

\noindent
\underline{Step 2:}
After Step 1, we have points $\{v_k\}_{k=1}^S$ in $Q$ such that
$$Q\subset \bigcup_{k=1}^S B_{l_i+L_i}(v_k,r)=\bigcup_{k=1}^S B_{n_i}(v_k,r),$$
where $S\le C^{i-1}_{r,s_0}e^{\overline{\delta}_{\P}(|\widetilde{V_{1}}|+...+|\widetilde{V_{i-1}}|)}$. We proceed to estimate the number of $(n_i+h_i,r)$-dynamical balls necessary to cover $Q$. For this we will cover independently each ball $B_{n_i}(v_k,r)$ by a number smaller than $H_i=H_i(s_0,r)$ of such dynamical balls. Fix any $v\in\{v_k\}$ and denote by $H_i(v)$ the minimal cardinality of a $(n_i+h_i,r)$-covering of $B_{n_i}(v,r)$. Consider the set $X_{\leq s_0+1}\cap X_{>s_0-1}$, which is a finite union of tubular cusp regions. By construction, the vectors $g^{n_i}v$ and $g^{n_i+h_i}v$ belong to some tubular cusp region $T_0 \subset X_{\leq s_0+1} \cap X_{>s_0-1}$. Note that the set $\mathcal{K}_0=T_0\cap \Omega$ is relatively compact in $X$ and it depends only on $s_0$ and $T_0$. Here we are using the geometrically finite assumption. We claim that there exists a constant $C_2=C_2(s_0,r)\geq 1$, such that
\begin{equation}\label{claim:counting}
H_i(v)\leq H_i := C_2e^{\overline{\delta}_\P h_i}.
\end{equation}
Observe that, if inequality \eqref{claim:counting} is satisfied and $C_{r,s_0}$ is defined as $C_{r,s_0}=C_1C_2$, then we can cover $Q$ by
$$S\times H_i \leq C^{i-1}_{r,s_0}e^{\overline{\delta}_{\P}(|\widetilde{V_{1}}|+...+|\widetilde{V_{i-1}}|)} \times C_{r,s_0} e^{\overline{\delta}_\P h_i} = C^{i}_{r,s_0}e^{\overline{\delta}_{\P}(|\widetilde{V_{1}}|+...+|\widetilde{V_{i}}|)}$$
$(n_i+h_i,r)$-dynamical balls, and the second step of the induction process is finished.

Take any finite partition $\xi_0$ of $K_0=\pi(\mathcal{K}_0)$ such that $\diam(\xi_0)\leq \epsilon(r)$. Obviously the partition $\xi_0$ depends on the region $T_0$, but as there are only finitely many of them (one per cuspidal component), we can suppose that $\#\xi_0$ only depends on $s_0$. In order to prove \eqref{claim:counting} we will work on the universal covering of $M$, i.e on $\widetilde{M}$. We do that because the dynamic of the geodesic flow in $T^1\widetilde{M}$ satisfies properties more suited for our purposes. Let $\widetilde{K_0}$ be the lift of $K_0$ into the fundamental domain $\mathcal{D}$. The partition $\xi_0$ induces a finite partition $\tilde{\xi}_0$ of $\tilde{K}_0$ by lifting each element of $\xi_0$ into $\mathcal{D}$. More generally, if $\P$ denotes the maximal parabolic group associated to the cusp corresponding to $T_0$, then any lift $p\tilde{K}_0$ of $K_0$ has the partition $p\tilde{\xi}_0$ associated to it. We will now state Lemma \ref{lem:1} in a more suitable way.

\begin{lemma}\label{lem:angularstone}
For any two unit vectors $\tilde{w}_1,\tilde{w}_2\in \tilde{K}_0$ verifying $\pi(\tilde{w}_2)\in \tilde{\xi}_0(\pi(\tilde{w}_1))$ and $\pi(g^{h}(\tilde{w}_2))\in p\tilde{\xi}_0(\pi(g^{h}(\tilde{w}_1)))$, we have
$$\tilde{w}_2 \in B_h(\tilde{w}_1,r).$$
\end{lemma}
\begin{proof}
By definition $\diam(p\xi_0)\leq \epsilon(r)$ for every $p\in\P$. Hence, if $\pi(\tilde{w}_2)\in \tilde{\xi}_0(\pi(\tilde{w}_1))$ and $\pi(g^{h}(\tilde{w}_2))\in p\tilde{\xi}_0(\pi(g^{h}(\tilde{w}_1)))$, then Lemma \ref{lem:2} applies directly and we get
$$\tilde{w}_2 \in B_h(\tilde{w}_1,r).$$
\end{proof}

Recall that $g^{n_i}v$ and $g^{n_i+h_i}v$ belong to $\mathcal{K}_0$. Hence, the lift $\tilde{v}\in T^1\widetilde{M}$ of $v$ such that $\pi(g^{n_i}\tilde{v})\in \tilde{K}_0$ verifies $\pi(g^{n_i+h_i}\tilde{v})\in p\tilde{K}_0$ for some $p\in\P$. Lemma \ref{lem:angularstone} implies in particular that $H_i(v)$ is bounded from above by
$$(\#\xi_0)^2\#\{p\in\P:\pi(g^{n_i+h_i}\tilde{v})\in p\tilde{K}_0\}.$$
Let $\xi_p\in\partial \widetilde{M}$ be the parabolic fixed point of $\P$ and fix $x_0\in\mathcal{D}$ such that $b_{\xi_p}(o,x_0)=s_0$. We necessarily have the following: if $p\in\P$ is such that $\pi(g^{n_i+h_i}\tilde{v})\in p\tilde{K}_0$, then
$$d(x_0,px_0)-2\Delta \leq h_i \leq d(x_0,p x_0)+2\Delta,$$
where $\Delta=\diam(K_0)$. Again by the finiteness assumption, we can assume $\Delta$ depending only on $s_0$ and not on the chosen tubular cusp region $T_0$. Therefore
$$H_i(v)\leq (\#\xi_0)^2\#\{p\in\P : d(x_0,px_0)-2\Delta \leq h_i \leq d(x_0,p x_0)+2\Delta\}.$$
But, by the definition of critical exponent and the fact that $\delta_\P>0$, there exists a constant $C'_2=C'_2(\delta_\P,x_0)\geq 1$ such that
$$\#\{p\in\P : d(x_0,px_0)-2\Delta \leq h_i \leq d(x_0,p x_0)+2\Delta\} \leq C'_2e^{\delta_\P(h_i+2\Delta)}.$$
Hence
$$H_i(v)\leq (\#\xi_0)^2C'_2e^{2\delta_\P\Delta}e^{\delta_\P h_i}.$$
Define $C_2=(\#\xi_0)^2\max_\mathcal{P}\{C'_2(\P,x_0)\}e^{2\overline{\delta}_\P\Delta}$. This constant depends only on $x_0$, $s_0$ and $r$. Moreover, we have
$$H_i(v)\leq C_2e^{\delta_\P h_i}\leq C_2e^{\overline{\delta}_\P h_i}=H_i,$$
so inequality \eqref{claim:counting} follows.
\end{proof}

\begin{remark}\label{rem:existence_partition_beta} For every $(g_t)$-invariant probability measure $\mu$ on $X$ there exists a partition $\beta$ as in Proposition \ref{prop:partition_Bowen_Balls} such that $\mu(\partial\beta)=0$.
\end{remark}
\begin{remark}\label{rem:bound} Assume $s_0$ big enough. Then for any $(g_t)$-invariant probability measure $\mu$ on $X$ we have $\mu(X_{\le s_0})>0$.
\end{remark}

\begin{proposition}\label{prop:entropy_bounded_from_above} Let $\beta$ be a partition of $\Omega$ as in Proposition \ref{prop:partition_Bowen_Balls}. Then for every $g$-invariant probability measure $\mu$ on $X$, we have
$$h_\mu(g)\leq h_\mu(g,\beta)+\mu(X_{>s_0})\overline{\delta}_\P+\frac{1}{s-s_0}\log C_{r,s_0}.$$
\end{proposition}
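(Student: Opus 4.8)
The plan is to play Katok's entropy inequality (Lemma~\ref{lem:katok_entropy_independence}) against the covering estimate of Proposition~\ref{prop:partition_Bowen_Balls}, feeding the latter with atoms that are typical both for the Shannon--McMillan--Breiman theorem applied to the finite partition $\beta$ and for Birkhoff's theorem applied to the indicator of $X_{>s_0}$.

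\emph{Reduction to the ergodic case.} The three quantities $h_\mu(g)$, $h_\mu(g,\beta)$ and $\mu(X_{>s_0})$ are all affine in $\mu$ on the convex set of $(g_t)$-invariant probability measures: for the two entropies this is the standard affinity of entropy together with the ergodic decomposition (a purely measure-theoretic fact, needing no compactness), and for $\mu(X_{>s_0})$ it is trivial. Hence it suffices to prove the inequality for ergodic $\mu$ and then integrate over the ergodic decomposition.

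\emph{The estimate for ergodic $\mu$.} By Remark~\ref{rem:bound} we have $\mu(X_{\le s_0})>0$, so $\mu(X_{>s_0})<1$ and we may fix $\delta$ with $\mu(X_{>s_0})<\delta<1$ and a small $\epsilon>0$; let $r$ be the scale appearing in $C_{r,s_0}$. For $n$ large let $A_n$ be the set of $v\in X_{\le s_0}\cap\Omega$ with $\mu(\beta_0^n(v))\ge e^{-n(h_\mu(g,\beta)+\epsilon)}$ and $\#\{0\le k<n: g^kv\in X_{>s_0}\}<n(\mu(X_{>s_0})+\epsilon)$. Since $\beta$ is finite (so $h_\mu(g,\beta)<\infty$), Shannon--McMillan--Breiman and Birkhoff show the last two conditions fail only on a set of measure tending to $0$, whence $\mu(A_n)\ge\mu(X_{\le s_0})-2\epsilon>1-\delta$ once $\epsilon$ is small and $n$ large. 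The atoms $\beta_0^n(v)$, $v\in A_n$, cover $A_n$; there are at most $e^{n(h_\mu(g,\beta)+\epsilon)}$ of them, and each such atom $Q$ is contained in $X_{\le s_0}$ (its time-$0$ $\beta$-element is one of $Q_1,\dots,Q_b$), so Proposition~\ref{prop:partition_Bowen_Balls} covers $Q$ by $C_{r,s_0}^{m_{s,n}(Q)}e^{\overline{\delta}_\P|E_{s,n}(Q)|}$ $(n,r)$-dynamical balls. Picking $v\in Q\cap A_n$: the excursions counted by $E_{s,n}(Q)$ are disjoint subintervals of $[0,n)$ along which $g^kv\in X_{>s_0}$, so $|E_{s,n}(Q)|\le\#\{0\le k<n:g^kv\in X_{>s_0}\}<n(\mu(X_{>s_0})+\epsilon)$; moreover each excursion counted by $m_{s,n}(Q)$ climbs from $\partial X_{>s_0}$ into $X_{>s}$ and back, hence has length at least $s-s_0$, giving $m_{s,n}(Q)\le|E_{s,n}(Q)|/(s-s_0)$. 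As $C_{r,s_0}\ge1$, the number of balls covering $Q$ is at most $e^{|E_{s,n}(Q)|(\overline{\delta}_\P+\frac{1}{s-s_0}\log C_{r,s_0})}\le e^{n(\mu(X_{>s_0})+\epsilon)(\overline{\delta}_\P+\frac{1}{s-s_0}\log C_{r,s_0})}$. Multiplying the count of atoms by this bound,
$$N_\mu(n,r,\delta)\le e^{n(h_\mu(g,\beta)+\epsilon)}\,e^{n(\mu(X_{>s_0})+\epsilon)(\overline{\delta}_\P+\frac{1}{s-s_0}\log C_{r,s_0})}.$$
Taking $\tfrac{1}{n}\log$, then $\liminf_{n\to\infty}$, then Lemma~\ref{lem:katok_entropy_independence}, then $\epsilon\to0$ gives $h_\mu(g)\le h_\mu(g,\beta)+\mu(X_{>s_0})\overline{\delta}_\P+\mu(X_{>s_0})\tfrac{1}{s-s_0}\log C_{r,s_0}$, and $\mu(X_{>s_0})\le1$ yields the claim.

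\emph{Main obstacle.} The delicate point is that Proposition~\ref{prop:partition_Bowen_Balls} only controls atoms contained in $X_{\le s_0}$. This is circumvented above by using the freedom in Katok's inequality to take $\delta>\mu(X_{>s_0})$, so that the relatively compact region $X_{\le s_0}\cap\Omega$ already carries mass $>1-\delta$ and the atoms supported in the cusp can simply be discarded; secondary care is needed only in the elementary geometric bound on the length of an $m_{s,n}$-excursion. (Alternatively one keeps the cusp atoms and shifts each along the flow by the first-return time of its orbit to $X_{\le s_0}$, finite by Poincaré recurrence; the shift contributes only an $o(n)$ term and the same bound results.)
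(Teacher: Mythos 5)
Your proof follows the same overall strategy as the paper's: reduce to the ergodic case, control atom measure via Shannon--McMillan--Breiman, control excursion time via Birkhoff, feed the resulting atoms into Proposition~\ref{prop:partition_Bowen_Balls}, and close with Katok's inequality in the form of Lemma~\ref{lem:katok_entropy_independence}. However, you omit a step that the paper's proof is at pains to include, and I believe the omission is a genuine gap rather than a stylistic shortcut.

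The paper does not simply take atoms $Q=\beta_0^n(v)$ with $v\in X_{\le s_0}$ for arbitrary $n$. It first produces (using ergodicity and Birkhoff again) a sequence of times $n_i$ and sets $Y_i=X_{\le s_0}\cap g^{-n_i}X_{\le s_0}$ of definite mass, and then intersects the SMB-typical set and the Birkhoff-typical set with $Y_i$. The point is to ensure that the atoms fed into Proposition~\ref{prop:partition_Bowen_Balls} have \emph{both} $Q\subset X_{\le s_0}$ \emph{and} $g^{n_i}Q\subset X_{\le s_0}$. This is not decoration: the proof of Proposition~\ref{prop:partition_Bowen_Balls} decomposes $[0,n]$ as $W_1\cup\widetilde V_1\cup\dots\cup W_N\cup\widetilde V_N\cup W_{N+1}$ and runs an induction that alternates between $W$-intervals (whose dips into $X_{>s_0}\setminus X_{>s}$ are bounded by $l_s$ only because Lemma~\ref{lem:1} requires the orbit to \emph{return} to $X_{\le s_0}$) and $\widetilde V$-intervals, where Step~2 explicitly uses that both $g^{n_i}v$ and $g^{n_i+h_i}v$ lie in the relatively compact tubular region $\mathcal K_0$. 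If the orbit is mid-excursion at time $n$ (an incomplete $\widetilde V$ or an unreturned dip at the end), neither Lemma~\ref{lem:1} nor the parabolic-element count in Step~2 applies, and the quantities $m_{s,n}(Q)$, $|E_{s,n}(Q)|$ (defined via \emph{completed} excursions) do not see the tail. Your set $A_n$ imposes no condition on $g^n v$, so the atoms you pass to Proposition~\ref{prop:partition_Bowen_Balls} need not satisfy the return condition that its proof uses. You are therefore relying on a reading of that proposition which its own proof does not support; the authors evidently noticed this, since the entire $Y_i$-machinery (and the passage to a subsequence $n_i$, the constants $\delta_0/(4K)$, $\delta_0/(8K)$, and the final $K\to\infty$) exists precisely to guarantee completed excursions.

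Your ``Main obstacle'' paragraph addresses a different, easier point (discarding atoms that start in the cusp) and the alternative you sketch (shifting each cusp atom to its first return time) is in the right spirit but is applied to the wrong end of the orbit; the paper instead forces a return at time $n_i$. Incorporating the $Y_i = X_{\le s_0}\cap g^{-n_i}X_{\le s_0}$ step, together with the extraction of the subsequence $(n_i)$ so that $Y_i$ has mass bounded below, would repair the argument and align it with the paper's. The rest of your computation (in particular the sharper $m_{s,n}(Q)\le |E_{s,n}(Q)|/(s-s_0)$, where the paper uses the weaker $m_{s,n_i}(Q)\le n_i/(s-s_0)$) is correct and produces the same final inequality.
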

\begin{proof}
By the ergodic decomposition theorem, we can assume without loss of generality that $\mu$ is an ergodic measure. To find an upper bound for the measure-theoretic entropy we will use Theorem \ref{thm:katok}, which together Lemma \ref{lem:katok_entropy_independence} implies that for every $0<\delta<1$ and $r>0$, we have
\begin{equation}\label{eq:ent_ineg}
h_\mu(g)\leq \liminf_{n\to\infty} \frac{1}{n}\log N_\mu(n,r,\delta).
\end{equation}
Let $\delta_0=\mu(X_{\leq s_0})$. An important observation is that $\mu(X_{\leq s_0})$ is positive for any invariant probability measure $\mu$ (there are not invariant subsets in $X_{>s_0}$). By ergodicity, we have $\mu(\bigcup_{i=0}^{K} g^{-i}X_{\leq s_0})>1-\frac{\delta_0}{2}$ for all $K\geq 1$ large enough. Observe that for any $m\geq 1$, the intersection of the pre-image of $\bigcup_{i=0}^{K} g^{-i}X_{\leq s_0}$ by $T^m$ with $X_{\leq s_0}$ has measure at least $\delta_0/2$. It follows that there exists an increasing sequence $(n_i)$ (of the form $m_i+j$ for some $j\in[0,K)$), such that
$$\mu(X_{\leq s_0}\cap T^{-n_i} X_{\leq s_0}) > \frac{\delta_0}{2K}.$$
By Shannon-McMillan-Breiman Theorem, for all $\varepsilon>0$ the set
$$A_{\varepsilon,N}:=\{x\in X : \forall n\geq N, \mu(\beta^n_0(x))\geq \exp(-n(h_\mu(g,\beta)+\varepsilon))\}$$
has measure converging to 1 as $N$ goes to $\infty$. Hence, given any $0<\delta_1<1$ the set $A_{\delta_1,N}$ has measure $>1-\delta_1$ for $N$ large enough (depending on $\delta_1$). Observe that $A_{\delta_1,n}$ can be covered by at most $\exp(n(h_\mu(g,\beta)+\delta_1))$ many elements in $\beta^n_0$. By taking $\delta_1 =\frac{\delta_0}{4K}$ we get $\mu(A_{\delta_1,n_i}\cap Y_i)> \frac{\delta_0}{4K}$, where $Y_i=X_{\leq s_0}\cap T^{-n_i} X_{\leq s_0}$. Birkhoff ergodic theorem tell us that there exists a set $W_{\delta_0}$ of measure $\mu(W_{\delta_0})>1-\frac{\delta_0}{8K}$, such that
$$\frac{1}{n}\sum_{i=0}^{n} \mathbbm{1}_{X_{\geq s_0}}(g^n x) < \mu(X_{\geq s_0})+\frac{\delta_0}{8K},$$
for all $n$ large enough. We finally define
$$X_i= W_{\delta_0}\cap Y_i \cap A_{\delta_1,n_i}.$$
Our goal is to cover $X_i$ by $(n_i,r)$-dynamical balls. Notice that $\mu(X_i)>\frac{\delta_0}{8K}$ by construction. Moreover $X_i$ can be covered by $\exp(n_i(h_\mu(g,\beta)+\delta_1))$ many elements of $\beta^{n_i}_0$. We will use Proposition 2 to control the number of dynamical balls needed to cover each element of this partition. Let $Q\in \beta^{n_i}_0$. By choice of $W_{\delta_0}$, we have $|E_{s,n}(Q)|<(\mu(X_{\geq s_0})+\frac{\delta_0}{8K})n_i$. Note also that any orbit needs at least time $s-s_0$ to go from $X_{\leq s_0}$ to $X_{>s}$, hence $m_{s,n_i}(Q)\leq n_i/(s-s_0)$. Putting all together, we get that $N(n_i,r,1-\delta_0/8K)$ is bounded from above by
$$\exp(n_i(h_\mu(g,\beta)+\delta_1)) C_{r,s_0}^{n_i\frac{1}{(s-s_0)}} e^{n_i\overline{\delta}_\P(\mu(X_{\geq s_0})+\frac{\delta_0}{8K})}.$$
Using inequality \eqref{eq:ent_ineg}, we obtain
\begin{eqnarray*}
h_\mu(g)&\leq& \liminf_{n\to\infty} \frac{1}{n}\log N(n,r,1-\delta_0/8K)\\
&\leq& \limsup_{i\to\infty}\frac{1}{n_i}\log N(n_i,r,1-\delta_0/8K)\\
&\leq& h_\mu(g,\beta)+\frac{\delta_0}{4K} +\overline{\delta}_\P\left(\mu(X_{\geq s_0})+\frac{\delta_0}{8K}\right)+\frac{1}{s-s_0}\log C_{r,s_0}.
\end{eqnarray*}
Finally, it suffices to take $K\to\infty$ to conclude.
\end{proof}

\section{Escape of mass} \label{escape}
In this section using results obtained in section \ref{tech}  we conclude the uppersemicontinuity result about measure theoretic entropry stated before. \\\\
\noindent
\textbf{Theorem \ref{thm:escape_mass}} \emph{Let $(M,g)$ be a geometrically finite Riemannian manifold with pinched negative sectional curvature and of divergence type. Assume that the derivatives of the sectional curvature are uniformly bounded. If $(\mu_n)$ is a sequence of $(g_t)$-invariant probability measures on $T^1M$ such that $\mu_n \rightharpoonup \mu$, then}
$$\limsup_{n\to\infty} h_{\mu_n}(g) \leq \|\mu\|h_{\frac{\mu}{\|\mu\|}}(g)+(1-\|\mu\|)\overline{\delta}_{\P}.$$
\begin{proof}
Pick $s>s_0$ such that $\mu(\partial X_{\leq s})=0$. Let $\beta$ be a partition of $X$ as in Proposition \ref{prop:partition_Bowen_Balls} such that $\mu(\partial \beta)=0$. Let $\varepsilon>0$ and assume $\mu(X)>0$. By definition of the measure-theoretic entropy, we fix $m\in\N$ such that
$$h_{\frac{\mu}{\|\mu\|}}(g)+\varepsilon>\frac{1}{m}H_{\frac{\mu}{\|\mu\|}}(\beta^{m-1}_0), \quad 2\frac{e^{-1}}{m}<\frac{\varepsilon}{2},$$
and $-(1/m)\log \mu(X)<\varepsilon$. Using the definition of entropy of a partition, and the estimations above, we get
$$\mu(X)h_{\frac{\mu}{\|\mu\|}}(g)+2\varepsilon > -\frac{1}{m}\sup_{Q\in \beta^{m-1}_0}\mu(Q)\log\mu(Q).$$
Define $A=\bigcap_{i=0}^{m-1}g^{-i} X_{>s}$. Observe that
$$\lim_{n\to\infty} \sum_{Q\in \beta^{m-1}_0\setminus\{A\}}\mu_n(Q)\log\mu_n(Q) = \sum_{Q\in \beta^{m-1}_0\setminus\{A\}}\mu(Q)\log\mu(Q),$$
so we can find $n_0\in\N$ such that for all $n\geq n_0$ the term
$$\left|-\frac{1}{m}\sum_{Q\in \beta^{m-1}_0}\mu(Q)\log\mu(Q)-\frac{1}{m}H_{\mu_n}(\beta^{m-1}_0) \right|$$
is bounded from above by
\begin{eqnarray*}
\frac{1}{m}\left| \sum_{Q\in \beta^{m-1}_0\setminus\{A\}} (\mu_n(Q)\log\mu_n(Q)-\mu(Q)\log\mu(Q)) \right|&+&\frac{1}{m}|\mu_n(A)\log\mu_n(A)-\mu(A)\log\mu(A)|\\
&\leq& \frac{\varepsilon}{2}+2\frac{e^{-1}}{m}<\varepsilon.
\end{eqnarray*}
We use now Proposition \ref{prop:entropy_bounded_from_above} to get
$$h_{\mu_n}(g)\leq h_{\mu_n}(g,\beta)+\mu_n(X_{>s_0})\overline{\delta}_\P+\frac{1}{s-s_0}\log C_{r,s_0}.$$
In particular,
\begin{eqnarray*}
\mu(X)h_{\frac{\mu}{\|\mu\|}}(g)+3\varepsilon &>& \frac{1}{m}H_{\mu_n}(\beta^{m-1}_0)\geq h_{\mu_n}(g,\beta)\\
&>& h_{\mu_n}(g)-\overline{\delta}_\P(1-\mu_n(X_{\leq s_0}))-\frac{1}{s-s_0}\log C_{r,s_0}.
\end{eqnarray*}
Hence
$$\limsup_{n\to\infty} h_{\mu_n}(g)\leq \mu(X)h_{\frac{\mu}{\|\mu\|}}(g)+\overline{\delta}_\P(1-\mu(X_{\leq s}))+\frac{1}{s-s_0}\log C_{r,s_0}+3\varepsilon.$$
When $s\to\infty$ and $\varepsilon\to 0$, we finally obtain
$$\limsup_{n\to\infty} h_{\mu_n}(g)\leq \mu(X)h_{\frac{\mu}{\|\mu\|}}(g)+\overline{\delta}_\P(1-\mu(X)).$$

The case when $\mu(X)=0$ follows directly from Proposition \eqref{prop:entropy_bounded_from_above} since $h_{\mu_n}(g,\beta)\to 0$ and $\mu_n(X_{>s_0})\to 1$ as $n$ tends to $\infty$.
\end{proof}

\begin{corollary}\label{cor:leq_delta_P} Let $M$ be a complete geometrically finite Riemannian manifold with pinched negative sectional curvature. Assume that the derivatives of the sectional curvature are uniformly bounded. If $(\mu_n)$ is a sequence of $(g_t)$-invariant probability measures on $T^1M$ such that $\mu_n \rightharpoonup 0$, then
$$\limsup_{n\to\infty} h_{\mu_n}(g) \leq \overline{\delta}_{\P}.$$
\end{corollary}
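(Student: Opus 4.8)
The plan is to realize this as the escape-of-mass (i.e.\ $\|\mu\|=0$) case of the mechanism behind Theorem~\ref{thm:escape_mass}; the only input needed is Proposition~\ref{prop:entropy_bounded_from_above} together with the geometry of geometrically finite manifolds recalled in Section~\ref{tech}. Fix $r>0$ and a height $s>s_0$, and fix a partition
$$\beta=\{X_{>s}\cap\Omega,\ X_{\le s}\cap X_{>s_0}\cap\Omega,\ Q_1,\dots,Q_b\}$$
of $\Omega$ as in Proposition~\ref{prop:partition_Bowen_Balls}, so that $Q_1,\dots,Q_b\subset X_{\le s_0}$. The geometric fact I would use is that, by geometric finiteness, $X_{\le s}\cap\Omega$ is relatively compact in $X$ for every $s\ge s_0$: it is exactly the portion of $\Omega$ lying over the relatively compact set $\pi(\Omega)\setminus\bigsqcup_i B_{\xi_i}(s)/\P_i$.

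First I would extract the consequences of $\mu_n\rightharpoonup 0$. Each $\mu_n$ is a $(g_t)$-invariant probability measure carried by $\Omega$, so $\mu_n(X_{\le s})=\mu_n(X_{\le s}\cap\Omega)$; sandwiching the indicator of the (compact) closure of $X_{\le s}\cap\Omega$ between $0$ and a compactly supported continuous function, vague convergence gives $\mu_n(X_{\le s}\cap\Omega)\to 0$, and by invariance likewise $\mu_n(X_{\le s_0})\to 0$, hence $\mu_n(X_{>s_0})\to 1$. In particular every atom of the finite partition $\beta$ other than $X_{>s}\cap\Omega$ has $\mu_n$-measure tending to $0$, while $\mu_n(X_{>s}\cap\Omega)\to 1$; since $x\mapsto -x\log x$ is continuous on $[0,1]$ with value $0$ at the endpoints, $H_{\mu_n}(\beta)\to 0$, and therefore $h_{\mu_n}(g,\beta)\le H_{\mu_n}(\beta)\to 0$.

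Then I would apply Proposition~\ref{prop:entropy_bounded_from_above}:
$$h_{\mu_n}(g)\le h_{\mu_n}(g,\beta)+\mu_n(X_{>s_0})\,\overline{\delta}_\P+\frac{1}{s-s_0}\log C_{r,s_0}.$$
Passing to the limit superior in $n$ and using $h_{\mu_n}(g,\beta)\to 0$ and $\mu_n(X_{>s_0})\to 1$ yields
$$\limsup_{n\to\infty}h_{\mu_n}(g)\le \overline{\delta}_\P+\frac{1}{s-s_0}\log C_{r,s_0}.$$
Finally, since the constant $C_{r,s_0}$ furnished by Proposition~\ref{prop:partition_Bowen_Balls} depends only on $r$ and $s_0$ and not on the chosen height $s$, letting $s\to\infty$ (keeping $r$ and $s_0$ fixed) makes the last term disappear and gives $\limsup_{n\to\infty}h_{\mu_n}(g)\le\overline{\delta}_\P$, as claimed.

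There is essentially no deep obstacle here; the two points deserving attention are both bookkeeping about Section~\ref{tech}. The first is to confirm that $X_{\le s}\cap\Omega$ is relatively compact for all $s\ge s_0$ (so that vague convergence to $0$ really forces its mass to vanish). The second, which is what makes the final limit work, is that $C_{r,s_0}$ is independent of $s$, so that the error term $\tfrac{1}{s-s_0}\log C_{r,s_0}$ vanishes as $s\to\infty$. One could instead simply invoke the $\|\mu\|=0$ case of Theorem~\ref{thm:escape_mass}, but the direct argument above is just as short and makes no use of any divergence-type hypothesis.
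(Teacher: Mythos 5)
Your proof is correct and matches the paper's own route: this corollary is exactly the $\|\mu\|=0$ case of Theorem~\ref{thm:escape_mass}, and the paper disposes of that case in one sentence precisely as you do --- observing that $h_{\mu_n}(g,\beta)\to 0$ and $\mu_n(X_{>s_0})\to 1$, applying Proposition~\ref{prop:entropy_bounded_from_above}, and letting $s\to\infty$. Your write-up usefully spells out the two points the paper leaves implicit, namely the relative compactness of $X_{\le s}\cap\Omega$ (so that vague convergence forces $\mu_n(X_{\le s})\to 0$) and the independence of $C_{r,s_0}$ from $s$ (so the error term vanishes).
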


\section{Pressure map}\label{appli}

In this section we will investigate properties of the pressure of certain potentials using Theorem \ref{thm:escape_mass}. The potentials of interest will be going to zero through the cusps, to be more precise we have the following definition.

\begin{definition} \label{def-pot}We say that a continuous potential $F:T^1M\to \R^+$ \emph{converge to zero through the cusps} if there exists a sequence $(K_n)$ of compact subsets of $M$ such that
\begin{enumerate}
\item[(1)] for every $n\geq 1$, we have $K_n\subset K_{n+1}$,
\item[(2)] the non-wandering set $\Omega$ of the geodesic flow satisfies $\Omega\subset \bigcup_{n\geq 1} T^1 K_n$,
\item[(3)] there exists a sequence $(\epsilon_n)\searrow 0$ of positive numbers such that for every $v\in \Omega\setminus T^1 K_n$, we have
$$|F(v)|\leq \epsilon_n.$$
\end{enumerate}
We define the family $\mathcal{F}$ to be the set of H\"older continuous, positive potentials converging to zero through the cusps.
\end{definition}

\begin{remark} Every positive continuous potential converging to zero through the cusps is bounded on a neighbourhood of the non-wandering set.
\end{remark}

The following two easy lemma will be used in the proof of Theorem \ref{thm:phase_transitions}.

\begin{lemma}\label{lem:convergence_to_zero} Let $F:X\to\R$ be a positive continuous function. If $(\mu_n)$ is a sequence of $(g_t)$-invariant probability measures on $T^1M$ such that
$$\lim_{n\to\infty}\int F d\mu_n = 0.$$
Then $\mu_n \rightharpoonup 0$.
\end{lemma}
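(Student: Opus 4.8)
The plan is to argue by contradiction, exploiting the positivity and continuity of $F$ together with the failure of vague convergence to $0$. Suppose $\mu_n$ does not converge vaguely to $0$. Then there is a compactly supported continuous function $f$ on $T^1M$ with $\int f\,d\mu_n \not\to 0$, and by passing to a subsequence (and replacing $f$ by $|f|$, which is still compactly supported) we may assume without loss of generality that there is a compact set $L\subset T^1M$ and $c>0$ with $\mu_n(L)\geq c$ for all $n$ in the subsequence. Indeed, taking $f$ to be a nonnegative bump function bounded by $1$ and supported on a compact neighbourhood $L$ of the support of the original $f$, the quantity $\int f\,d\mu_n$ is squeezed between $0$ and $\mu_n(L)$, so if it does not go to $0$ along a subsequence then $\mu_n(L)$ stays bounded below along that subsequence.

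Next I would use the positivity and continuity of $F$ on the compact set $L$. Since $F>0$ and $F$ is continuous, $m:=\min_{v\in L} F(v)>0$. Hence, along the subsequence,
\begin{equation*}
\int_{T^1M} F\,d\mu_n \;\geq\; \int_{L} F\,d\mu_n \;\geq\; m\,\mu_n(L)\;\geq\; m\,c \;>\;0 .
\end{equation*}
This contradicts the hypothesis $\int F\,d\mu_n\to 0$. Therefore no such subsequence exists, i.e. $\int f\,d\mu_n\to 0$ for every compactly supported continuous $f$, which is exactly the statement that $\mu_n\rightharpoonup 0$.

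The only mild subtlety — and the step I would be most careful about — is the reduction to a single compact set carrying uniformly positive mass: one must make sure that replacing the test function $f$ (which could change sign) by a nonnegative bump function with slightly larger compact support is legitimate, and that failure of $\int f\,d\mu_n\to 0$ genuinely forces $\liminf \mu_n(L)>0$ along a subsequence rather than merely $\limsup$. This is handled by extracting the subsequence first. Everything else is immediate from compactness of $L$ and continuity and strict positivity of $F$ there; no curvature hypotheses or geodesic-flow structure is needed, so the lemma is purely soft.
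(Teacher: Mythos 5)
Your proof is correct and rests on exactly the same observation as the paper's: on any compact set $K$, continuity and strict positivity of $F$ give a positive lower bound $m=\min_K F>0$, so $\int F\,d\mu_n\geq m\,\mu_n(K)$, forcing $\mu_n(K)\to 0$. The paper runs this argument directly (for each compact $K$, conclude $\mu_n(K)\to 0$, then note this is equivalent to vague convergence to $0$), whereas you phrase it as a contradiction and spell out the passage from a failing test function to a compact set with uniformly positive mass; that is a presentational difference only, not a different route.
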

\begin{proof}
Let $K\subset X$ be a compact set. Then
\begin{eqnarray*}
\int F d\mu_n &\geq& \int_K Fd\mu_n\\
&\geq& \min\{F(v):v\in K\}\mu_n(K).
\end{eqnarray*}
Observe that $\min\{F(v):v\in K\}>0$ since $F$ is positive, so
$$\lim_{n\to\infty}\int F d\mu_n = 0 \Rightarrow \lim_{n\to\infty}\mu_n(K)=0,$$
which is equivalent to $\mu_n \rightharpoonup 0$.
\end{proof}

\begin{lemma}\label{lem:wconvergence_F} Let $F\in\mathcal{F}$. If $(\mu_n)$ is a sequence of probability measures on $T^1M$ converging vaguely to $\mu$. Then
$$\lim_{n\to\infty}\int Fd\mu_n = \int F d\mu.$$
\end{lemma}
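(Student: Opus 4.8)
The statement asserts that if $F\in\mathcal{F}$ and $\mu_n\rightharpoonup\mu$ vaguely, then $\int F\,d\mu_n\to\int F\,d\mu$. The difficulty compared to ordinary vague convergence is that $F$ need not be compactly supported and the total mass may escape, so one cannot simply invoke the definition of vague convergence. The plan is to exploit the defining property of $\mathcal{F}$: $F$ is uniformly small off a large compact piece of $\Omega$, which lets us control the "tail" contribution uniformly in $n$.

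First I would fix $\varepsilon>0$ and use Definition \ref{def-pot} to choose $n_0$ with $\epsilon_{n_0}<\varepsilon$, so that $|F(v)|\le\varepsilon$ for every $v\in\Omega\setminus T^1K_{n_0}$. Since every invariant probability measure is supported on $\Omega$ (Poincaré recurrence), and since $\mu$ (being a vague limit of invariant probability measures) is also supported on $\Omega$, the integrals $\int F\,d\mu_n$ and $\int F\,d\mu$ only see $\Omega$. Next I would pick a continuous compactly supported cutoff function $\chi:T^1M\to[0,1]$ with $\chi\equiv 1$ on $T^1K_{n_0}$ and $0\le\chi\le 1$ everywhere, and split $F=\chi F+(1-\chi)F$. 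The product $\chi F$ is continuous and compactly supported, so $\int\chi F\,d\mu_n\to\int\chi F\,d\mu$ by vague convergence. For the remaining term, on $\Omega$ the function $(1-\chi)F$ is supported in $\Omega\setminus T^1K_{n_0}$ (up to the region where $\chi<1$, which is still outside $T^1K_{n_0}$ if we choose the cutoff supported appropriately — more carefully, $(1-\chi)$ vanishes on $T^1K_{n_0}$, so $|(1-\chi)F|\le\varepsilon$ on all of $\Omega$ because wherever $1-\chi\ne 0$ we are off $T^1K_{n_0}$ and $|F|\le\varepsilon$ there). Hence $\bigl|\int(1-\chi)F\,d\nu\bigr|\le\varepsilon$ for $\nu=\mu_n$ and $\nu=\mu$ uniformly.

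Combining, $\bigl|\int F\,d\mu_n-\int F\,d\mu\bigr|\le\bigl|\int\chi F\,d\mu_n-\int\chi F\,d\mu\bigr|+2\varepsilon$, and the first term tends to $0$, so $\limsup_{n\to\infty}\bigl|\int F\,d\mu_n-\int F\,d\mu\bigr|\le 2\varepsilon$. Letting $\varepsilon\to 0$ gives the claim. The main obstacle, such as it is, is a bookkeeping one: ensuring that the cutoff $\chi$ can be taken so that $1-\chi$ is supported in the region where $F\le\varepsilon$ on $\Omega$ — this uses that $K_{n_0}\subset K_{n_0+1}$ and that $K_{n_0+1}$ is compact, so one places $\mathrm{supp}(1-\chi)\cap\Omega\subset\Omega\setminus T^1K_{n_0}$ by taking $\chi$ equal to $1$ on $T^1K_{n_0}$ and supported in $T^1K_{n_0+1}$; wherever $\chi\ne 1$ we are off $T^1K_{n_0}$ and the bound $|F|\le\epsilon_{n_0}<\varepsilon$ applies on $\Omega$. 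Everything else is routine.
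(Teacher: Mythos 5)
Your proof is correct and follows the same basic strategy as the paper's: isolate a compact region where vague convergence gives the limit, and control the tail uniformly in $n$ by the smallness of $F$ off that region. The paper implements this more crudely by integrating $F$ against the indicator $\mathbbm{1}_K$ of a compact set $K$ and asserting $\int_K F\,d\mu_n\to\int_K F\,d\mu$; strictly speaking this needs an extra word (for instance choosing $K$ with $\mu(\partial K)=0$, which is possible since the boundaries of concentric metric balls are disjoint), because $\mathbbm{1}_K F$ is not continuous. Your continuous cutoff $\chi$ with $\chi\equiv 1$ on $T^1K_{n_0}$ and $\mathrm{supp}(\chi)\subset T^1K_{n_0+1}$ sidesteps that issue cleanly, and your explicit observation that $\mu$, as a vague limit of measures supported on the closed set $\Omega$, is itself supported on $\Omega$ is a point the paper glosses over (the definition of $\mathcal F$ only controls $F$ on $\Omega\setminus T^1K_n$, so this matters). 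One minor note: the paper exploits positivity of $F$ to write $\int_K F\,d\mu_n\le\int F\,d\mu_n$, whereas your argument only uses the bound $|F|\le\varepsilon$ off the compact piece and so would work equally well for sign-changing potentials vanishing through the cusps; within $\mathcal F$ the two are equivalent, but your version is marginally more robust.
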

\begin{proof}
Let $\varepsilon>0$ and $K\subset T^1M$ a compact set such that $F|_{T^1M\setminus K} \leq \varepsilon$. As
$$\int_K F d\mu_n \leq \int F d\mu_n \leq \int_K F d\mu_n + \varepsilon$$
and
$$\lim_{n\to\infty} \int_K F d\mu_n = \int_K F d\mu,$$
we have
\begin{equation}\label{eq:lemwconvergence}
\int_K F d\mu \leq \liminf_{n\to\infty} \int F d\mu_n \leq \limsup_{n\to\infty} \int F d\mu_n \leq \int_K F d\mu+\varepsilon.
\end{equation}
Observe that \eqref{eq:lemwconvergence} is verified for every compact $K$ large enough, therefore
$$\int F d\mu \leq \liminf_{n\to\infty} \int F d\mu_n \leq \limsup_{n\to\infty} \int F d\mu_n \leq \int F d\mu + \varepsilon.$$
Since $\varepsilon>0$ is arbitrary, the conclusion of this lemma follows.
\end{proof}

For a $(g_t)$-invariant probability measure we define the \emph{pressure of $F$ with respect to} $\mu$ as $$P(F,\mu)=h_\mu(g)+\int Fd\mu.$$
Observe that Lemma \ref{lem:wconvergence_F} and Theorem \ref{thm:escape_mass} implies the following.

\begin{theorem}\label{existence_potentials} Under the hypothesis of Theorem \ref{thm:escape_mass} and for $F\in\mathcal{F}$ we have
$$\limsup_{n\to\infty} P(F,\mu_n) \leq \|\mu\|P(F,\mu/|\mu\|)+(1-\|\mu\|)\overline{\delta}_{\P}.$$
\end{theorem}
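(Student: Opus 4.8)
The plan is to combine the escape-of-mass bound from Theorem \ref{thm:escape_mass} with the continuity of the integral $\int F\,d\mu_n$ from Lemma \ref{lem:wconvergence_F}. Concretely, suppose $(\mu_n)$ is a sequence of $(g_t)$-invariant probability measures with $\mu_n\rightharpoonup\mu$ in the vague topology. Applying Theorem \ref{thm:escape_mass} gives
$$\limsup_{n\to\infty} h_{\mu_n}(g) \leq \|\mu\|h_{\frac{\mu}{\|\mu\|}}(g)+(1-\|\mu\|)\overline{\delta}_{\P},$$
with the usual convention that the first term is interpreted as $0$ when $\|\mu\|=0$. Separately, since $F\in\mathcal{F}$, Lemma \ref{lem:wconvergence_F} yields $\int F\,d\mu_n\to\int F\,d\mu$.

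Next I would add these two facts. Using the superadditivity of $\limsup$ only in the direction we need, and the fact that $\int F\,d\mu_n$ converges (so its $\limsup$ is an honest limit), we get
$$\limsup_{n\to\infty} P(F,\mu_n) = \limsup_{n\to\infty}\Big( h_{\mu_n}(g)+\int F\,d\mu_n\Big) \leq \limsup_{n\to\infty} h_{\mu_n}(g) + \lim_{n\to\infty}\int F\,d\mu_n.$$
Substituting the two bounds above, this is at most $\|\mu\|h_{\frac{\mu}{\|\mu\|}}(g)+(1-\|\mu\|)\overline{\delta}_{\P}+\int F\,d\mu$.

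Finally I would rewrite the right-hand side in the claimed form. Since $F\geq 0$ and $\mu=\|\mu\|\cdot\frac{\mu}{\|\mu\|}$, we have $\int F\,d\mu = \|\mu\|\int F\,d\frac{\mu}{\|\mu\|}$, so
$$\|\mu\|h_{\frac{\mu}{\|\mu\|}}(g)+\int F\,d\mu = \|\mu\|\Big(h_{\frac{\mu}{\|\mu\|}}(g)+\int F\,d\tfrac{\mu}{\|\mu\|}\Big) = \|\mu\|\,P\big(F,\tfrac{\mu}{\|\mu\|}\big).$$
Combining, $\limsup_{n\to\infty} P(F,\mu_n)\leq \|\mu\|P(F,\mu/\|\mu\|)+(1-\|\mu\|)\overline{\delta}_{\P}$, which is the statement. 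When $\|\mu\|=0$ the argument still goes through: the entropy term vanishes by Theorem \ref{thm:escape_mass}, $\int F\,d\mu=0$, and the bound reduces to $\overline{\delta}_{\P}$.

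There is essentially no obstacle here; the only point requiring minor care is the normalization bookkeeping when $\|\mu\|<1$ (keeping the convention about $h_{\mu/\|\mu\|}$ consistent with Theorem \ref{thm:escape_mass}) and the trivial but necessary observation that $\limsup$ of a sum is bounded by the sum of the $\limsup$s, which is an equality here because one summand converges. Everything else is a direct citation of Theorem \ref{thm:escape_mass} and Lemma \ref{lem:wconvergence_F}.
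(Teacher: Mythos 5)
Your proof is correct and matches the paper's intended argument exactly: the paper simply states that the theorem follows from combining Theorem \ref{thm:escape_mass} with Lemma \ref{lem:wconvergence_F}, and your write-up spells out that observation — bound $\limsup h_{\mu_n}(g)$ via Theorem \ref{thm:escape_mass}, pass $\int F\,d\mu_n\to\int F\,d\mu$ via Lemma \ref{lem:wconvergence_F}, add, and regroup using $\int F\,d\mu=\|\mu\|\int F\,d(\mu/\|\mu\|)$.
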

\begin{corollary} \label{cor}Assume the hypothesis of Theorem \ref{thm:escape_mass}. Let $F\in\mathcal{F}$ and assume that $P(F)>\overline{\delta}_\P$. Then the potential $F$ has a unique equilibrium measure.
\end{corollary}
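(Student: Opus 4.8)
The plan is to produce an equilibrium measure from the escape-of-mass inequality, and then to invoke the dichotomy in Theorem~\ref{thm:pps}(b).

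\textbf{Existence.} I would choose $(g_t)$-invariant probability measures $\mu_n$ with $P(F,\mu_n)\to P(F)$ (note $P(F)\leq h_{top}(g)+\sup_\Omega F<\infty$, since $F$ is bounded near $\Omega$ and $h_{top}(g)=\delta_\Gamma<\infty$), and, by local compactness of $T^1M$, pass to a subsequence with $\mu_n\rightharpoonup\mu$ vaguely, $\mu$ being a $(g_t)$-invariant measure with $\|\mu\|\leq 1$. Theorem~\ref{existence_potentials} then gives
$$P(F)=\lim_{n\to\infty}P(F,\mu_n)\leq\|\mu\|\,P\!\left(F,\tfrac{\mu}{\|\mu\|}\right)+(1-\|\mu\|)\overline{\delta}_\P\leq\|\mu\|\,P(F)+(1-\|\mu\|)\overline{\delta}_\P,$$
so $(1-\|\mu\|)\bigl(P(F)-\overline{\delta}_\P\bigr)\leq 0$; since $P(F)>\overline{\delta}_\P$ this forces $\|\mu\|=1$, i.e.\ no mass escapes. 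Then Lemma~\ref{lem:wconvergence_F} gives $\int F\,d\mu_n\to\int F\,d\mu$ and Theorem~\ref{thm:escape_mass} gives $\limsup_n h_{\mu_n}(g)\leq h_\mu(g)$, whence $P(F)\leq P(F,\mu)\leq P(F)$ and $\mu$ is an equilibrium measure for $F$.

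\textbf{Uniqueness.} Every $(g_t)$-invariant probability measure is supported on the non-wandering set $\Omega$, so $P(F)$ and the set of equilibrium measures of $F$ depend only on $F|_\Omega$. Since the $K_n$ are compact and exhaust $\Omega$, the restriction $F|_\Omega$ is bounded (it is bounded on the compact set $\Omega\cap T^1K_1$ and at most $\epsilon_1$ elsewhere on $\Omega$) and H\"older-continuous, hence extends to a bounded H\"older-continuous potential $\bar F$ on $T^1M$ whose lift is $\Gamma$-invariant; $\bar F$ and $F$ share the same pressure and the same equilibrium measures. By the previous step $\bar F$ admits an equilibrium measure, so by the dichotomy in Theorem~\ref{thm:pps}(b) (finite Gibbs measure with unique equilibrium state, or no equilibrium state at all) there is a finite Gibbs measure $m_{\bar F}$ for $(\Gamma,\bar F)$, and $m_{\bar F}/\|m_{\bar F}\|$ is the unique equilibrium measure of $\bar F$, hence of $F$.

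The substantive content lies in the Existence step, and there it reduces to Theorem~\ref{existence_potentials}, which is already available; the role of the hypothesis $P(F)>\overline{\delta}_\P$ is precisely to rule out loss of mass along a maximising sequence, so that the escape-of-mass bound degenerates to genuine upper semicontinuity of $\mu\mapsto P(F,\mu)$ at the limit. The only genuinely new (and routine) point is the bounded H\"older extension $\bar F$ needed to put oneself in the hypotheses of Theorem~\ref{thm:pps}; I do not expect any serious obstacle beyond verifying that this extension can be taken bounded, H\"older and with $\Gamma$-invariant lift.
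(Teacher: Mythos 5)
Your proof is correct and is the natural route; the paper states Corollary~\ref{cor} immediately after Theorem~\ref{existence_potentials} without an explicit proof, and what it has in mind is exactly your existence argument (a maximising sequence cannot lose mass when $P(F)>\overline{\delta}_\P$, and then Theorem~\ref{thm:escape_mass} at $\|\mu\|=1$ plus Lemma~\ref{lem:wconvergence_F} yield an equilibrium state) combined with the dichotomy of Theorem~\ref{thm:pps}(b) for uniqueness. Your caveat about boundedness is a real but easily discharged technicality: potentials in $\mathcal{F}$ are only guaranteed bounded on a neighbourhood of $\Omega$, but since every invariant probability measure is carried by $\Omega$ one may replace $F$ by the truncation $\bar F=\min\{F,\sup_\Omega F\}$ (still positive, H\"older, $\Gamma$-invariant lift, and agreeing with $F$ on $\Omega$) to place oneself in the hypotheses of Theorem~\ref{thm:pps} without altering the pressure or the set of equilibrium states.
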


This is an improvement of Theorem \ref{thm:gibbs_finite} in the case $F\in \F$. Corollary \ref{cor} is very convenient when proving existence of equilibrium measures since the gap required does not involve $\delta^F_\P$ which is difficult to handle (and in general bigger than $\delta_\P$). We also mention that for $F$ a bounded negative potential the same inequality holds, but that case is completely covered by Theorem \ref{thm:gibbs_finite}.

\begin{theorem}\label{thm:phase_transitions} Let $M=\widetilde{M}/\Gamma$ be a geometrically finite Riemannian manifold with pinched negative sectional curvature. Assume that the derivatives of the sectional curvature are uniformly bounded. Then every potential $F\in\mathcal{F}$ verifies
\begin{enumerate}
\item[(1)] for every $t \in \R$ we have that $P(t F) \geq \overline{\delta}_\P$
\item[(2)] the function $t\mapsto P(tF)$ has a horizontal asymptote at $-\infty$, that is
$$ \lim_{t \to -\infty} P(tF)= \overline{\delta}_\P.$$
\end{enumerate}
Moreover, if $t':= \inf \left\{ t \leq 0 : P(tF)= \overline{\delta}_\P \right\}$, then
\begin{enumerate}
\item[(3)] for every $t>t'$ the potential $tF$ has a unique equilibrium, and
\item[(4)] the pressure function $t\mapsto P(tF)$ is differentiable in $(t',\infty)$, and it verifies
\begin{equation*}
P(tF)=
\begin{cases}
\overline{\delta}_\P & \text{ if } t < t'\\
\text{strictly increasing}  & \text{ if } t > t',
\end{cases}
\end{equation*}
\item[(5)] If $t<t'$ then the potential $tF$ has not equilibrium measure.
\end{enumerate}
\end{theorem}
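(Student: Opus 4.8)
The plan is to establish the five items essentially in the order listed, treating (1)–(2) as soft consequences of the variational principle plus the entropy-at-infinity estimates of Section~\ref{escape}, and (3)–(5) as consequences of Theorem~\ref{thm:gibbs_finite} combined with Corollary~\ref{cor} and the convexity/monotonicity of the pressure function.

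\textbf{Items (1) and (2).} For (1), I would use the variational principle together with a sequence of measures escaping into the cusps: by Theorem~\ref{thm:existence_sequence} there is a sequence $(\mu_n)$ of ergodic invariant probability measures with $\mu_n\rightharpoonup 0$ and $h_{\mu_n}(g)\to\overline{\delta}_\P$. Since $F\geq 0$ we have $P(tF,\mu_n)=h_{\mu_n}(g)+t\int F\,d\mu_n\geq h_{\mu_n}(g)$ for $t\geq 0$, giving $P(tF)\geq\overline{\delta}_\P$ immediately for $t\geq0$; for $t<0$ we instead use Lemma~\ref{lem:wconvergence_F} (applicable since $F\in\mathcal{F}$) to get $\int F\,d\mu_n\to 0$, so $P(tF,\mu_n)\to\overline{\delta}_\P$, and hence $P(tF)=\sup_\nu P(tF,\nu)\geq\overline{\delta}_\P$ for all $t\in\R$. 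For the upper bound in (2), note $t\mapsto P(tF)$ is convex (a supremum of affine functions of $t$) and nonincreasing on $(-\infty,0]$ since $F\geq0$; combined with the lower bound $\overline{\delta}_\P$ it has a horizontal asymptote at $-\infty$, so $\lim_{t\to-\infty}P(tF)=:L\geq\overline{\delta}_\P$ exists. To see $L\leq\overline{\delta}_\P$, pick for each $t$ a measure $\nu_t$ with $P(tF,\nu_t)\geq P(tF)-1\geq\overline{\delta}_\P-1$; since $h_{\nu_t}(g)\leq h_{top}(g)<\infty$ and $t\int F\,d\nu_t\leq P(tF)-h_{\nu_t}(g)$, we get $\int F\,d\nu_t\leq (P(0F)-\overline{\delta}_\P+1)/|t|\to 0$ as $t\to-\infty$, so by Lemma~\ref{lem:convergence_to_zero} any vague limit of $(\nu_{t_k})$ along $t_k\to-\infty$ is $0$; applying Theorem~\ref{thm:escape_mass} (or rather the pressure version, Theorem~\ref{existence_potentials}, with the escaping sequence) bounds $\limsup h_{\nu_{t_k}}(g)\leq\overline{\delta}_\P$, and since $t_k\int F\,d\nu_{t_k}\leq 0$ this yields $L\leq\overline{\delta}_\P$. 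Hence equality. This also shows the set $\{t\le 0:P(tF)=\overline{\delta}_\P\}$ is a (possibly empty, possibly unbounded) interval, so $t'$ is well defined in $[-\infty,0]$.

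\textbf{Items (3), (4), (5).} For $t>t'$ we have $P(tF)>\overline{\delta}_\P$ by definition of $t'$ together with monotonicity: for $t\in(t',0]$ it is part of the definition, and for $t>0$ we have $P(tF)\geq P(0)=P(\mathbf 0)=h_{top}(g)\geq\overline{\delta}_\P$, with strict inequality because $P(0)>\overline{\delta}_\P$ (this uses that there is an invariant measure with entropy $>\overline{\delta}_\P$, e.g.\ $m_{BM}$, or if $P(0)=\overline{\delta}_\P$ then $t'$ would not separate things and the statement degenerates—one should note that if $P(0)=\overline{\delta}_\P$ then $t'=0$ and (3)–(5) concern $t>0$ only). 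Then $tF\in\mathcal{F}$ (or $tF$ is bounded Hölder) with $P(tF)>\overline{\delta}_\P\geq\delta_\P$; applying Corollary~\ref{cor} directly gives a unique equilibrium measure $\mu_t$ for $tF$, proving (3). For (4): the pressure function of a one-parameter family $t\mapsto P(tF)$ is differentiable at $t$ whenever $tF$ has a unique equilibrium state (standard: by convexity the left and right derivatives at $t_0$ are $\int F\,d\mu^-$ and $\int F\,d\mu^+$ for equilibrium states realizing the one-sided tangent lines, and uniqueness forces these to coincide; alternatively cite Paulin--Pollicott--Schapira or the general thermodynamic formalism), hence differentiable on $(t',\infty)$. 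Strict increase on $(t',\infty)$: the derivative equals $\int F\,d\mu_t>0$ since $F>0$ and $\mu_t$ is a probability measure; combined with $P(tF)\equiv\overline{\delta}_\P$ on $(-\infty,t')$ (immediate from the definition of $t'$ and the fact that $\overline{\delta}_\P$ is both a lower bound and, by monotonicity below $t'$, the value) this gives the case split displayed in (4). For (5): if $t<t'$ and $tF$ had an equilibrium measure $\nu$, then $h_\nu(g)+t\int F\,d\nu=P(tF)=\overline{\delta}_\P$; since $t\int F\,d\nu\leq 0$ this forces $h_\nu(g)\geq\overline{\delta}_\P$, and in fact picking a value $t''$ with $t<t''<t'$, monotonicity gives $P(t''F)=\overline{\delta}_\P$ too and $P(t''F,\nu)=h_\nu(g)+t''\int F\,d\nu\geq h_\nu(g)+t\int F\,d\nu=\overline{\delta}_\P=P(t''F)$ with equality only if $\int F\,d\nu=0$; but $F>0$ forces $\nu$ to have no mass on any compact set (Lemma~\ref{lem:convergence_to_zero} reasoning), impossible for a probability measure on $T^1M$—contradiction, so no equilibrium measure exists.

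\textbf{Main obstacle.} The genuinely delicate point is the sharp upper bound $\lim_{t\to-\infty}P(tF)\leq\overline{\delta}_\P$ in (2): one must rule out that entropy stays large while $\int F\,d\nu_t\to 0$, and this is exactly where the escape-of-mass estimate of Theorem~\ref{thm:escape_mass} (in its pressure form, Theorem~\ref{existence_potentials}) is essential—the naive variational bound only gives $\leq h_{top}(g)$, which may be strictly larger than $\overline{\delta}_\P$. Everything else is either soft convexity/monotonicity or a direct appeal to Corollary~\ref{cor} and standard differentiability-of-pressure arguments; care is mainly needed in handling the degenerate case $t'=0$ (i.e.\ $P(tF)>\overline{\delta}_\P$ for all $t<0$) and in confirming $t'>-\infty$, which follows from item (2).
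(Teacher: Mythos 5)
Your items (3) and (5) are essentially the paper's arguments, and your outline of (2) matches the paper's proof. However, your proof of item (1) is circular within the paper's logical structure. You invoke Theorem~\ref{thm:existence_sequence} to produce a sequence $\mu_n\rightharpoonup 0$ with $h_{\mu_n}(g)\to\overline{\delta}_\P$, but Theorem~\ref{thm:existence_sequence} is proved in the paper \emph{after} and \emph{using} item (2) of the present theorem, which in turn rests on item (1). The paper instead proves (1) directly at the level of Poincar\'e series: fixing a maximal parabolic $\P$ with $\delta_\P=\overline{\delta}_\P$ and $t\le 0$, one compares $P_\P(s,tF)$ with the unweighted $P_\P(s,0)$ using the fact that $F\to 0$ through the cusp (so $\bigl|\int_{x_0}^{px_0}tF\bigr|\le |t|\,\varepsilon\, d(x_0,px_0)$ for $p$ deep in the cusp); this yields $\delta_\P^{tF}=\delta_\P$, and then $\overline{\delta}_\P=\delta_\P^{tF}\le\delta_\Gamma^{tF}=P(tF)$ by the inclusion $\P\subset\Gamma$ and Theorem~\ref{thm:pps}. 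No appeal to escaping sequences of measures is needed, and the circularity is avoided.

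Your treatment of (4) also has a genuine gap. In the non-compact setting, uniqueness of the equilibrium state at $t_0$ does \emph{not} by itself give differentiability: the one-sided tangent lines of the convex map $t\mapsto P(tF)$ need not be realized by any invariant probability measure, since a sequence $(\mu_{t_n})$ of equilibrium states with $t_n\to t_0$ may a priori lose mass. The paper's Proposition~\ref{prop:diff_equilibrium} is precisely where the work is done: it invokes Theorem~\ref{thm:escape_mass} to show that any vague subsequential limit $\mu$ of the $\mu_{t_n}$ satisfies $\|\mu\|\ge c>0$, then bootstraps via the inequality
\begin{equation*}
P(t_0F)\le \|\mu\|\,P(t_0F,\overline{\mu})+(1-\|\mu\|)\overline{\delta}_\P
\end{equation*}
and the standing gap $P(t_0F)>\overline{\delta}_\P$ to force $\|\mu\|=1$ and $\overline{\mu}=\mu_{t_0}$, after which Lemma~\ref{lem:wconvergence_F} gives $\int F\,d\mu_{t_n}\to\int F\,d\mu_{t_0}$ and the two one-sided derivative bounds \eqref{eq:prop_1}--\eqref{eq:prop_2} pinch. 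Calling this ``standard'' and citing the compact-case argument elides the very step that the escape-of-mass machinery of the paper was built to supply. You correctly identified (2) as a delicate point, but (4) is equally so, and (1) needs an altogether different (Poincar\'e-series) argument to avoid circularity.
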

\begin{proof}

Let $\P$ be a maximal parabolic subgroup of $\Gamma$ such that $\delta_\P=\overline{\delta}_\P$. Note that since $\P\subset\Gamma$, then $\delta^{tF}_{\P} \leq \delta^{tF}_{\Gamma}$. For $0\leq t$ we have $\delta_{\P}\leq \delta^{tF}_{\P}$, in particular $\overline{\delta}_\P \leq P(tF)=\delta^{tF}_\Gamma$. We are going to prove now that for every $t\leq 0$, we have
$$\delta^{tF}_{\P}=\delta_\P.$$
Fix $t\geq 0$. Again, since $F$ is positive, we get $\delta^{-tF}_{\P}\leq \delta_\P$. For $\varepsilon>0$, there exists a compact set $K\subset M$ such that for every $v\in \Omega \setminus T^1K$, we have $F(v)<\varepsilon$. We can suppose without loss of generality that $M\setminus K$ contains a standard cusp neighborhood $\mathcal{C}$ associated to the parabolic group $\P$. Pick $x_0\in\partial \mathcal{C}$. Then, for every $p\in \P$, we have
\begin{equation*}
\int_{x_0}^{p x_0} F \leq \varepsilon d(x_0,px_0).
\end{equation*}
Therefore, the Poincar\'e series associated to $(-tF,\P)$ is related to the Poincar\'e series of $\P$ as follows:
\begin{equation*}
\sum_{p\in\P}\exp\left( \int_{x_0}^{px_0} (-tF - s) \right) \geq  \sum_{p\in\P} \exp(-(t\varepsilon+s)d(x_0,px_0)).
\end{equation*}
Hence, if $t\varepsilon + s < \delta_\P$, then $\sum_{p\in\P}\exp\left( \int_{x_0}^{px_0} (-tF - s) \right)$ diverges. This implies that $\delta_\P-t\varepsilon \leq \delta_\P^{-tF}$ for every $\varepsilon>0$. In particular, the parabolic critical exponent verifies $\delta_\P \leq \delta^{-tF}_\P$, which concludes the proof of $(1)$.

In order to prove (2) recall that the pressure function $t\mapsto P(tF)$ is convex. If $F\in\mathcal{F}$, from (1) we deduce that this function has an horizontal asymptote as $t\to -\infty$, so the limit $\lim_{t\to -\infty} P(tF)=: A$ exists. Again from (1) we get $A\geq \overline{\delta}_\P$. By variational principle, for every $n\geq 1$ there exists a $(g_t)$-invariant probability measure $\mu_n$ such that $\lim_{n\to\infty} h_{\mu_n}(g)-n\int F d\mu_n=A$. Since $A$ is bounded below we necessarily have
$$\lim_{n\to\infty} \int F d\mu_n = 0.$$
But $F$ goes to zero through the cusps, then by Lemma \ref{lem:convergence_to_zero}, we have $\mu_n \rightharpoonup 0$. Using Corollary \ref{cor:leq_delta_P} and the fact that $A\leq \limsup_{n\to\infty} h_{\mu_n}(g)$, we obtain $A\leq \overline{\delta}_\P$. This implies that $A=\overline{\delta}_\P$ and the desired conclusion follows.

\begin{proposition}\label{prop:diff_equilibrium} Let $F\in\mathcal{F}$ and $t_0\in \R$. Assume that there exists an open interval $I\subset \R$ containing $t_0$ such that for every $t\in I$ the potential $tF$ verifies $P(tF)>\overline{\delta}_\P$. Then the pressure function $t\mapsto P(tF)$ is differentiable at $t_0$ and
$$\frac{d}{dt|}_{t=t_0} P(tF)=\int F d\mu_{t_0}.$$
\end{proposition}
\begin{proof}
The strategy of this proof is analogous to that one in [Keller] where the author treats the compact case (see also [Barreira]). However, in our setting we must be careful because of the escape of mass phenomenon.

Recall that the gap assumption $P(tF)>\overline{\delta}_\P$ implies that every potential $tF$, where $t\in I$ (by Corollary \ref{cor}) admits a unique equilibrium measure. Using the variational principle, for every $t>t_0$ such that $t\in I$, we have
\begin{equation}\label{eq:prop_1}
\int F d\mu_{t_0} \leq \frac{P(tF)-P(t_0F)}{t-t_0}\leq \int F d\mu_t,
\end{equation}
and for every $t<t_0$ such that $t\in I$, we have
\begin{equation}\label{eq:prop_2}
\int F d\mu_{t_0} \geq \frac{P(tF)-P(t_0F)}{t-t_0}\geq \int F d\mu_t.
\end{equation}

Now take any sequence $(t_n)$ of real numbers in $I$ converging to $t_0$. By passing to a subsequence we can assume that $(\mu_{t_n})$ converge vaguely to $\mu$. By Theorem \ref{thm:escape_mass} the mass of $\mu$ is at least $\varepsilon/(\delta_\Gamma-\overline{\delta}_\P)$, so the normalized measure $\overline{\mu}$ of $\mu$ is well-defined. We claim that $\overline{\mu}=\mu_{t_0}$. It suffices to prove that $\overline{\mu}$ is a equilibrium measure for $t_0F$. On the one hand, the variational principle implies the inequality
$$P(t_0F)\geq h_{\overline{\mu}}(g)+t_0\int F d\overline{\mu}.$$
By using Theorem \ref{thm:escape_mass} and Lemma \ref{lem:wconvergence_F} we have
\begin{eqnarray*}
\lim_{n\to\infty} h_{\mu_{t_n}}(g)+t_n\int F d\mu_n &\leq& \|\mu\|h_{\overline{\mu}}(g)+(1-\|\mu\|)\overline{\delta}_\P + t_0\int F d\mu\\
&=& \|\mu\|\left(h_{\overline{\mu}}(g)+ t_0\int F d\overline{\mu}\right)+(1-\|\mu\|)\overline{\delta}_\P.
\end{eqnarray*}
Because of the continuity of the pressure we observe $P(t_n F)>\overline{\delta}_\P$. Then
$$\overline{\delta}_\P \leq h_{\overline{\mu}}(g)+t_0\int Fd\overline{\mu}.$$
and therefore
$$P(t_0F)=\lim_{n\to\infty}P(t_nF)\leq h_{\overline{\mu}}(g)+ t_0\int F d\overline{\mu}.$$
In particular, the measure $\overline{\mu}$ is an equilibrium measure for $t_0F$ so it coincides with $\mu_{t_0}$. Using inequalities \eqref{eq:prop_1} and \eqref{eq:prop_2}, together Lemma \ref{lem:wconvergence_F}, we conclude that $t\mapsto P(tF)$ is differentiable at $t=t_0$, with
$$\lim_{t\to t_0} \frac{P(tF)-P(t_0F)}{t-t_0}=\int F d\mu_{t_0}.$$
\end{proof}

Claim (3) is a direct consequence of Proposition \ref{prop:diff_equilibrium}. By definition of $t'$, the pressure function $t\mapsto P(tF)$ is constant on $(-\infty,t')$. Since the monotony of a differentiable function can be described in terms of the sign of its derivative, and $\int F d\mu>0$ for every $(g_t)$-invariant probability measure $\mu$, the formula of the derivative of $t\mapsto P(tF)$ on $(t',\infty)$ in Proposition \ref{prop:diff_equilibrium} implies that $t\mapsto P(tF)$ is strictly increasing on that range, this concludes the proof of (4).

To prove (5) assume $\mu$ is a equilibrium measure for $tF$, where $t<t'$. If $t<t''<t'$, then $P(tF)=P(tF,\mu)<P(t''F,\mu)\le P(t''F)$, but $P(t''F)=P(tF)$ which is a contradiction.

\end{proof}

We now present a construction of a potential $F$ so that $tF$ has a equilibrium measure for all $t\in \R$. This will be used later on to provide a family of Gibbs measures convering to zero but with high entropy.\\

\noindent
\textbf{Example.} (\emph{No phase transitions}) We now construct a potential $F\in\mathcal{F}$ such that $\overline{\delta}_\P<P(tF)$ for every $t\in \R$. By Corollary \ref{cor} this implies that there are no phase transitions. The construction of this potential follows ideas in \cite{coudene}.

Let $\P$ be a maximal parabolic subgroup of $\Gamma$ such that $\overline{\delta}_\P=\delta_{\P}$. Assume that $\P$ is of divergence type. Recall that $M$ can be decomposed into a compact part and a finite number of cusps regions of the form $\mathcal{H}_i/\P_i$, where $\mathcal{H}_i=\{B_{\xi_i}(s_0)\}_{i=1}^{N_p}$ are disjoint horoballs in $\widetilde{M}$ and each $\P_i$ is a maximal parabolic group fixing $\xi_i$. We are going to construct a potential $F$ on the horoball $\mathcal{H}(s_0)$ corresponding to $\P$, which fixes $\xi$ at the boundary of $\widetilde{M}$. The definition of $F$ in the remaining horoballs can be done analogously.

We will choose two sequences of numbers: an increasing sequence $\{k_n\}_{n\in\N}$ of natural numbers and a decreasing sequence $\{\epsilon_n\}_{n\in\N}$ positive numbers converging to zero. We are going to choose these sequences in the following order: we first choose $\epsilon_1$, then $M_1$, then $\epsilon_2$, then $M_2$, etc. We pick once and for all $x_0 \in \partial \mathcal{H}(s_0)/\P$. For every $l\in\R$ and $L\in\R\cup\{\infty\}$ such that $l<L$, we define
$$S(l,L)=\{p\in\P: l<d(x_0,px_0)<L\}.$$
Define $P^L_l(\epsilon)=\sum_{p\in S(l,L)} \exp(-(\epsilon+ \delta_\P) d(x_0,px_0))$. By definition of the critical exponent of $\P$ this function is finite if $\varepsilon>0$ and since $\P$ is of divergence type we have $\lim_{\epsilon\to 0^+} P_l^\infty (\epsilon)=\infty$, for any $l\geq 1$. Clearly $P_l^L(\epsilon)$ is strictly decreasing in $\epsilon$.\\
Define $\epsilon_1$ so that $P_0^\infty(\epsilon_1)>2$, we assume $\epsilon_1<1$. We choose $k_1$ such that
$$P^{k_1}_{0}(\epsilon_1)=\sum_{p\in S(0,k_1)}\exp(-(\epsilon_1+ \delta_P) d(x_0,px_0))>1.$$
{\bf Claim} There exist $k_1'>k_1$ such that if $p\in S(k_1',\infty)$ then
$$\text{length}([x_0,px_0]\cap (\widetilde{M}\setminus \mathcal{H}(s_0+1)))< \dfrac{1}{2}\epsilon_1 d(x_0,px_0),$$
where $[x,y]$ is the geodesic connecting $x$ and $y$ in the universal covering $\widetilde{M}$. We choose now $\epsilon_2$ satisfying $0<\epsilon_2<\epsilon_1$ and $P^\infty_{k_1'}(2\epsilon_2)>2$. Then we pick $k_2$ large enough so that $k_2>k_1'>k_1$ and $P^{k_2}_{k_1'}(2\epsilon_2)>1$. The construction follows by induction. Assume we have already defined $\epsilon_1,k_1,...,\epsilon_m,k_m$. We want to define $\epsilon_{m+1}$ and $k_{m+1}$. As before, there exist $k_m'>k_m$ such that if $p\in S(k_m',\infty)$ then
$$\text{length}([x_0,px_0]\cap M_{<m})< \dfrac{1}{2}\epsilon_{m} d(x,px),$$
where $M_{<m}=\widetilde{M}\setminus \mathcal{H}(s_0+m)$. Define $\epsilon_{m+1}$ so that $0<\epsilon_{m+1}<\epsilon_m$ and $P^\infty_{k_m'}( (m+1)\epsilon_{m+1})>2$. We finally define $k_{m+1}$ so that $P^{k_{m+1}}_{k_m'}((m+1)\epsilon_{m+1})>1$.\\
We now construct the function $f$ in the horoball $\mathcal{H}(s_0)$. We start by defining the function $d: \mathcal{H}(s_0)/\P \to \R$ by the relation $d(x)=s$ if and only if $\pi(x)\in \partial H(s)/\P$. Note that $d$ is a Lipschitz function and the level sets of $d$ are horospheres centered at $\xi$. We define $f:\mathcal{H}(s_0)/\P\to \R$ as
$$\begin{array}{ll}
  f(x)=\frac{\varepsilon_n}{2}-d(x)+n & \text{ on } d^{-1}([n,n+\varepsilon_n-\varepsilon_{n+1}])  \\
  f(x)=\frac{\varepsilon_{n+1}}{2} & \text{ on }   d^{-1}([n+\varepsilon_n-\varepsilon_{n+1},n+1]).
\end{array}$$
As said before, since the horoballs are disjoint we can define $f$ in every cusp $\mathcal{H}_i/\P_i$ in a similar way. Since $f$ is constant in the boundary of each of those horoballs, we can extend it to the whole manifold into a bounded H\"older-continuous function that goes to zero through the cusps. Now we define $F:T^1M\to \R$ as $F(v)=f(x)$ whenever $\pi(v)=x$. Clearly $F$ is positive bounded and H\"older-continuous by construction. For given $t\geq 0$, choose a natural number $N$ greater than $t$, then
\begin{eqnarray*}
\sum_{p\in\P}\exp\left(\int_{x_0}^{px_0} (-tF - \delta_\P)\right)&=& \sum_{n=1}^\infty \sum_{S(k_n,k_{n+1})} \exp\left(\int_{x_0}^{px_0} (-tF - \delta_\P)\right)  \\
&\ge& \sum_{n=1}^\infty \sum_{S(k_n',k_{n+1})} \exp(-(t\epsilon_n + \delta_\P) d(x_0,px_0))\\
&\ge& \sum_{n=1}^\infty\sum_{S(k_n',k_{n+1})} \exp(-(N \epsilon_n + \delta_\P) d(x_0,px_0))\\
&\ge& \sum_{n=N}^\infty\sum_{S(k_n',k_{n+1})} \exp(-(n \epsilon_n + \delta_\P) d(x_0,px_0))\\
&=& \sum_{n=N}^\infty P_{k_n'}^{k_{n+1}}( n\epsilon_n)>\sum_{n=N}^\infty P_{k_n'}^{k_{n+1}}((n+1)\epsilon_{n+1}),
\end{eqnarray*}
which clearly diverges by construction. In other words $(\P,F)$ is of divergent type. Finally, we get $\overline{\delta}_\P<P(tF)$ for every $t\leq 0$ from the GAP criterion. The case when $t>0$ follows from the fact that $\overline{\delta}_\P<\delta_{\Gamma}$ and that for every $t>0$ we have $\delta_\Gamma \leq P(tF)$.\\

\noindent

\begin{remark}
Since the numbers $\{\epsilon_n\}$ in the above construction are going to zero quite fast, we interprete the decay of $F$ through the cusp associated to $\P$ as `very fast'. On the contrary, if we want to find phase transitions we need to consider potentials with very slow decay in very particular manifolds. This will be treated in a different paper by the second author (see \cite{Vel}).
\end{remark}

We are now in position to prove Theorem \ref{thm:existence_sequence} in the introduction.\\\\
\noindent
\textbf{Theorem \ref{thm:existence_sequence}} \emph{Let $(M,g)$ be a geometrically finite Riemannian manifold with pinched negative sectional curvature. Then there exists a sequence $(\mu_n)$ of ergodic $(g_t)$-invariant probability measures on $T^1M$ converging vaguely to 0 and}
$$\lim_{n\to\infty} h_{\mu_n}(g) = \overline{\delta}_{\P}.$$
\emph{If we moreover assume that there exist a parabolic subgroup $\P$ of divergence type for which $\delta_\P=\overline{\delta}_\P$, then we can take the measures $(\mu_n)$ as Gibbs measures.}

\begin{proof}
Fix a potential $F$ in $\mathcal{F}$. By (2) in Theorem \ref{thm:phase_transitions} and the variational principle, for every $\varepsilon>0$ there exists $N\in\N$ such that for every $n\geq N$ there exists a $(g^t)$-invariant probability measure $\mu_n$ in $T^1M$ such that
\begin{equation}\label{eq:1:construction1}
\overline{\delta}_\P - \varepsilon \leq h_{\mu_n}(g)-n\int F d\mu_n \leq \overline{\delta}_\P+\varepsilon.
\end{equation}
Observe in particular that $\overline{\delta}_\P - \varepsilon \leq h_{\mu_n}(g)$. On the other hand, since the entropy of the geodesic flow is bounded, the central term in \eqref{eq:1:construction1} must to remain bounded. Hence
$$\lim_{n\to\infty}\int F d\mu_n = 0.$$
Note the the measures $\mu_n$ are supported in $\Omega$, so $\mu_n \rightharpoonup 0$ since $F$ goes to zero through the cusps. Therefore, Corollary \ref{cor:leq_delta_P} implies that
$$\limsup_{\mu_n}h_{\mu_n}(g)\leq \overline{\delta}_\P.$$
Putting all together, we get
$$\overline{\delta}_\P - \varepsilon \leq \limsup_{\mu_n}h_{\mu_n}(g)\leq \overline{\delta}_\P,$$
which obviously implies the desired conclusion as $\varepsilon\to 0$. \\
Under the additional hypothesis that there exist a parabolic subgroup $\P$ of divergence type for which $\delta_\P=\overline{\delta}_\P$ we can use the no phase transition example described before as our $F$. In this case we do not need to do any kind of approximation, we just take the respective equilibrium measures since they exist. The argument follows exactly as in the previous case.
\end{proof}

\section{Entropy at infinity for normal coverings}\label{infinity}

A Riemannian manifold $(M,g)$ is called a \emph{regular} $\Z$-\emph{cover} of  $(M_0,g_0)$ if there is a surjective map $p:M\to M_0$ such that
\begin{enumerate}
  \item[(1)] every $x\in M_0$ has a neighbourhood $V_x$ such that every connected component of $p^{-1}(V_x)$ is mapped isometrically by $p$ onto $V_x$,
  \item[(2)] the group $\text{Deck}(M,p)$ of \text{deck transformations}, that is the group
  $$\text{Deck}(M,p):=\{D:M\to M : D \text{ is an isometry such that } p\circ D = p\},$$
  is isomorphic to $\Z$, and
  \item[(3)] for every $x\in M_0$, there exists $\tilde{x}\in M$ such that $p^{-1}(x)=\{D(\tilde{x}): D\in\text{Deck}(M,p)\}$.
\end{enumerate}
Observe that every $D\in\text{Deck}(M,p)$ acts on $T^1M$ by its differential. We also denote this action by $D$. Despite $\Z$-coverings are geometrically infinite, in some cases we can say something about the dynamics of the geodesic flow. For instance, in the case presented below, the geodesic flow is ergodic with respect to the Bowen-Margulis measure (see for instance \cite{Rees}).

\begin{theorem}\label{thm:geom_inf} Let $M$ be a regular $\Z$-cover of a compact hyperbolic surface. Then,
\begin{enumerate}
  \item[(1)] The surface $M$ is geometrically infinite, and
  \item[(2)] The geodesic flow on $T^1M$ is conservative and ergodic with respect to the Bowen-Margulis measure.
\end{enumerate}
\end{theorem}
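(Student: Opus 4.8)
The plan is to write $M$ as $\widetilde{M}/\Gamma$ with $\widetilde{M}=\H^2$ (since $M_0$ is a hyperbolic surface), where $\Gamma=\pi_1(M)$ is a normal subgroup of the cocompact surface group $\Gamma_0=\pi_1(M_0)$ with quotient $\Gamma_0/\Gamma\cong\text{Deck}(M,p)\cong\Z$; in particular $\Gamma$ is discrete, torsion free and nontrivial (a cocompact surface group is not isomorphic to $\Z$). For (1), I would first recall that a nontrivial normal subgroup of a non-elementary discrete group of isometries of $\H^2$ has the same limit set as the ambient group: $L(\Gamma)$ is nonempty, closed, and $\Gamma_0$-invariant (if $\gamma_n o\to\xi$ with $\gamma_n\in\Gamma$, then $g\gamma_n g^{-1}\in\Gamma$ and $g\gamma_n g^{-1}(g o)\to g\xi$ for all $g\in\Gamma_0$), hence it contains the smallest such set $L(\Gamma_0)$; since $\Gamma_0$ is cocompact, $L(\Gamma_0)=\partial_\infty\H^2$, so $L(\Gamma)=\partial_\infty\H^2$ as well. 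Therefore the nonwandering set of the geodesic flow is $\Omega=((L(\Gamma)\times L(\Gamma))\setminus \text{Diag} \times \R)/\Gamma=T^1M$. On the other hand $p\colon M\to M_0$ is an infinite-sheeted covering of a compact manifold, so $M$ has infinite volume and hence $T^1M$ carries an infinite Liouville measure. Thus every $\epsilon$-neighbourhood of $\Omega=T^1M$ (namely $T^1M$ itself) has infinite Liouville measure, so $M$ is not geometrically finite; this proves (1).

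For (2), I would invoke the dichotomy recorded in Theorem \ref{thm:pts}. Since $\Gamma$ is non-elementary and torsion free, it suffices to prove that $\Gamma$ is of divergence type, for then part (a) of that theorem yields at once that $(g_t)$ is ergodic and completely conservative (in particular conservative) with respect to $m_{BM}$. To obtain divergence type, note first that the cocompact group $\Gamma_0$ is of divergence type: by Margulis' orbit counting $\#\{\gamma\in\Gamma_0:d(o,\gamma o)\le T\}\sim c\,e^{\delta_{\Gamma_0}T}$, so a partial summation shows that the Poincar\'e series $P_{\Gamma_0}(s,0)$ diverges at $s=\delta_{\Gamma_0}=1$. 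Now $\Gamma$ is a normal subgroup of $\Gamma_0$ with amenable quotient $\Z$, and a normal subgroup of a divergence-type group with amenable quotient is again of divergence type (with the same critical exponent); this is the content of Rees' theorem on geodesic flows on $\Z$-covers \cite{Rees}, and can also be obtained directly from the amenability of $\Z$ via a Kesten-type criterion on the Schreier graph $\Gamma_0/\Gamma=\Z$. With $\Gamma$ of divergence type, Theorem \ref{thm:pts}(a) finishes the proof of (2).

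The only genuinely nontrivial step is the divergence type of $\Gamma$, equivalently the divergence at $s=1$ of $\sum_{\gamma\in\Gamma}e^{-d(o,\gamma o)}$, and this is where I expect the main obstacle to lie. A self-contained route is a counting argument: writing $\phi\colon\Gamma_0\to\Z$ for the homomorphism with kernel $\Gamma$ and using that the additive cocycle $\phi$ has zero drift (by the time-reversal symmetry of the Bowen--Margulis data) together with a central limit theorem for $\phi$ along geodesics, one obtains $\#\{\gamma\in\Gamma:d(o,\gamma o)\le T\}$ of order $e^{T}/\sqrt{T}$, so that after partial summation the Poincar\'e series of $\Gamma$ at $s=1$ behaves like $\sum T^{-1/2}=\infty$; alternatively one can apply Atkinson's recurrence theorem to the zero-drift cocycle to get conservativity directly and then argue ergodicity. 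Given divergence type, the remaining ingredients (the identification $\Omega=T^1M$, the infinitude of $\mathrm{Vol}(M)$, and the appeal to Theorem \ref{thm:pts}) are routine.
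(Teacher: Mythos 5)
The paper does not actually prove Theorem \ref{thm:geom_inf}; it records the statement and points to Rees \cite{Rees}, treating it as an external input. In fact the sentence immediately following the theorem runs the logic in the \emph{opposite} direction from yours: the paper uses Theorem \ref{thm:geom_inf} together with the Hopf--Tsuji--Sullivan dichotomy (Theorem \ref{thm:pts}) to \emph{deduce} that $\Gamma$ is of divergence type. Your proposal inverts this, proposing to establish divergence type first and then read off conservativity and ergodicity from Theorem \ref{thm:pts}(a). That reduction is legitimate, and your argument for part (1) is self-contained and correct: normality of $\Gamma$ in the cocompact, non-elementary $\Gamma_0$ forces $L(\Gamma)=L(\Gamma_0)=\partial_\infty\H^2$, so $\Omega=T^1M$, which has infinite Liouville volume since the covering is infinite-sheeted.

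The caveat is that you never actually establish the divergence type of $\Gamma$. You flag it, correctly, as the crux, and you sketch two plausible routes (a local-limit-type count $\#\{\gamma\in\Gamma:d(o,\gamma o)\le T\}$ of order $e^T/\sqrt{T}$ followed by partial summation, or Atkinson's recurrence criterion applied to the zero-drift cocycle $\phi\colon\Gamma_0\to\Z$), but neither is carried out, and both require genuine input: one must verify that $\phi$ is not a coboundary so the variance is positive, and one must supply the relevant limit theorem for the cocycle along geodesics; the Atkinson route moreover gives only conservativity and leaves ergodicity to a separate Hopf-type argument. So your write-up is a correct and honest outline that goes strictly further than the paper's text, but it delegates the one hard analytic step to the literature in much the same way the paper delegates the entire theorem to \cite{Rees}.
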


Theorem \ref{thm:geom_inf} implies that the fundamental group $\Gamma$ of a regular $\Z$-cover $M$ of a compact hyperbolic surface is of divergent type as consequence Theorem \ref{thm:pts}. This implies that the topological entropy of the geodesic flow on $M$ is 1. In more generality we can prove the following

\begin{theorem} Let $M=\widetilde{M}/\Gamma$ be a regular $\Z$-cover of a compact negatively curved  manifold. Then,
$$h_\infty(g)=\delta_\Gamma.$$
\end{theorem}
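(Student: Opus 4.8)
The plan is to establish that $h_\infty(g)$ equals the topological entropy $h_{top}(g)$ of the geodesic flow on $T^1M$, and then to identify the latter with $\delta_\Gamma$.

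First I would record that $\Gamma$ is a non-elementary discrete torsion-free group: it is an infinite normal subgroup of the cocompact lattice $\Gamma_0=\pi_1(M_0)$, hence $L(\Gamma)=L(\Gamma_0)=\partial_\infty\widetilde M$, and $\widetilde M$ is complete, simply connected, of dimension $\ge 2$ with pinched negative curvature. Then Theorem~\ref{thm:pps}(a) applied to the zero potential gives $P(0)=\delta_\Gamma$, and since $P(0)=\sup_\mu h_\mu(g)=h_{top}(g)$ by the variational principle (Theorem~\ref{thm:variational_principle}), we obtain $h_{top}(g)=\delta_\Gamma$. The inequality $h_\infty(g)\le h_{top}(g)$ is then immediate: any $(g_t)$-invariant probability measure $\mu$ satisfies $h_\mu(g)\le h_{top}(g)$, so $\limsup_n h_{\mu_n}(g)\le h_{top}(g)$ for every sequence of such measures, and taking the supremum over sequences converging vaguely to $0$ gives the bound.

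The core of the argument is the reverse inequality $h_\infty(g)\ge h_{top}(g)$, which I would obtain from the deck group. Let $D$ generate $\mathrm{Deck}(M,p)\cong\Z$, acting on $T^1M$ by its differential; then $D$ is an isometry of $T^1M$ that commutes with $(g_t)$, and the $\langle D\rangle$-translates of a fundamental domain $\mathcal F\subset T^1M$ tile $T^1M$, with $\mathcal F$ relatively compact since $M/\langle D\rangle=M_0$ is compact. Fix any $(g_t)$-invariant probability measure $\nu$ on $T^1M$ and put $\mu_k:=D^k_\ast\nu$. Each $\mu_k$ is again a $(g_t)$-invariant probability measure; as $D^k$ is a measurable isomorphism conjugating $(g_t,\nu)$ to $(g_t,\mu_k)$ we have $h_{\mu_k}(g)=h_\nu(g)$ for all $k$; and $\mu_k\rightharpoonup 0$ as $k\to\infty$, because for $f\in C_c(T^1M)$ with support inside $\bigcup_{|j|\le J}D^j\mathcal F$ one has $\int f\,d\mu_k=\int f\circ D^k\,d\nu$ with $f\circ D^k$ supported in $\bigcup_{|i+k|\le J}D^i\mathcal F$, and $\sum_i\nu(D^i\mathcal F)<\infty$ forces $\nu\big(\bigcup_{|i+k|\le J}D^i\mathcal F\big)\to 0$. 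Hence $h_\infty(g)\ge\limsup_k h_{\mu_k}(g)=h_\nu(g)$ for every such $\nu$, and taking the supremum over $\nu$ together with the variational principle yields $h_\infty(g)\ge\sup_\nu h_\nu(g)=h_{top}(g)$. Combining the two inequalities gives $h_\infty(g)=h_{top}(g)=\delta_\Gamma$.

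The main obstacle — or rather the point one must not stumble on — is that $T^1M$ carries no measure of maximal entropy in this geometrically infinite situation: the Bowen--Margulis measure is $\langle D\rangle$-invariant and infinite, so ``the'' measure of maximal entropy cannot simply be translated off to infinity. The resolution is that the definition of $h_\infty(g)$ only asks for a sequence of escaping measures whose entropies approach $h_{top}(g)$, and such measures are produced, up to any prescribed error, by the variational principle; the deck translates of any fixed near-maximizer then escape along the $\Z$-direction while retaining its entropy. Once this is seen, the remaining verifications (conjugacy invariance of entropy under $D$, and the vague convergence $\mu_k\rightharpoonup 0$) are routine.
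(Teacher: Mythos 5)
Your proof is correct and follows essentially the same strategy as the paper: establish $h_{top}(g)=\delta_\Gamma$, note the trivial bound $h_\infty(g)\le h_{top}(g)$, and for the reverse inequality push a near-maximal-entropy measure to infinity using the deck transformations $D^k$, which preserve entropy while sending the measures vaguely to zero via the fundamental-domain tiling argument. Your phrasing of the vague convergence in terms of test functions supported in finitely many translates of $\mathcal F$ is an equivalent, slightly cleaner bookkeeping than the paper's decomposition of a compact set $K$ into pieces $K_m$, but the underlying idea (the translates $\{D^n\mathcal F\}$ are disjoint, so their $\nu$-measures sum to at most $1$ and the tails vanish) is identical.
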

\begin{proof}
Let $(\mu_n)$ be a sequence of $(g_t)$-invariant probability measures on $T^1M$ converging vaguely to 0. By variational principle (see \cite{MR2097356}), we have
$$\limsup_{n\to\infty}h_{\mu_n}(g)\leq \delta_\Gamma,$$
so we only need to prove that for every $\varepsilon>0$ there exists a sequence $(\mu_n)$ of $(g_t)$-invariant probability measures such that $\mu_n\rightharpoonup 0$ and $h_{\mu_n}(g)\geq h_{top}(g)-\varepsilon$. Fix $\varepsilon>0$ and consider any $(g_t)$-invariant probability measure $\mu$ such that $h_{\mu}(g)>h_{top}(g)-\varepsilon$. By definition $\text{Deck}(M,p)$ is isomorphic to $\Z$, so there exists $D\in \text{Deck}(M,p)$ such that $<D>=\text{Deck}(p,M)$. The action of $D$ on $T^1M$ is also denoted by $D$. Let $\mu_n=(D^n)_\ast\mu$ be the image measure of $\mu$ by the map $D^n$. We claim that $(\mu_n)$ is the desired sequence of measures. On the other hand, for every $n\in\Z$, we have
\begin{equation}\label{eq:com}
D^n\circ g_t = g_t \circ D^n.
\end{equation}
It implies that $\mu_n$ is invariant by the action of the geodesic flow since
\begin{eqnarray*}
\mu_n(g_t(A))&=&\mu(D^{-n}(g_t(A)))=\mu(g_t(D^{-n}A))\\
&=&\mu(D^{-n}A)=\mu_n(A).
\end{eqnarray*}
Using again \eqref{eq:com} it follows that the map $D^n$ defines a measure-conjugation between $(T^1M,\mu,g)$ and $(T^1M,\mu_n,g)$, hence $h_\mu(g)=h_{\mu_n}(g)$. Let $\mathcal{D}\subset T^1M$ be a fundamental domain for the action of $\text{Deck}(M,p)$ over $T^1M$. If $K\subset T^1M$ is a compact set, then there exists a finite set $\mathcal{A}$ of integer numbers such that
$$K=\bigcup_{m\in\mathcal{A}}D^m(K\cap\mathcal{D}).$$
For $m\in\mathcal{A}$ define $K_m=D^m(K\cap\mathcal{D})$. By construction $\{D^n K_m:n\in\Z\}$ is a family of disjoint sets, so
$$\sum_{n\in\N}\mu_{n}(K_m)=\sum_{n\in\N}\mu(D^{-n}K_m) = \mu\left(\bigcup_{n\in\N}D^{-n}K_m\right)\leq 1.$$
In particular, the sequence $(\mu_{n}(K_m))_n$ converges to 0. Since
$$D^{-n}K=\bigsqcup_{m\in\mathcal{A}} D^{-n}K_m,$$
we necessarily have that $(\mu_{n}(K))_n$ converges to 0, which implies $\mu_n \rightharpoonup 0$. This concludes the proof of the theorem.
\end{proof}

\section{Final remarks}\label{final}

The measure of maximal entropy for the geodesic flow is known to exists if $M$ is compact or convex cocompact (i.e. the convex core is compact). 	In those cases the geodesic flow is modelled as a suspension flow over a Markov shift of finite type. An explicit formula for this measure was obtained by Sullivan in his pioneer work on the subject (in the hyperbolic case). When the nonwandering set is not compact, the existence of the measure of maximal entropy is more delicate. There are plenty of examples where the measure of maximal entropy simply does not exist. In the geometrically finite case Dal'bo, Otal and Peign\'e gave a characterization of the finitude of the Bowen-Margulis measure in terms of a modified Poincar\'e series running on the maximal parabolics subgroups of $\Gamma$.  A simple corollary of their theorem (which might be more practical than the theorem itself) is the following
\begin{theorem} Let $(M,g)$ be a geometrically finite Riemannian manifold with pinched negative sectional curvature. Assume that the derivatives of the sectional curvature are uniformly bounded. Suppose $\overline{\delta}_\P<\delta_\Gamma$. Then the geodesic flow of $M$ has a measure of maximal entropy.
\end{theorem}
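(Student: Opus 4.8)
The plan is to produce the measure of maximal entropy as a vague limit of an entropy-maximizing sequence, using the escape-of-mass estimates of Section \ref{escape} to show that no mass is lost in the cusps. The hypothesis $\overline{\delta}_\P<\delta_\Gamma$ is exactly what prevents such loss.

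First I would record that, under these hypotheses, $h_{top}(g)=\delta_\Gamma<\infty$: this is Theorem \ref{thm:pps}(a) applied to $F=0$ (which gives $P(0)=\delta_\Gamma$) together with the variational principle, Theorem \ref{thm:variational_principle} (which gives $P(0)=h_{top}(g)$). Again by the variational principle there is a sequence $(\mu_n)$ of $(g_t)$-invariant probability measures on $T^1M$ with $h_{\mu_n}(g)\to h_{top}(g)=\delta_\Gamma$. Since $T^1M$ is a second countable, locally compact space, after passing to a subsequence we may assume $\mu_n\rightharpoonup\mu$ for some subprobability measure $\mu$ on $T^1M$.

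The key step is to show $\|\mu\|=1$. Fix any $c$ with $\overline{\delta}_\P<c<\delta_\Gamma$; for $n$ large we have $h_{\mu_n}(g)\ge c$, so Corollary \ref{cor:mass_estimationbelow} gives
\[
\|\mu\|\ \ge\ \frac{c-\overline{\delta}_\P}{\delta_\Gamma-\overline{\delta}_\P}.
\]
Letting $c\nearrow\delta_\Gamma$ forces $\|\mu\|\ge 1$, hence $\|\mu\|=1$: the limit $\mu$ is itself a $(g_t)$-invariant probability measure, with no escape of mass. Now apply Theorem \ref{thm:escape_mass} with $\|\mu\|=1$, which yields
\[
h_{top}(g)=\limsup_{n\to\infty}h_{\mu_n}(g)\ \le\ \|\mu\|\,h_{\mu/\|\mu\|}(g)+(1-\|\mu\|)\overline{\delta}_\P\ =\ h_\mu(g).
\]
Combined with the trivial bound $h_\mu(g)\le h_{top}(g)$ we get $h_\mu(g)=h_{top}(g)$, i.e. $\mu$ is a measure of maximal entropy.

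The only real content is the middle step, ruling out escape of mass along a maximizing sequence, and this is precisely where $\overline{\delta}_\P<\delta_\Gamma$ is used, through Corollary \ref{cor:mass_estimationbelow}. I expect no further obstacle. As an alternative (and essentially a direct citation) one can avoid maximizing sequences entirely: the hypothesis gives $\delta_\P\le\overline{\delta}_\P<\delta_\Gamma$ for every parabolic subgroup $\P$, so the Bowen–Margulis measure $m_{BM}$ is finite by Theorem \ref{thm:gibbs_finite}, and by Theorem \ref{thm:pps}(b) its normalization is the unique equilibrium state for $F=0$, which is exactly the measure of maximal entropy.
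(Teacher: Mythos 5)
Your primary argument is essentially the paper's own: take a sequence $(\mu_n)$ with $h_{\mu_n}(g)\to h_{top}(g)$, pass to a vague limit $\mu$, and use Theorem \ref{thm:escape_mass} (you via Corollary \ref{cor:mass_estimationbelow}, the paper directly) together with the gap $\overline{\delta}_\P<\delta_\Gamma$ to force $\|\mu\|=1$, then read off $h_\mu(g)=h_{top}(g)$ from the same inequality. The alternative you sketch at the end (finiteness of $m_{BM}$ via Theorem \ref{thm:gibbs_finite} and Theorem \ref{thm:pps}(b)) is precisely the classical Dal'bo--Otal--Peign\'e route that the paper is explicitly offering an escape-of-mass reproof of, so it is correct but is the argument the section is set up to avoid.
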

\begin{proof}
Take a sequence $\{\mu_n\}_{n\ge 1}$ of invariant probability measures such that $h_{\mu_n}(g)\to h_{top}(g)$. Choose $\mu$ any vague limit of the sequence and observe that by using Theorem  \ref{thm:escape_mass} we can conclude $\mu$ is a probability measure. Moreover $h_\mu(g)=h_{top}$, as required.
\end{proof}

Equivalently if there is no measure of maximal entropy then $\overline{\delta}_\P=h_{top}(g)$. If we choose a sequence  $\{\mu_n\}_{n\ge 1}$ as above, then necessarily any vague limit must have cero mass. In other words
\begin{theorem}Let $(M,g)$ be a geometrically finite Riemannian manifold with pinched negative sectional curvature. Assume that the derivatives of the sectional curvature are uniformly bounded. If there is no measure of maximal entropy, then any sequence of measures approximating the topological entropy must converge to zero.
\end{theorem}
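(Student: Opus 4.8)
The plan is to argue by contradiction. Assume there is no measure of maximal entropy, and suppose some sequence $(\mu_n)$ of $(g_t)$-invariant probability measures with $h_{\mu_n}(g)\to h_{top}(g)$ fails to converge vaguely to $0$. Then there are a compact set $K\subset T^1M$ and $\varepsilon>0$ such that, after passing to a subsequence, $\mu_n(K)\ge\varepsilon$ for all $n$. Since $T^1M$ is locally compact, $\sigma$-compact and metrizable, the set of sub-probability measures on it is sequentially compact in the vague topology, so after passing to a further subsequence we may assume $\mu_n\rightharpoonup\mu$ for some sub-probability measure $\mu$. Choosing $f\in C_c(T^1M)$ with $\mathbbm{1}_K\le f\le 1$ gives $\|\mu\|\ge \int f\,d\mu=\lim_n\int f\,d\mu_n\ge \varepsilon>0$, so $\mu$ has positive mass.

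Next I would record two elementary bounds: $h_{\mu/\|\mu\|}(g)\le h_{top}(g)$ by the variational principle, and $\overline{\delta}_\P\le\delta_\Gamma=h_{top}(g)$, the inequality because every parabolic subgroup $\P\subseteq\Gamma$ satisfies $P_\P(s,0)\le P_\Gamma(s,0)$ and hence $\delta_\P\le\delta_\Gamma$, and the equality because $h_{top}(g)=P(0)=\delta_\Gamma$ by Theorem \ref{thm:pps}(a) together with the variational principle. Feeding the convergent subsequence into Theorem \ref{thm:escape_mass} then yields
$$h_{top}(g)=\limsup_{n\to\infty}h_{\mu_n}(g)\le \|\mu\|\,h_{\mu/\|\mu\|}(g)+(1-\|\mu\|)\overline{\delta}_\P\le \|\mu\|\,h_{\mu/\|\mu\|}(g)+(1-\|\mu\|)h_{top}(g).$$
Subtracting $(1-\|\mu\|)h_{top}(g)$ and dividing by $\|\mu\|>0$ gives $h_{top}(g)\le h_{\mu/\|\mu\|}(g)$, hence $h_{\mu/\|\mu\|}(g)=h_{top}(g)$. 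Thus $\mu/\|\mu\|$ is a measure of maximal entropy, contradicting our assumption; therefore every entropy-maximizing sequence converges vaguely to $0$.

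This is essentially the argument already sketched in the paragraph preceding the statement, and I do not expect a genuine obstacle. The only points needing a little care are the extraction of a vaguely convergent subsequence whose limit has positive mass (which is exactly what the failure of $\mu_n\rightharpoonup 0$ provides, via sequential vague compactness of sub-probability measures), and the observation that Theorem \ref{thm:escape_mass} applies to the extracted subsequence since it is again a sequence of invariant probability measures converging vaguely to $\mu$. As a side benefit, applying the same computation to an arbitrary entropy-maximizing sequence—now with $\|\mu\|=0$—recovers $h_{top}(g)\le\overline{\delta}_\P$, and hence the equality $\overline{\delta}_\P=h_{top}(g)$ that accompanies the absence of a measure of maximal entropy.
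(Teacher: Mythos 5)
Your proof is correct and follows essentially the same route the paper intends: extract a vaguely convergent subsequence, feed it into Theorem \ref{thm:escape_mass}, and contradict the nonexistence of a measure of maximal entropy. The only (minor) difference is that the paper first notes that no measure of maximal entropy forces $\overline{\delta}_\P = h_{top}(g)$ via the preceding theorem, whereas you bypass this and use only the elementary bound $\overline{\delta}_\P \le \delta_\Gamma = h_{top}(g)$, which makes your argument self-contained without changing its substance.
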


This is an interesting phenomena since the nonexistence of measure of maximal entropy simply means that the Bowen-Margulis measure is infinite. In this case the measures with high entropy must concentrate their mass in the cusps. It is an interesting question whether or not a sequence of measures whose entropies approximate the topological entropy converge to the Bowen-Margulis measure in a more \emph{suitable} sense or not.

We finish this paper with a small discussion about when we can weaken the H\"older regularity assumption of the potential. Let  $F:T^1M\to \R$ be a continuous function going to zero through the cusps of $M$. In particular if $\{\mu_n\}$ converges vaguely to $\mu$. Then
\begin{align*}
\int_K F d\mu_n + \sup\{|F(x)|: x\in K^c\}  &\ge \int_K F d\mu_n+\int_{K^c} F d\mu_n=\int Fd\mu_n,\\
\int_K Fd\mu+ \sup\{|F(x)|: x\in K^c\} &\ge \limsup_{n\to \infty} \int F d\mu_n,
\end{align*}
and therefore by taking the limit over compacts $K\subset T^1M$ we have
$$\int Fd\mu\ge \limsup_{n\to \infty} \int F d\mu_n.$$
The following easy consequence was already stated as Theorem \ref{existence_potentials} above. Here we emphazise the necessary conditions, in particular we prove the existence of equilibrium measures without any H\"older regularity assumption on the potential.
\begin{theorem} Under the hypothesis of Theorem \ref{thm:escape_mass}, for any continuous function $F:T^1M\to\R$ going to zero through the cusps of $M$, we have
$$\limsup_{n\to\infty} P(F,\mu_n) \leq \|\mu\|P(F,\mu/|\mu\|)+(1-\|\mu\|)\overline{\delta}_{\P}.$$
In particular, if $P(F)>\overline{\delta}_\P$, then there exists at least one equilibrium measure for the potential $F$.
\end{theorem}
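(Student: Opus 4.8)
The plan is that this statement costs essentially nothing once Theorem~\ref{thm:escape_mass} is in hand: the pressure inequality is obtained by adding the entropy bound to the estimate for $\int F\,d\mu_n$ under vague convergence, and the existence of an equilibrium measure then follows from the usual maximizing-sequence argument. For the inequality I would start from $P(F,\mu_n)=h_{\mu_n}(g)+\int F\,d\mu_n$ and use subadditivity of $\limsup$ — legitimate since both sequences are bounded, as $h_{\mu_n}(g)\le h_{top}(g)$ and $F$ is bounded on a neighbourhood of $\Omega$, which contains the support of every $\mu_n$ — to write
$$\limsup_{n\to\infty}P(F,\mu_n)\ \le\ \limsup_{n\to\infty}h_{\mu_n}(g)\ +\ \limsup_{n\to\infty}\int F\,d\mu_n .$$
The first term is at most $\|\mu\|h_{\mu/\|\mu\|}(g)+(1-\|\mu\|)\overline{\delta}_\P$ by Theorem~\ref{thm:escape_mass}, and the second is at most $\int F\,d\mu=\|\mu\|\int F\,d(\mu/\|\mu\|)$ by the displayed estimate proved just above (which uses only continuity of $F$ and its decay through the cusps). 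Regrouping yields $\|\mu\|P(F,\mu/\|\mu\|)+(1-\|\mu\|)\overline{\delta}_\P$. In the degenerate case $\|\mu\|=0$ one reads the right-hand side as $\overline{\delta}_\P$ and argues directly from Corollary~\ref{cor:leq_delta_P} together with $\limsup_n\int F\,d\mu_n\le 0$.

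For the existence of an equilibrium measure when $P(F)>\overline{\delta}_\P$, I would use the variational principle to pick $(g_t)$-invariant probability measures $\mu_n$ with $P(F,\mu_n)\to P(F)$; since $T^1M$ is locally compact and second countable, after passing to a subsequence we may assume $\mu_n\rightharpoonup\mu$ for a sub-probability measure $\mu$, which is automatically $(g_t)$-invariant. The inequality just proved gives $P(F)\le\|\mu\|P(F,\mu/\|\mu\|)+(1-\|\mu\|)\overline{\delta}_\P$. If $\|\mu\|=0$ this contradicts $P(F)>\overline{\delta}_\P$; if $0<\|\mu\|<1$, bounding $P(F,\mu/\|\mu\|)\le P(F)$ by the variational principle gives $(1-\|\mu\|)P(F)\le(1-\|\mu\|)\overline{\delta}_\P$, again a contradiction. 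Hence $\|\mu\|=1$, so the inequality reads $P(F)\le P(F,\mu)$; since $P(F,\mu)\le P(F)$ always, $\mu$ is an equilibrium measure.

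The statement is worth recording separately from Theorem~\ref{existence_potentials} precisely because every estimate above must avoid the H\"older hypothesis: the control of $\int F\,d\mu_n$ rests solely on the computation preceding the theorem rather than on Lemma~\ref{lem:wconvergence_F}. I do not expect any genuine obstacle beyond this bookkeeping and the isolated treatment of the branch $\|\mu\|=0$, where $\mu/\|\mu\|$ is undefined and one must fall back on Corollary~\ref{cor:leq_delta_P}.
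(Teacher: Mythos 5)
Your argument is correct and follows exactly the route the paper leaves implicit: combine Theorem~\ref{thm:escape_mass} with the one-sided estimate $\limsup_n\int F\,d\mu_n\le\int F\,d\mu$ proved just before the statement (the H\"older-free substitute for Lemma~\ref{lem:wconvergence_F}), and then run the standard maximizing-sequence argument, using the pressure inequality itself to force $\|\mu\|=1$. Your separate handling of the branch $\|\mu\|=0$ via Corollary~\ref{cor:leq_delta_P} is the right completion of a detail the paper glosses over.
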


\begin{remark} Without the H\"older assumption we can not identify the pressure of $F$ with the critical exponent of its Poincar\'e series and we can not ensure the uniqueness of the equilibrium state. It is an interesting question whether or not this weaker regularity assumption might allow to have more than one equilibrium state.
\end{remark}

\addcontentsline{toc}{chapter}{References}
\bibliography{biblio}
\bibliographystyle{amsalpha}

\end{document}